\def \k {K\"ahler }
\def \ke {K\"ahler-Einstein }
\newcommand{\sm}{\setminus}
\newcommand{\ol}{\overline}
\newcommand{\Ol}{\overline}
\newcommand{\Om}{\Omega}
\newcommand{\om}{\omega}
\newcommand{\e}{\epsilon}
\newcommand{\p}{\partial}
\newcommand{\mc}{\mathcal}
\newcommand{\w}{\widetilde}
\newcommand{\al}{\alpha}
\newcommand{\be}{\beta}
\newcommand{\la}{\lambda}
\newcommand{\cal}{\mathcal}
\newtheorem{theorem}{Theorem}[section]
\newtheorem*{theorem*}{Theorem}
\newtheorem{corollary}[theorem]{Corollary}
\newtheorem{lemma}[theorem]{Lemma}
\newtheorem{proposition}[theorem]{Proposition}
\newtheorem{example}[theorem]{Example}
\newtheorem{remark}[theorem]{Remark}
\newtheorem{definition}[theorem]{Definition}
\newtheorem*{definition*}{Definition}
\numberwithin{equation}{section}
\def\e{\epsilon}
\def\wt{\widetilde}
\begin{document}

\title[]{A new Poincar\'e type rigidity phenomenon with applications}
%\author{ Ming Xiao
%\footnote{Supported in part by NSF grant DMS-1800549 and DMS-2045104}}

%\address{%
% Department of Mathematics}
%
%\email{luking@zjhu.edu.cn}

%\thanks{This work was}

%----------Author 2
%
%\author{ }

%\subjclass[2000]{Primary 47B38; 47G10; Secondary 32A36}
%\keywords{Toeplitz operator; Hankel operator; Fock space; essential
%norm}

\begin{abstract}
We discover a new Poincar\'e type phenomenon by establishing an optimal 
rigidity theorem for local CR mappings between circle bundles that are defined in a canonical way over (possibly reducible) bounded symmetric domains.
We prove such a local CR map, if nonconstant, must extend to a rational biholomorphism between the corresponding disk bundles.
The result includes as a special case the classical Poincar\'e--Tanaka--Alexander theorem. 
Among other applications, we show, for two irreducible bounded symmetric domains with rank at least two, a local CR diffeomorphism between (open connected pieces of) their anti-canonical circle bundles extends to a norm-preserving holomorphic isomorphism between  their anti-canonical bundles. The statement fails in the rank one case.
As another application, we construct, for any $n \geq 2,$ a countably infinite family of compact locally homogeneous strongly pseudoconvex CR hypersurfaces (in complex manifolds) of real dimension $2n+1$ with transverse symmetry such that they are all obstruction flat and Bergman logarithmically flat. Moreover, their local CR structures are mutually inequivalent. Such a family cannot exist in  dimension three by known results: A Bergman logarithmically flat  CR hypersurface must be spherical, and so is a compact obstruction flat CR hypersurface with transverse symmetry. 
\end{abstract}

\subjclass[2020]{32H02 32M15 32V15}

%\date{\today}

\author{Ming Xiao}
\address{Department of Mathematics, University of California at San Diego, La Jolla, CA 92093, USA}
\email{{m3xiao@ucsd.edu}}

%\medskip

%\noindent
%{\bf Key Words}: Circle and disk bundles; bounded symmetric domains; CR mappings;  holomorphic mappings; rational biholomorphisms; line bundle isomorphisms.

\thanks{The author was supported in part by the NSF grants DMS-1800549 and DMS-2045104.}

\maketitle

\section{Introduction}

%\subsection{Background, main theorem and some applications}
Write $\mathbb{B}^m=\{z \in \mathbb{C}^m: \|z \|<1 \}$ for the $m-$dimensional complex unit ball, and $\partial \mathbb{B}^m$ for the unit sphere in $\mathbb{C}^m$. Poincar\'e \cite{Po} proved the following theorem for $n=1$, and Tanaka \cite{Ta}  extended it to higher dimensions (the theorem was later re-discovered  by Alexander \cite{Al}).

\smallskip

{\bf Theorem 1 } {\it Let $U$ be a connected open subset of $\mathbb{C}^{n+1}, n \geq 1,$ with $U \cap \partial \mathbb{B}^{n+1} \neq \emptyset.$ Let $F$ be a nonconstant holomorphic map from $U$ to $\mathbb{C}^{n+1}.$ If $F$ maps $U \cap \partial \mathbb{B}^{n+1}$ to $\partial \mathbb{B}^{n+1},$ then $F$ must extend to an automorphism of $\mathbb{B}^{n+1}$(in particular, $F$ is linear fractional).}
%and $F$ is isometric with respect to the standard hyperbolic metric of $\mathbb{B}^{n+1}$.}

\smallskip

This work is among the most celebrated classical theorems of several complex variables, and can be regarded as the CR geometric analogue of the classical Liouville's rigidity theorem on conformal maps between Euclidean spaces.  It is the starting point from which many far-reaching research directions stem. Since the work of Poincar\'e--Tanaka--Alexander, many deep results have been established in the investigation of extendability,  algebraicity, rigidity, and many other related problems of local holomorphic and CR maps. %between CR hypersurfaces. The investigation on these problems is also ultimately related to the study of proper holomorphic maps between complex domains as well.
 
It seems impossible to give a comprehensive survey on all these research topics here. %We just refer the interested readers to the papers   [We], [BR], [Be], [BER], [Fr1-2], [Hu1-2], [HM],  [Za], [KZ1-2], [M], [BX], [LM] and many references therein. The list here is by no mean to be complete. 
In this paper, we concentrate on one of the research directions: the study of {\it the Poincar\'e type phenomenon}. In CR geometry, the latter term refers to the occurrence when a local holomorphic or CR map between the boundaries (or some strata of the boundaries) of two domains extends to a global holomorphic map between the domains, and often the resulted global map possesses some rigidity (e.g., biholomorphism, isometry) properties. Since the work of Poincar\'e, Tanaka and Alexander, numerous Poincar\'e type phenomenons have been discovered, especially for the complex balls and bounded symmetric domains (the source and target domains can even be different dimensional).  See many deep results along this line in \cite{W2,TK1, TK2, Fr1, Fr2, Hu2, HJ2, KZ1, KZ2, MN, BH, BEH, MZ} and many references therein. For example,
Tumanov--Khenkin \cite{TK1, TK2} and Mok--Ng \cite{MN} generalized Theorem 1 to any irreducible bounded symmetric domain $\Om$ of higher rank, with $\partial \mathbb{B}^{n+1}$ replaced by the Shilov boundary and the smooth boundary of $\Om$, respectively.

In this article, we discover a new Poincar\'e type rigidity phenomenon which extends Theorem 1 via a different viewpoint from the above literature. To explain our ideas, we first observe $\mathbb{B}^{n+1}$ can be regarded as a disk bundle over the lower dimensional ball $\mathbb{B}^{n}.$ More precisely, let $L=\mathbb{B}^{n} \times \mathbb{C}$ be the trivial line bundle over $\mathbb{B}^{n}$ (recall a holomorphic line bundle over a contractible Stein manifold is always trivial). We equip $L$ with the Hermitian metric $h(z, \overline{z})=(1-\|z\|^2)^{-1}$ where $z \in \mathbb{B}^{n}.$ Then $(L, h)$ is a negative line bundle and the negative of its Chern class $-c_1(L, h)$ induces the standard hyperbolic metric on $\mathbb{B}^{n}$. Moreover, in this way, $\mathbb{B}^{n+1}$ and (an open dense subset of) $\partial \mathbb{B}^{n+1}$ become the associated disk bundle $D(L,h)$ and circle bundle $C(L,h),$  respectively. Theorem 1 can now be formulated as follows: {\it A nonconstant local holomorphic map sending an open piece of $C(L,h)$ to $C(L,h)$ must extend to an automorphism of the disk bundle $D(L,h)$.} We will extend this version of Theorem 1 to an optimal general result for circle and disk bundles over bounded symmetric domains (to be defined in Definition \ref{D2}). As will be shown later, these circle bundles  possess important and distinguished CR geometric features, and it is of particular interest to develop CR function theory on them.
%carry out further study on their CR structures. 
%They are important models for non-spherical  obstruction flat and Bergman logarithmically flat strongly pseudoconvex hypersurfaces (to be explained in). 

To formulate our results, we pause to recall the notion of bounded symmetric domains. A Hermitian symmetric space of noncompact type can be realized as a convex and circular bounded domain containing $0$ in some complex Euclidean space, via the Harish-Chandra realization, equipped with the (possibly normalized) Bergman metric. In this paper, the term bounded symmetric domains will always refer to such realizations of Hermitian symmetric spaces of noncompact type.  A bounded symmetric domain is called irreducible if it cannot be written as the product of two of such domains. %Irreducible bounded symmetric domains can be classified into Cartan's four types of classical domains and two exceptional domains. 
The unit ball and the polydisc are the most basic models for irreducible and reducible bounded symmetric domains, respectively. The rank of a bounded symmetric domain $\Om$  can be defined as the dimension of the maximal polydisc that can be totally geodesically embedded into $\Om.$ Recall $\Om$ has rank one if and only if $\Om$ is biholomorphic to the unit ball. We also remark that $\Om$ has smooth boundary if and only if $\Om$ has rank one.

We next recall the notion of generic norms. Let $\Om$ be  an irreducible bounded symmetric domain and denote by $K(z, \overline{z})$ its Bergman kernel. Write $V_{\Om}$ for the volume of $\Om$ in the Euclidean measure, and $\gamma$ for the genus of $\Om$ (which is a positive integer associated to $\Om$, cf. \cite{Lo}). Then %there is a (unique) Hermitian polynomial $N(z, \overline{z})$ satisfying 
$(V_{\Om} K(z, \Ol{z}))^{-\frac{1}{\gamma}}$ gives a Hermitian polynomial, denoted by $N(z, \overline{z})$. The polynomial $N$ is called the generic norm of $\Om$ (see \cite{M1}, \cite{Lo} for more details). 
%It turns out that the polynomial $N(z, w) \in \mathbb{C}[z, w]$ is irreducible. 
It  satisfies that $0 < N(z, \overline{z}) \leq 1$ in $\Om$, and $N(z, \overline{z})=1$ if and only if $z=0$. Moreover, $N(z, \overline{z})=0$ on the boundary $\partial\Om$. Writing $N(z, w) \in \mathbb{C}[z, w]$ for the complexification of $N(z, \overline{z}),$ we have $N(\cdot, 0)=N(0, \cdot) \equiv 1.$ %Writing $K \subseteq \mathrm{Aut}_0(\Om)$ for the isotropy subgroup of $\Om$ at $0 \in \Om,$  then $N$ is invariant under the action of $K: N(z, \overline{z})=N(\phi(z), \overline{\phi(z)})$ for every $\phi \in K.$     
For example, the unit ball $\mathbb{B}^{m}$ has genus $m+1.$ By inspecting its Bergman kernel $K_{\mathbb{B}^{m}}=\frac{m!}{\pi^m} \frac{1}{(1-|z|^2)^{m+1}}$, we easily see the generic norm of $\mathbb{B}^{m}$ is $N(z, \overline{z})=1-\|z\|^2$ with $z \in \mathbb{C}^{m}$.
To give more examples,  we write ${M} (p,q;\mathbb{C})$ for the set of $p \times q$ complex
matrices. Recall the type I classical domain $D^I_{p,q}$ (which is a special type of bounded symmetric domain) is a domain in ${M} (p,q;\mathbb{C})$ consisting of complex matrices $Z$ such that $I_p-Z \overline{Z}^t$ is positive definite. The generic norm of $D^I_{p,q}$
equals the determinant of $I_p-Z \overline{Z}^t$. See \cite{Lo} for more details and formulas for generic norms in general cases. 

While the usual notion of generic norms seems to be only defined for irreducible cases, we extend it to the general case of possibly reducible bounded symmetric domains. Denote by $\mathbb{N}$ the set of positive integers $\{1, 2, 3, \cdots\}.$

\begin{definition}\label{D1}
Let  $\Omega=\Omega_1 \times \cdots \times \Omega_l$  be a bounded symmetric domain in $\mathbb{C}^n,$ where each $\Omega_i, 1 \leq i \leq l,$ is an irreducible bounded symmetric domain in $\mathbb{C}^{n_i}_{z_i}$ with $n=\sum_{i=1}^l n_i.$ Denote by $N_i(z_i, \Ol{z_i}), 1 \leq i \leq l,$ the generic norm of $\Omega_i.$ Write $z=(z_1, \cdots, z_l)$ for the coordinates in $\mathbb{C}^n.$  A polynomial $\rho(z, \Ol{z})$ is called a generalized generic norm of $\Omega$ if $\rho$ can be written as follows:
$$\rho(z, \Ol{z})=\prod_{i=1}^l \big(N_i(z_i, \Ol{z_i})\big)^{k_i}~\text{with all}~k_i \in \mathbb{N}.$$
If in addition there is some $1 \leq i_0 \leq l$ such that $k_{i_0}=1,$ then we further say $\rho$ is a generic norm of $\Omega.$
\end{definition}

%A classical result of Poincare says that Alexdandar.

According to the above definition, the generic norm, as well as the generalized generic norm, of a given bounded symmetric domain $\Omega$ is not unique in general. However, when $\Omega$ is irreducible, its generic norm is unique and coincides with the usual notion.
Given a bounded symmetric domain $\Omega$ and a generalized  generic norm $\rho$ of $\Omega,$ let $L=\Omega \times \mathbb{C}$ be the
trivial line bundle over $\Omega$.  Equip $L$ with the Hermitian metric $h=\rho^{-1}.$ Then $(L, h)$ is a negative line bundle and $-c_1(L, h)$ induces a complete K\"ahler metric on $\Omega$ given by
\begin{equation}\label{eqnmeo}
\omega_{\Omega}=\sqrt{-1} \partial \overline{\partial} \log h=- \sqrt{-1} \partial \overline{\partial} \log \rho.
\end{equation}
Note when $\Omega$ is irreducible, $\omega_{\Omega}$ is proportional to the Bergman metric and thus K\"ahler-Einstein; when $\Omega$ is reducible, $\omega_{\Omega}$ is a product of complete \ke metrics. We also make the following definition.

\begin{definition}\label{D2}
Let $\Omega$ be a (possibly reducible) bounded symmetric domain in $\mathbb{C}^n_z$ and $\rho$ a generalized  generic norm of $\Omega.$
Denote by $L$ the trivial line bundle $\Omega \times \mathbb{C}$ over $\Omega$, equipped with the Hermitian metric $h=\rho^{-1}$. 
The disk and circle bundle of $(L, h)$, as defined in (\ref{eqndlh}) and (\ref{eqnclh}), respectively, will be called the disk and circle  bundle over $\Om$ associated to $\rho,$ respectively. They will be denoted by $D(\Om, \rho)$ and $\Sigma(\Om, \rho)$ throughout the paper.  %The  circle  bundles of $(L, h)$, as defined in , will be called the circle bundle over $\Om$ associated to $\rho,$ and will be denoted by $C(\Om, \rho)$ throughout the paper.
\begin{equation}\label{eqndlh}
D(\Om, \rho):=\{(z, \xi) \in \Omega \times \mathbb{C}: |\xi|^2 < \rho(z, \overline{z}) \} \subseteq \mathbb{C}^{n+1};
\end{equation}
\begin{equation}\label{eqnclh}
\Sigma(\Om, \rho):=\{(z, \xi) \in \Omega \times \mathbb{C}: |\xi|^2 = \rho(z, \overline{z}) \} \subseteq \mathbb{C}^{n+1}.
\end{equation}
We will call $r(z, \xi, \Ol{z}, \Ol{\xi}):=\rho(z, \overline{z})-|\xi|^2$ the generalized generic norm of $D(\Omega, \rho)$, and say it is the generic norm of $D(\Omega, \rho)$ if
$\rho$ is a generic norm of $\Om$.
\end{definition}

The disk bundle $D(\Om, \rho)$ is a bounded pseudoconvex domain in $\mathbb{C}^{n+1}$. It  is of particular geometric interest as a model for nonhomogeneous and noncompact complex manifolds which still possess a high degree of symmetry. It is sometimes referred to as the {\it Cartan--Hartogs domain}, especially when  $\Omega$ is irreducible. %The terminology Cartan-Hartogs domain was first introduced by Roos-Yin \cite{RY}. 
The interior geometry, e.g., the \k geometry, of Cartan--Hartogs domains has been extensively studied (cf. \cite{WYZR}, \cite{BT1}, \cite{MZe} and many references therein). Researchers also worked on the Bergman kernel and related questions of $D(\Om, \rho)$.
In particular, numerous authors contributed to finding explicit formulas for the Bergman kernels of Cartan--Hartogs domains and other closely related domains. See  Anh--Park \cite{AP} and many references therein.

On the other hand, much less attention has been paid to the CR geometry of the circle bundle $\Sigma(\Om, \rho)$, which will be a primary interest of this paper. %After introducing our main result(Theorem \ref{T1}), 
We first collect some simple facts about the CR structure of $\Sigma(\Om, \rho)$. It is clear that $\Sigma(\Om, \rho)$ is a real algebraic smooth CR hypersurface in $\mathbb{C}^{n+1}$. Since $(\Om, \omega_{\Omega})$ is a homogeneous \k manifold, it follows that $\Sigma(\Om, \rho)$ is a homogeneous CR hypersurface. Since $(L, h)$ is a negative Hermitian line bundle, by a well-known observation of Grauert, $\Sigma(\Om, \rho)$ is strongly pseudoconvex. 
Moreover, as $\omega_{\Omega}$ has constant scalar curvature, by the work of Webster \cite{W0} and Bryant \cite{Br} (cf. Proposition 1.12 in \cite{EXX2} and references therein), $\Sigma(\Om, \rho)$ is spherical if and only if $\Om$ is biholomorphic to the unit ball. Here recall a CR hypersurface is called {\em spherical} if it is locally CR diffeomorphic to an open piece of the sphere in a neighborhood of any point.

The CR hypersurface $\Sigma(\Om, \rho)$ has more critical CR geometric features.  After introducing our main theorem and its corollaries, we will recall the notions of obstruction flatness and Bergman logarithmic flatness for strongly pseudoconvex CR hypersurfaces. The importance of the two notions lies in their intimate connections with the boundary regularity of the Cheng-Yau potential of \ke metric, as well as the asymptotic behavior of the Bergman kernel, on a strongly pseudoconvex domain. We will prove that $\Sigma(\Om, \rho)$ is flat in both senses (cf. Proposition \ref{prpnflat}). The CR hypersurfaces $\Sigma(\Om, \rho)$ indeed provide the simplest possible models of obstruction flat and Bergman logarithmically flat CR hypersurfaces that are non-spherical. Therefore it is of particular interest to develop CR function theory on these hypersurfaces.

We stress that  $\Sigma(\Om, \rho)$ is only an open and dense proper subset of the topological boundary of $D(\Om, \rho)$ in $\mathbb{C}^{n+1}$. In general, $D(\Om, \rho)$ has non-smooth boundary points, and can also have weakly pseudoconvex boundary points in $\mathbb{C}^{n+1}$. On the other hand, although $\Sigma(\Om, \rho)$ is not even closed in $\mathbb{C}^{n+1}$, it has a compact realization as the boundary of some domain in a (non-Stein) complex manifold.  More precisely, let  $X=X_1 \times \cdots \times X_l$ be a compact quotient of the bounded symmetric domain $\Omega=\Om_1 \times \cdots \times \Om_l$ (where $\Om_j$ covers $X_j$, respectively). By taking tensor product of appropriate powers of the anti-canonical bundles over $X_j$'s (cf. the proof of Theorem \ref{corcis}), we can obtain a line bundle over $X$ whose circle bundle (which is clearly compact) locally CR equivalent to $\Sigma(\Om, \rho)$. To present our main theorem, we recall the notion of a norm-preserving isomorphism between Hermitian line bundles.

\begin{definition}
Let $(L_i, h_i)$ be a Hermitian line bundle over a complex manifold $M_i, 1 \leq i \leq 2.$ Write $\pi_i: L_i \rightarrow M_i$ for the natural fiber projection.  A map $G: L_1 \rightarrow L_2$ is called a norm-preserving holomorphic isomorphism between line bundles $L_1$ and $L_2$, if $G$ is a biholomorphism and satisfies the following conditions:

(1) $G$ preserves the norms in the sense that $|G(v)|_{h_2}=|v|_{h_1}$ for every $v \in L_1.$ Consequently,  the restriction of $G$ to the zero section $M_1 \cong M_1 \times \{0\}$, denoted by $g$, is a biholomorphism from $M_1$ to $M_2$.

(2) $G$ preserves the fiber structures. More precisely, $G(\pi_1^{-1}(z))=\pi_2^{-1}(g(z))$, and $G$ induces a linear isomorphism from $\pi_1^{-1}(z)$ to $\pi_2^{-1}(g(z)),$  for every $z \in M_1.$

\end{definition}

We now introduce our main theorem and then make a few remarks.

\begin{theorem}\label{T1}
Let $\Omega_1$ and $\Omega_2$ be two (possibly reducible) bounded symmetric domains in $\mathbb{C}^n$ with generic norms  $\rho_1$ and $\rho_2$, respectively. Denote by  $\om_{\Om_i}$ the \k metric $- \sqrt{-1} \partial \overline{\partial} \log \rho_i$ on $\Om_i$ for $1 \leq i \leq 2$. %Write $D_1=D(\Omega_1, \rho_1), D_2=D(\Omega_2, \rho_2)$ and $\Sigma_1=C(\Omega_1, \rho_1), \Sigma_2=C(\Omega_2, \rho_2)$, which are defined as in Definition \ref{D1}. 
%Let $\omega_{\Omega_1}, \omega_{\Omega_2}$ and $\omega_{D_1}, \omega_{D_2}$ be the naturally induced K\"ahler metrics on $\Omega_1, \Omega_2$ and $D_1, D_2$, respectively as above.	
Let $F$ be a nonconstant continuous CR map from an open connected piece of $\Sigma(\Omega_1, \rho_1) \subseteq \mathbb{C}^{n+1}$ to $\Sigma(\Omega_2, \rho_2) \subseteq \mathbb{C}^{n+1}$. Then the following statements hold.

(1) The map $F$ extends to a rational biholomorphism, still denoted by $F$, from $D(\Omega_1, \rho_1)$ to $D(\Omega_2, \rho_2)$. 
%Furthermore, $F$ is isometric in the sense that $F^*(\omega_{D_2})=\omega_{D_1}.$

(2) The domains $(\Omega_1, \omega_{\Omega_1})$ and $(\Omega_2, \omega_{\Omega_2})$ are holomorphically isometric. 
%In particular, $\Omega_1$ and $\Omega_2$ have the same rank.

(3) If in addition  $\mathrm{rank}(\Omega_1) \geq 2$ or $\mathrm{rank}(\Omega_2) \geq 2$,  %$\mathrm{rank}(\Omega_1) \geq 2$ or $\mathrm{rank}(\Omega_2) \geq 2,$ 
then $F$ extends to a norm-preserving rational holomorphic isomorphism between the line bundles  $L_1=\Omega_1 \times \mathbb{C}$  and $L_2=\Omega_2 \times \mathbb{C}$ with respect to the Hermitian metrics $h_1=\rho_1^{-1}$ and $h_2=\rho_2^{-1}.$ 
\end{theorem}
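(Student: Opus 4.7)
My plan is to (i) extend $F$ to a rational biholomorphism between the two disk bundles by combining CR extension theorems with the algebraic reflection principle, (ii) read off the isometry of the base metrics from the resulting identity between defining functions, and (iii) invoke higher-rank rigidity to force fiber-preservation in the last part.

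For part (1), write $F=(f,g)$ with $f\colon D_1\to\ol{\Om_2}$ and $g$ scalar. On the open piece of $\Sigma_1$ where $F$ is defined, $\rho_2(f,\bar f)=|g|^2$. Strong pseudoconvexity of $\Sigma_1$ and Lewy's CR extension theorem give a holomorphic extension of $F$ to a one-sided neighborhood inside $D(\Om_1,\rho_1)$. Because both $\Sigma_1$ and $\Sigma_2$ are real algebraic strongly pseudoconvex hypersurfaces, the algebraic reflection principle (Webster's algebraicity theorem, sharpened by Huang and others) extends $F$ holomorphically across $\Sigma_1$ as an algebraic map, and the explicit fibered structure of the disk bundles together with boundedness of $D_2$ forces this extension to be single-valued and rational. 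This yields the global identity
\begin{equation*}
\rho_2\bigl(f(z,\xi),\overline{f(z,\xi)}\bigr) - |g(z,\xi)|^2 \;=\; \lambda(z,\xi,\bar z,\bar\xi)\bigl(\rho_1(z,\bar z) - |\xi|^2\bigr),
\end{equation*}
with $\lambda$ real analytic and strictly positive near $\Sigma_1$. Applying the same reflection argument to a local holomorphic inverse of $F$ (which exists near $F(\Sigma_1)$ by strong pseudoconvexity and non-constancy) then yields that $F$ extends to a rational biholomorphism $D(\Om_1,\rho_1)\to D(\Om_2,\rho_2)$.

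For part (2), apply $-\sqrt{-1}\,\partial\bar\partial\log$ to both sides of the identity above to obtain
\begin{equation*}
F^*\omega_{D_2} \;=\; \omega_{D_1} - \sqrt{-1}\,\partial\bar\partial\log\lambda,
\end{equation*}
where $\omega_{D_i}:=-\sqrt{-1}\,\partial\bar\partial\log(\rho_i-|\xi|^2)$ is the natural complete \k metric on $D_i$. The $S^1$ fiber-rotation symmetry on both sides together with rationality of $\lambda$ forces $\log\lambda$ to be pluriharmonic, hence $\lambda=|H|^2$ for some rational holomorphic $H$. Restricting the identity to the zero section $\{\xi=0\}$, where the normalization $\rho_i(z,0)\equiv1$ ensures that $\rho_i$ is precisely a \k potential for $\omega_{\Om_i}$, then produces a biholomorphic isometry $(\Om_1,\omega_{\Om_1})\to(\Om_2,\omega_{\Om_2})$.

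For part (3), assume $\mathrm{rank}(\Om_1)\geq 2$. The factorization of the generic norm $\rho_1$ reflects both the product decomposition of $\Om_1$ into irreducible factors and the embedded polydisc of rank at least two; these structures give rise to distinguished complex submanifolds of $\Sigma_1$ and $D_1$, for example the product disk subfibrations over the maximal polydisc, whose existence is intrinsically encoded in the Levi form of $\Sigma_1$ together with its signature and degeneracy directions. The rational biholomorphism $F$ must carry these intrinsic substructures to the corresponding ones on $\Sigma_2$, and analyzing $F$ along the maximal polydisc forces $F$ to preserve the disk fibration $D_1\to\Om_1$, so that $F(z,\xi)=(\phi(z),a(z)\xi)$ for some biholomorphism $\phi\colon\Om_1\to\Om_2$ and nowhere-vanishing holomorphic $a$. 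The norm-preserving relation $\rho_2(\phi(z),\overline{\phi(z)})=|a(z)|^2\rho_1(z,\bar z)$ then drops out of the defining-function identity restricted to $\{\xi=0\}$. I expect the principal obstacle to lie in this last rigidity step: recognizing the fiber structure $D_i\to\Om_i$ intrinsically from CR-invariants of $\Sigma_i$ under the rank-two hypothesis, without falling back on case-by-case computations. The sharpness of the hypothesis is witnessed by the rank one situation, in which $D(\BB^n,1-\|z\|^2)\cong\BB^{n+1}$ and the full automorphism group $SU(n+1,1)$ mixes base and fiber coordinates freely.
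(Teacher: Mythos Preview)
Your proposal has a substantial gap at the most delicate point: the passage from algebraicity to rationality of $F$. You write that ``the explicit fibered structure of the disk bundles together with boundedness of $D_2$ forces this extension to be single-valued and rational,'' but this is exactly where the real work lies. Webster's theorem only gives that $F$ is algebraic; single-valuedness of the analytic continuation requires a nontrivial monodromy argument. The paper devotes all of \S3.1 to this: one analyzes loops in the complement of the branch locus $\mathcal{S}$, lifts them to the complexification $\mathcal{M}_1$ of $\Sigma_1$, and uses the Segre-variety characterization (Proposition~\ref{prpnzsh}(b)) to conclude that continuation along such loops returns the same germ. A key input is Lemma~\ref{lmstom}, which produces smooth \emph{weakly} pseudoconvex minimal boundary points of $D_1$ on $\partial\Omega_1\times\{0\}$; this is precisely where the hypothesis that $\rho_1$ is a \emph{generic} norm (some exponent $k_{i^*}=1$) rather than merely a generalized generic norm enters. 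Without it, Examples~\ref{e22} and~\ref{e23} show $F$ can genuinely be multi-valued and irrational. Your argument never invokes this hypothesis, so it cannot be correct as stated.

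There is also a logical ordering problem in your treatment of parts (2) and (3). In part (2) you ``restrict to the zero section $\{\xi=0\}$'' to read off the base isometry, but $F$ is not yet known to preserve $\Omega_1\times\{0\}$; that is the content of part (3). The paper proceeds in the opposite order: first it establishes the identity $r_2(F(Z),\overline{F(Z)})=|R_3(Z)|^2\,r_1(Z,\bar Z)$ with $R_3$ rational holomorphic (via complexification and Lemma~\ref{lmrzx}, not via an $S^1$-symmetry argument, which would presuppose fiber-preservation), deduces $F^*\omega_{D_2}=\omega_{D_1}$, and then uses completeness of $\omega_{D_i}$ and the classical Riemannian extension theorem to globalize $F$ and $F^{-1}$. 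Only then, in Lemma~\ref{lmf0}, is fiber-preservation proved, and the mechanism is again the weakly pseudoconvex boundary points: since $F$ is biholomorphic near such a point $p^*\in\partial\Omega_1\times\{0\}$, it cannot send $p^*$ into the strongly pseudoconvex part $\Sigma_2$, forcing $F(\partial\Omega_1\times\{0\})\subset\partial\Omega_2\times\{0\}$. Your proposed route via ``Levi form signature and degeneracy directions along maximal polydiscs'' is not the argument the paper uses and would need substantial development to be made rigorous.
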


\begin{remark}\label{rmkform}
Indeed under the assumption of part (3), $F$ always takes the form: $F(z, \xi)=(\psi(z),\xi \phi(z))$, where $\psi$ is a biholomorphic map from $\Om_1$ to $\Om_2$, and $\phi$ is an everywhere nonzero holomorphic function on $\Om_1$ satisfying $\rho_2(\psi(z), \Ol{\psi(z)})=|\phi(z)|^2 \rho_1(z, \Ol{z}).$ In particular, the restriction of $F$ to $\Omega_1 \times \{0\},$ which is $\psi,$ induces a holomorphic isometry from $(\Omega_1, \omega_{\Omega_1})$ to $(\Omega_2, \omega_{\Omega_2})$. This can be seen from the proof of Theorem \ref{T1}.
\end{remark}

\begin{remark}\label{R1}
By a theorem of Pinchuk-Tsyganov \cite{PT}, a continuous CR map between two real analytic strongly pseudoconvex hypersurfaces $M$ and $M'$ in $\mathbb{C}^m, m \geq 2, $ extends holomorphically to a neighborhood of $M$. Therefore, by this result,  in Theorem \ref{T1} it is equivalent to assume that $F$ is a nonconstant holomorphic map from an open connected subset $U \subseteq \mathbb{C}^{n+1}$  to $\mathbb{C}^{n+1}$ satisfying $M:=U \cap \Sigma(\Omega_1, \rho_1) \neq \emptyset$ and  $F(M) \subseteq \Sigma(\Om_2, \rho_2).$ %Here $\Sigma_i=\Sigma(\Omega_i, \rho_i), 1 \leq i \leq 2. $ 
\end{remark}

\begin{remark}\label{R2}
Theorem \ref{T1} is optimal in the following sense. If either $\rho_1$ or $\rho_2$ is merely a generalized generic norm (instead of a generic norm. See Definition \ref{D1}), then the conclusion fails: the map $F$ can be irrational and has no holomorphic extension to $D(\Omega_1, \rho_1)$ (see Examples \ref{e22} and \ref{e23});  $F$ can also be rational while not biholomorphic on $D(\Omega_1, \rho_1)$ (see Example \ref{e21}).
\end{remark}

\begin{remark}
In the setting of Theorem \ref{T1}, one can apply the deep work of Webster \cite{W0} and Bryant \cite{Br} (cf. Proposition 1.12 in \cite{EXX2}) to see that $\mathrm{rank}(\Om_1)=1$ if and only $\mathrm{rank}(\Om_2)=1.$ We will, however, not use their results and give a more self-contained proof of Theorem \ref{T1}. %without using their results.
\end{remark}

%\begin{remark}\label{R3}
Note part (1) of Theorem \ref{T1} implies Theorem 1 as a special case of $\Omega_1=\Omega_2=\mathbb{B}^n.$ We stress that, in Theorem \ref{T1} we do not assume a priori that $\Omega_1=\Omega_2$, but instead derive as a conclusion in part (2) that they are biholomorphic. We also remark, unlike Theorem 1, one cannot expect the map $F$ in Theorem \ref{T1} to be linear fractional in general. %even if $\Omega_1=\Omega_2$ and $\rho_1=\rho_2$. 
See Examples \ref{e25} and \ref{e26}. 
%\end{remark}
Besides the unit ball $\mathbb{B}^n,$ the other basic model of bounded symmetric domains is the polydisc $\Delta^n=\{z \in \mathbb{C}^n: |z_i|< 1\}$. While Theorem \ref{T1} in the unit ball case gives precisely Theorem 1, it is interesting to check its application in the polydisc case. We record it below, which seems already a somewhat surprising statement.

\begin{corollary}\label{corpdk}
Let $n \geq 2$ and $(p_1, \cdots, p_n), (q_1, \cdots, q_n)$ be two $n$-tuples of positive integers. %where at least one $p_i$ and at least one $q_j$ equal to $1$. 
Write $z=(z_1,  \cdots, z_{n}) \in \mathbb{C}^{n}$ and set
$$\rho_1(z, \overline{z})=\prod_{i=1}^n (1-|z_i|^2)^{p_i}; \quad \rho_2(z, \overline{z})=\prod_{i=1}^n (1-|z_i|^2)^{q_i}.$$
Define  $\Sigma_i=\{(z, \xi) \in   \Delta^n \times \mathbb{C}: |\xi|^2=\rho_i(z, \overline{z}) \}$ for $1 \leq i \leq 2$. Assume at least one $p_i$ and at least one $q_i$ equal to $1$.  Let $F$ be a nonconstant continuous CR map from an open connected piece of $\Sigma_1$ to $\Sigma_2$.  Then after a permutation,  $(p_1, \cdots, p_n)$ and $(q_1, \cdots, q_n)$ are identical. Moreover, $F$ extends to a rational norm-preserving holomorphic isomorphism between the Hermitian line bundles $( \Delta^n \times \mathbb{C}, h_1)$ and $(\Delta^n \times \mathbb{C}, h_2)$, where $h_1=\rho_1^{-1}$ and $h_2=\rho_2^{-1}$.
%In particular, $F$ induces a biholomorphism from $D_1$ to $D_2$. %In addition, $F$ is isometric in the sense that $F^*(\omega_{D_2})=\omega_{D_1}.$
\end{corollary}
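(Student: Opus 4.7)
My plan is to reduce everything to Theorem \ref{T1} and then exploit the rigid structure of $\mathrm{Aut}(\Delta^n)$. The first step is to check that $\rho_1$ and $\rho_2$ are genuine generic norms (not merely generalized ones) in the sense of Definition \ref{D1}: each factor of $\Delta^n = \Delta \times \cdots \times \Delta$ is a disk with generic norm $1 - |w|^2$, and the hypothesis that at least one $p_i$ and at least one $q_i$ equals $1$ is exactly what is required. Since $\mathrm{rank}(\Delta^n) = n \geq 2$, part (3) of Theorem \ref{T1} applies in full force, so $F$ extends to a rational norm-preserving holomorphic isomorphism between the Hermitian line bundles. By Remark \ref{rmkform}, the extension has the form $F(z, \xi) = (\psi(z), \xi \phi(z))$ with $\psi \in \mathrm{Aut}(\Delta^n)$ and $\phi$ a nowhere-vanishing holomorphic function on $\Delta^n$ satisfying
$$\rho_2\bigl(\psi(z), \overline{\psi(z)}\bigr) = |\phi(z)|^2 \, \rho_1(z, \bar z).$$

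The remaining task is to identify $(p_i)$ with $(q_i)$ up to permutation. Here I would invoke the classical description $\mathrm{Aut}(\Delta^n) = \mathrm{Aut}(\Delta)^n \rtimes S_n$ to write $\psi(z) = (\tau_1(z_{\sigma(1)}), \dots, \tau_n(z_{\sigma(n)}))$ for some $\sigma \in S_n$ and Möbius transformations $\tau_i \in \mathrm{Aut}(\Delta)$ with associated base points $a_i \in \Delta$. Using the standard identity
$$1 - |\tau_i(w)|^2 = \frac{(1 - |a_i|^2)(1 - |w|^2)}{|1 - \ol{a_i} w|^2},$$
a direct computation yields
$$\rho_2\bigl(\psi(z), \overline{\psi(z)}\bigr) = \frac{C}{|H(z)|^2} \prod_{j=1}^n (1 - |z_j|^2)^{q_{\sigma^{-1}(j)}},$$
where $C>0$ is a constant and $H(z) := \prod_i (1 - \ol{a_i} z_{\sigma(i)})^{q_i}$ is holomorphic and nonvanishing on $\Delta^n$.

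Substituting into the identity $\rho_2(\psi, \ol{\psi}) = |\phi|^2 \rho_1$ and rearranging produces
$$\prod_{j=1}^n (1 - |z_j|^2)^{p_j - q_{\sigma^{-1}(j)}} = \frac{C}{|\phi(z) H(z)|^2}.$$
Taking logarithms of both sides and applying $\partial_{z_k} \ol{\partial_{z_k}}$ annihilates the right-hand side (since $\phi H$ is holomorphic and nowhere zero) while extracting the coefficient $-(p_k - q_{\sigma^{-1}(k)})(1 - |z_k|^2)^{-2}$ from the left, forcing $p_k = q_{\sigma^{-1}(k)}$ for every $k$. Thus $(p_i)$ and $(q_i)$ coincide after the permutation $\sigma^{-1}$, and the norm-preserving rational isomorphism statement is already provided by Theorem \ref{T1}(3), so the corollary follows. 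I anticipate no serious obstacle: Theorem \ref{T1} supplies all the essential rigidity, and the matching of exponents is a short pluriharmonicity argument once the product structure of $\Delta^n$ and its automorphism group is invoked.
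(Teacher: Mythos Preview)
Your proof is correct. Both your argument and the paper's reduce immediately to Theorem \ref{T1}; they diverge only in how the exponents $(p_i)$ and $(q_i)$ are matched. The paper uses part (2) of Theorem \ref{T1} to conclude that $(\Delta^n,\om_1)$ and $(\Delta^n,\om_2)$ are holomorphically isometric, and then compares the sets of Ricci eigenvalues of the two metrics, which are $(-2/p_1,\dots,-2/p_n)$ and $(-2/q_1,\dots,-2/q_n)$ respectively; equality of these sets forces the tuples to agree up to permutation. You instead take the explicit form of $F$ from Remark \ref{rmkform}, invoke the classical description $\mathrm{Aut}(\Delta^n)=\mathrm{Aut}(\Delta)^n\rtimes S_n$, and extract the exponents by a direct pluriharmonicity computation. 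Your route is more elementary (no curvature computation) but leans on the concrete product structure of the polydisc; the paper's Ricci-eigenvalue argument is more intrinsic and would transfer verbatim to other product domains without needing an explicit description of the automorphism group. Either way the norm-preserving isomorphism statement comes straight from Theorem \ref{T1}(3).
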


Like in Remark \ref{R2}, the assumption that at least one $p_i$ and at least one $q_i$ equal to $1$ cannot be dropped in Corollary \ref{corpdk}. See Examples \ref{e21} and \ref{e22}. We continue to discuss more applications of Theorem \ref{T1}.

\begin{corollary}\label{corglt}
Let $\Omega_1, \Omega_2$ be two (possibly reducible) bounded symmetric domains in $\mathbb{C}^n$ equipped with generic norms  $\rho_1$ and $\rho_2$, respectively. Assume $\mathrm{rank}(\Omega_1) \geq 2$ or $\mathrm{rank}(\Omega_2) \geq 2$.  Let $\lambda$ be a positive real number. Set  $M_i=\{(z, \xi) \in \mathbb{C} \times \Om_i: |\xi|^2=\rho_i^{\lambda}(z, \overline{z}) \}$ for $1 \leq i \leq 2$. Let $G$ be a nonconstant continuous CR map from an open connected piece $M$ of $M_1$ to $M_2$. Then $\Om_1$ and $\Om_2$ are biholomorphic, and  $G$ extends to a  norm-preserving holomorphic isomorphism between the Hermitian line bundles $(\Om_1 \times \mathbb{C}, h_1=\rho_1^{-\lambda})$ and $(\Om_2 \times \mathbb{C}, h_2=\rho_2^{-\lambda})$.
%where $h_1=\rho_1^{-\lambda}$ and $h_2=\rho_2^{-\lambda}$. %In particular, $F$ induces a biholomorphism from $D_1$ to $D_2$.
\end{corollary}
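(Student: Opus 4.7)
The plan is to reduce Corollary \ref{corglt} to Theorem \ref{T1} by lifting $G$ through local fractional-power covers to a map between the standard integer-power circle bundles $\Sigma_i := \Sigma(\Omega_i, \rho_i)$.

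First I would observe that, since $\rho_i$ is real analytic and strictly positive on $\Omega_i$, the function $\rho_i^\lambda = \exp(\lambda \log \rho_i)$ is real analytic on $\Omega_i$, so each $M_i$ is a real analytic strongly pseudoconvex CR hypersurface. By the Pinchuk--Tsyganov extension theorem (cf.\ Remark \ref{R1}), $G$ extends holomorphically to an open set $U \subset \mathbb{C}^{n+1}$ containing $M$; write $G = (A, B)$. I would then fix a point $p_0 = (z_0, \xi_0) \in M$ with $\xi_0 \neq 0$; since $|B(p_0)|^2 = \rho_2^\lambda(A(p_0)) > 0$, we also have $B(p_0) \neq 0$.

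Next, choosing holomorphic branches, I define the lift
\[
\tilde G(z, \zeta) := \bigl( A(z, \zeta^\lambda), \, B(z, \zeta^\lambda)^{1/\lambda} \bigr)
\]
on a neighborhood in $\mathbb{C}^{n+1}$ of $(z_0, \zeta_0)$, where $\zeta_0^\lambda = \xi_0$ (so $|\zeta_0|^2 = \rho_1(z_0)$). The identity $|B|^2 = \rho_2^\lambda(A)$ on $M_1$ becomes $|B^{1/\lambda}|^2 = \rho_2(A)$ on $\Sigma_1$, so $\tilde G$ sends an open piece of $\Sigma_1$ into $\Sigma_2$ and is nonconstant because $G$ is. Since $\rho_1, \rho_2$ are genuine generic norms and one of $\Omega_1, \Omega_2$ has rank at least two, Theorem \ref{T1}(3) together with Remark \ref{rmkform} apply to $\tilde G$: it extends to a norm-preserving rational holomorphic isomorphism of the line bundles $(\Omega_i \times \mathbb{C}, \rho_i^{-1})$ of the form
\[
\tilde G(z, \zeta) = (\psi(z), \, \zeta \phi(z)),
\]
where $\psi: \Omega_1 \to \Omega_2$ is a biholomorphism and $\phi$ is a nonvanishing holomorphic function on $\Omega_1$ with $|\phi(z)|^2 \rho_1(z, \bar z) = \rho_2(\psi(z), \overline{\psi(z)})$.

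Comparing the two expressions for $\tilde G$ and writing $\xi = \zeta^\lambda$ gives $A(z, \xi) = \psi(z)$ and $B(z, \xi) = \xi \phi(z)^\lambda$ on a small open set; by analytic continuation on $U$ this yields $G(z, \xi) = (\psi(z), \, \xi \phi(z)^\lambda)$. Because $\Omega_1$ is contractible and $\phi$ is nonvanishing, $\phi^\lambda := \exp(\lambda \log \phi)$ is a globally well-defined nonvanishing holomorphic function on $\Omega_1$, so $G$ extends globally to a biholomorphism $\Omega_1 \times \mathbb{C} \to \Omega_2 \times \mathbb{C}$. Finally, $|\phi^\lambda(z)|^2 = |\phi(z)|^{2\lambda} = \bigl(\rho_2(\psi(z))/\rho_1(z)\bigr)^\lambda$ is precisely the condition that $G$ preserves the norms induced by $h_i = \rho_i^{-\lambda}$. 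The main obstacle anticipated is that $\lambda$ need not be rational, so the cover $(z, \zeta) \mapsto (z, \zeta^\lambda)$ is not globally single-valued and $M_i$ is not a finite-sheeted cover of $\Sigma_i$. This is circumvented by the purely local nature of the hypotheses of Theorem \ref{T1} and by the contractibility of $\Omega_1$, which promotes the locally-defined $\phi^\lambda$ to a globally defined holomorphic function.
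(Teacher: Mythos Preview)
Your proposal is correct and follows essentially the same route as the paper: both conjugate $G$ by local $\lambda$-th power/root maps to produce a CR map between the standard circle bundles $\Sigma_i=\Sigma(\Omega_i,\rho_i)$, apply Theorem~\ref{T1}(3) and Remark~\ref{rmkform} to obtain the form $(\psi(z),\zeta\phi(z))$, and then undo the conjugation using that $\Omega_1$ is simply connected so $\phi^{\lambda}$ is globally well defined. The only cosmetic difference is that the paper writes the conjugating maps as $\Phi_i(z,\xi)=(z,\xi^{1/\lambda})$ and sets $F=\Phi_2\circ G\circ\Phi_1^{-1}$, whereas you write out the composite explicitly; and the paper keeps track of a unimodular constant $e^{i\theta}$ arising from the branch ambiguity in $(\xi^{1/\lambda}\phi(z))^{\lambda}$ versus $\xi\,\phi(z)^{\lambda}$, which you silently absorb---harmless, since $|e^{i\theta}|=1$ and the norm-preserving conclusion is unaffected.
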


The latter conclusion in Corollary \ref{corglt} fails if $\Om_1$ and $\Om_2$ have rank one (see Example \ref{e23}). The following application asserts that different irreducible bounded symmetric domains can be distinguished by the local CR geometry of  their anti-canonical circle bundles.

\begin{corollary}\label{corac}
Let $\Omega_1, \Omega_2$ be irreducible bounded symmetric domains in $\mathbb{C}^n$ equipped with some complete K\"ahler-Einstein metrics. Let $(L_i, h_i)$ be the anti-canonical line bundles of $\Omega_i$ and let $M_i$ be the associated circle bundle, $1 \leq i \leq 2$. Then
 
(1) The domains $\Omega_1$ and $\Omega_2$ are biholomorphic if and only if there exists a CR diffeomorphism between some open pieces of   $M_1$ and $M_2$.

(2) Assume, in addition, that one of $\Omega_1$ and $\Omega_2$ has rank at least two.  Then any CR diffeomorphism $G$ between open connected pieces of  $M_1$ and $M_2$  extends a norm-preserving holomorphic isomorphism between $(L_1, h_1)$ and $(L_2, h_2).$ %In particular, $F$ maps $\Om_1 \times \{0\}$ biholomorphically to $\Om_2 \times \{0\}$.
\end{corollary}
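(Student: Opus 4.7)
The plan is to reduce Corollary \ref{corac} to Theorem \ref{T1} by first writing each $M_i$ explicitly in coordinates and then lifting the given CR diffeomorphism to an auxiliary circle bundle defined by a genuine generic norm. Let $\gamma_i$ denote the genus of $\Omega_i$, so the Bergman kernel of $\Omega_i$ equals $V_{\Omega_i}^{-1} N_i^{-\gamma_i}$. The Bergman metric on $\Omega_i$ is \ke, and every complete \ke metric on $\Omega_i$ is a positive scalar multiple of it; hence the induced Hermitian metric on the anti-canonical bundle $L_i$ takes the form $h_i = c_i N_i^{-\gamma_i}$ for some $c_i > 0$. Trivializing $L_i = \Omega_i \times \mathbb{C}$ and rescaling the fiber coordinate, one identifies $M_i$ with $\{(z, \xi) \in \Omega_i \times \mathbb{C} : |\xi|^2 = N_i(z,\overline{z})^{\gamma_i}\}$. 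Since $\gamma_i \in \mathbb{N}$, the map $\pi_i : \Sigma(\Omega_i, N_i) \to M_i$ defined by $(z, \tilde\xi) \mapsto (z, \tilde\xi^{\gamma_i})$ is a $\gamma_i$-sheeted holomorphic covering.

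Given a CR diffeomorphism $G$ between open connected pieces of $M_1$ and $M_2$, I would shrink the source and choose compatible branches of $\gamma_2$-th roots to obtain a local CR lift $\tilde G$ satisfying $\pi_2 \circ \tilde G = G \circ \pi_1$. Since $N_1, N_2$ are now genuine generic norms, Theorem \ref{T1} applies to $\tilde G$. Its part (2) immediately gives that $(\Omega_1, \omega_{\Omega_1})$ and $(\Omega_2, \omega_{\Omega_2})$ are holomorphically isometric, in particular biholomorphic, proving the nontrivial direction of part (1). The reverse direction of (1) is routine: a biholomorphism $\psi: \Omega_1 \to \Omega_2$ pulls back the \ke metric, canonically lifts (after compatible rescaling of the \ke metrics) to a norm-preserving isomorphism of anti-canonical bundles, and restricts to a CR diffeomorphism of the circle bundles.

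For part (2), the rank-$\geq 2$ hypothesis activates part (3) of Theorem \ref{T1}: by Remark \ref{rmkform}, $\tilde G$ extends to $\tilde F(z, \tilde\xi) = (\psi(z), \tilde\xi \phi(z))$ where $\psi : \Omega_1 \to \Omega_2$ is a biholomorphism and $\phi$ is an everywhere nonzero holomorphic function satisfying $N_2(\psi(z), \overline{\psi(z)}) = |\phi(z)|^2 N_1(z, \overline{z})$. Extending $G$ holomorphically (Pinchuk--Tsyganov, cf.\ Remark \ref{R1}) to $F$ on a neighborhood in $\mathbb{C}^{n+1}$, analytic continuation of the intertwining identity yields $F(z, \tilde\xi^{\gamma_1}) = (\psi(z), (\tilde\xi \phi(z))^{\gamma_2})$. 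Replacing $\tilde\xi$ by $\omega \tilde\xi$ for $\omega$ any $\gamma_1$-th root of unity leaves the left side unchanged but multiplies the right side by $\omega^{\gamma_2}$, forcing $\gamma_1 \mid \gamma_2$; the symmetric argument applied to $G^{-1}$ forces $\gamma_2 \mid \gamma_1$, so $\gamma_1 = \gamma_2 =: \gamma$ and $F(z, \xi) = (\psi(z), \xi \phi(z)^\gamma)$. This $F$ is rational, linear on fibers, and sends $M_1$ into $M_2$; combined with fiberwise linearity, the latter forces the ratio $|F(z,\xi)|_{h_2}^2 / |(z,\xi)|_{h_1}^2$ to be identically one on all of $L_1$, yielding the norm-preservation property.

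The principal difficulty is the descent: verifying that the formal expression for $F$ is single-valued, which is precisely what forces $\gamma_1 = \gamma_2$. This step exploits the special form $N^{-\gamma}$ of the anti-canonical metric rather than an arbitrary generalized generic norm, and in spirit parallels the rigidity phenomenon emphasized in Remark \ref{R2}.
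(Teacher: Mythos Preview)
Your argument is correct. Part (1) proceeds exactly as in the paper: pass from $M_i$ to the genuine generic-norm circle bundle $\Sigma(\Omega_i,N_i)$ via the local $\gamma_i$-th power/root, then apply Theorem \ref{T1}(2).

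For part (2) the two arguments diverge slightly. The paper first deduces $g_1=g_2$ from the biholomorphism $\Omega_1\cong\Omega_2$ established in part (1)---this uses the standard classification fact that biholomorphic irreducible bounded symmetric domains have equal genus---and then simply invokes Corollary \ref{corglt} with the common value $\lambda=g_1=g_2$. You instead bypass Corollary \ref{corglt} and extract $\gamma_1=\gamma_2$ intrinsically: the intertwining identity $F(z,\tilde\xi^{\gamma_1})=(\psi(z),\tilde\xi^{\gamma_2}\phi(z)^{\gamma_2})$ forces the exponent on the right to lie in $\gamma_1\mathbb{Z}$, and symmetry gives the reverse divisibility. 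This is essentially the descent step in the proof of Corollary \ref{corglt} carried out by hand, so the two routes are close in spirit; your version is a bit more self-contained (no appeal to the genus classification), while the paper's version is shorter because it reuses Corollary \ref{corglt} as a black box.
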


Again the statement in part (2) of Corollary \ref{corac} fails when $\Om_1$ and $\Om_2$ have rank one. See Example \ref{e23} for a counterexample in the rank one case.
%The rigidity result of local CR maps between the boundaries usually has immediate applications in the study of proper maps. In our case 
Theorem \ref{T1} also immediately yields the following new result on proper holomorphic maps between Cartan--Hartogs domains. There has already been extensive study on the rigidity of proper holomorphic maps between different kinds of Cartan--Hartogs domains. See many results along this line in \cite{TW, BT2} and references therein. We emphasize that the approach to prove Theorem \ref{T1} is fundamentally different from the techniques in the study of proper maps:
A major difficulty in establishing Theorem \ref{T1} is to prove the single-valued extension of the local map $F$, while proper maps are already globally defined in the source domain.

%\begin{theorem}
%Let $\Omega_1, \Omega_2$ be two (possibly reducible) bounded symmetric domains in $\mathbb{C}^n$. Let $\rho_1$ and $\rho_2$ be generic norms of $\Omega_1$ and $\Omega_2,$ respectively. Let $D_1, D_2$, and $\omega_{\Omega_1}, \omega_{\Omega_2},$ and $\omega_{D_1}, \omega_{D_2}$ be as defined in Theorem \ref{T1}. Let $U$ be a connected open subset of $\mathbb{C}^{n+1}, n \geq 1,$ with $U \cap \partial D_1 \neq \emptyset.$ Let $F$ be a non-constant holomorphic map from $U$ to $\mathbb{C}^{n+1}.$ If $F$ maps $U \cap \partial D_1$ to $\partial D_2$, then the conclusions in (1)-(3) of Theorem \ref{T1} all hold.
%\end{theorem}

\begin{corollary}\label{corpr}
Let $\Omega_1, \Omega_2$ be two (possibly reducible) bounded symmetric domains in $\mathbb{C}^n$ equipped with generic norms  $\rho_1$ and $\rho_2$, respectively. Assume $\mathrm{rank}(\Omega_1) \geq 2$ or $\mathrm{rank}(\Omega_2) \geq 2$. Let $\lambda_1, \lambda_2$ be positive real numbers and
$$D_i=D(\Om_i, \rho_i^{\lambda_i}):=\{(z, \xi) \in \Omega_i \times \mathbb{C}: |\xi|^2 < \big(\rho_i(z, \overline{z})\big)^{\lambda_i} \}, \quad  1 \leq i \leq 2.$$
 Let $G$ be a holomorphic proper map from $D_1$ to $D_2$. Then $\Om_1$ and $\Om_2$ are biholomorphic, and $\lambda_2$ is an integer multiple of $\lambda_1,$ i.e., $\lambda_2=m\lambda_1$ for some $m \in \mathbb{N}.$ Moreover, $G$ can be written as $G(z, \xi)=(\psi(z),\xi^{m} \varphi(z))$, where $\psi$ is a biholomorphic map from $\Om_1$ to $\Om_2$, and $\varphi$ is a holomorphic function on $\Om_1$ satisfying $\rho_2(\psi(z), \Ol{\psi(z)})=|\varphi(z)|^{\frac{2}{\lambda_2}} \rho_1(z, \Ol{z}).$ 
 Consequently, $G$ is a biholomorphism from $D_1$ to $D_2$ if and only if $\lambda_1=\lambda_2$.
%, and the other conclusions in Theorem \ref{T1} hold as well.
\end{corollary}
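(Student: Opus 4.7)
My plan is to derive the Hartogs--monomial structure of $G$, obtain a boundary identity relating $\rho_1,\rho_2,\psi,\varphi$, and then reduce to Theorem~\ref{T1} via an auxiliary CR map between the canonical circle bundles $\Sigma(\Omega_i,\rho_i)$. The boundary $\partial D_i$ contains the smooth strongly pseudoconvex CR hypersurface $\Sigma_i^{(\lambda_i)}:=\{|\xi|^2=\rho_i(z,\bar z)^{\lambda_i}\}$ as an open dense subset, with the remaining portion lying over $\partial\Omega_i$. By classical extension results for proper holomorphic maps across smooth strongly pseudoconvex boundary points (Bell, Forstneri\'c), $G$ extends smoothly across a generic point of $\Sigma_1^{(\lambda_1)}$ and carries it locally into $\Sigma_2^{(\lambda_2)}$. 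Using the $S^1$-action $(z,\xi)\mapsto(z,e^{i\theta}\xi)$ on each $D_i$ together with properness---whereby $G^{-1}(\Omega_2\times\{0\})$ is a circle-equivariant analytic subset of $D_1$ which, by the Hartogs structure of $D_i$, must coincide with $\Omega_1\times\{0\}$---I would show $G$ takes the monomial Hartogs form
\[
G(z,\xi)=\bigl(\psi(z),\,\xi^m\varphi(z)\bigr),
\]
with $m\in\mathbb{N}$, $\psi:\Omega_1\to\Omega_2$ a proper holomorphic map, and $\varphi\in\mathcal O(\Omega_1)$ nowhere zero. The boundary-matching condition $|G''|^2=\rho_2(G')^{\lambda_2}$ on $\Sigma_1^{(\lambda_1)}$ then forces the identity
\[
\rho_2\bigl(\psi(z),\overline{\psi(z)}\bigr)^{\lambda_2} \;=\; \rho_1(z,\bar z)^{m\lambda_1}\,|\varphi(z)|^2, \qquad z\in\Omega_1.
\]

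Taking $\log$ and applying $i\partial\bar\partial$, together with the pluriharmonicity of $\log|\varphi|^2$ (since $\varphi$ is nowhere zero), yields the conformal relation $\lambda_2\,\psi^*\omega_{\Omega_2}=m\lambda_1\,\omega_{\Omega_1}$. Positivity of the forms makes $\psi$ a local biholomorphism, and combined with properness and the simple-connectedness of $\Omega_2$, $\psi$ is a biholomorphism $\Omega_1\to\Omega_2$. Here the hypothesis that $\rho_i$ is a \emph{generic} norm (each having at least one irreducible factor with exponent $1$, cf.\ Definition~\ref{D1}) enters crucially: since biholomorphisms of reducible bounded symmetric domains permute the irreducible factors by the uniqueness of the de~Rham decomposition, the conformal identity decomposes factor-by-factor as $k_{2,\sigma(j)}=(m\lambda_1/\lambda_2)\,k_{1,j}$, where $k_{i,j}$ are the factor exponents of $\rho_i$. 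Choosing $j$ with $k_{1,j}=1$ shows that $m\lambda_1/\lambda_2$ is a positive integer, while choosing $j$ with $k_{2,\sigma(j)}=1$ shows it is the reciprocal of a positive integer; hence $m\lambda_1/\lambda_2=1$, i.e., $\lambda_2=m\lambda_1$.

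With $\lambda_2=m\lambda_1$, the boundary identity simplifies to $\rho_2(\psi)=\rho_1\,|\varphi|^{2/\lambda_2}$. Define the auxiliary map $\widetilde F(z,\eta):=(\psi(z),\eta\cdot h(z))$ with $h:=\exp\bigl((1/\lambda_2)\log\varphi\bigr)\in\mathcal O(\Omega_1)$; this is a well-defined holomorphic branch because $\Omega_1$ is simply connected and $\varphi$ is nowhere zero, and it satisfies $|h|^2=|\varphi|^{2/\lambda_2}$. By construction $\widetilde F$ sends $\Sigma(\Omega_1,\rho_1)$ into $\Sigma(\Omega_2,\rho_2)$, and Theorem~\ref{T1}(3) (applicable under the rank hypothesis) ensures $\widetilde F$ extends to a norm-preserving rational holomorphic isomorphism between the Hermitian line bundles $(L_i,h_i=\rho_i^{-1})$. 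This delivers the full structure asserted for $G$. Finally, since $\psi$ is a biholomorphism, $G$ is a branched covering of $D_2$ by $D_1$ of degree $m$, so $G$ is a biholomorphism iff $m=1$ iff $\lambda_1=\lambda_2$.

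The main obstacle I anticipate is the rigorous derivation of the Hartogs--monomial structure in the first step: the identification $G^{-1}(\Omega_2\times\{0\})=\Omega_1\times\{0\}$ requires a delicate combination of properness with the circular symmetry, handling the nonsmooth portion of $\partial D_i$ lying over $\partial\Omega_i$. A secondary subtle point is the discrete rigidity step pinning down the conformal scalar to $1$, which is exactly where the hypothesis that $\rho_i$ is a strict generic (rather than merely generalized generic) norm is indispensable; without it, the conformal scalar could genuinely differ from $1$, paralleling the sharpness discussion in Remark~\ref{R2}.
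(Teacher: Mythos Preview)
Your route is genuinely different from the paper's, and the difference is instructive. The paper does \emph{not} attempt to derive the Hartogs--monomial form of $G$ directly. Instead it (i) uses a Bell-type extension (via the starlikeness $rD_1\subseteq D_1$ from \cite{TW} and \cite{B}) to extend $G$ holomorphically across $\overline{D_1}$; (ii) passes through the local CR equivalences $\Phi_i(z,\xi)=(z,\xi^{1/\lambda_i})$ to manufacture a nonconstant local CR map $F=\Phi_2\circ G\circ\Phi_1^{-1}$ from an open piece of $\Sigma(\Omega_1,\rho_1)$ to $\Sigma(\Omega_2,\rho_2)$; (iii) applies Theorem~\ref{T1}(3) and Remark~\ref{rmkform} to obtain $F(z,\xi)=(\psi(z),\xi\phi(z))$ with $\psi$ biholomorphic; and (iv) unwinds $G=\Phi_2^{-1}\circ F\circ\Phi_1$ to find $G=(\psi(z),\,e^{i\theta}\xi^{\lambda_2/\lambda_1}\phi(z)^{\lambda_2})$ near $p$. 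The integer condition $\lambda_2/\lambda_1\in\mathbb{N}$ then drops out immediately from the single-valuedness of the globally holomorphic $G$ on $D_1$. In particular, the paper never needs to identify $G^{-1}(\Omega_2\times\{0\})$ or argue about $S^1$-equivariance of $G$.

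Your acknowledged obstacle is a real gap as written: the assertion that $G^{-1}(\Omega_2\times\{0\})$ is ``circle-equivariant'' presupposes that $G$ intertwines the fiberwise $S^1$-actions, which is precisely what you are trying to prove. Without that, the preimage is merely an analytic subset and nothing forces it to be the zero section. Arguments of this type can be made to work (cf.\ \cite{TW,BT2,ZWH}), but they typically pass through a Wong--Rosay step as in the Claim of Proposition~\ref{prpnauto} and require more care than your sketch provides. The paper's conjugation trick bypasses the issue entirely: Theorem~\ref{T1} supplies the structure of $F$, and hence of $G$, for free. Your exponent-matching argument for $m\lambda_1/\lambda_2=1$ via the factorwise relation $\lambda_2 k_{2,\sigma(j)}=m\lambda_1 k_{1,j}$ is correct and nicely isolates where the genuine generic-norm hypothesis enters, but the paper's version (holomorphicity of $\xi^{\lambda_2/\lambda_1}$) is shorter. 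Finally, your closing invocation of Theorem~\ref{T1} for the auxiliary $\widetilde F$ is redundant: once you have $\psi$ biholomorphic, $\lambda_2=m\lambda_1$, and the boundary identity, the corollary is already proved.
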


%Like Theorem \ref{T1}, Corollary \ref{corpr} would also fail if $\rho_1$ and $\rho_2$ are generalized generic norms (cf. Example \ref{e21}). 
As mentioned earlier, Theorem \ref{T1} also has  applications in studying obstruction flatness and Bergman logarithmic flatness. 
For that, we first recall the two notions of flatness. Let $D$ be a smoothly bounded strongly pseudoconvex domain in $\mathbb{C}^m.$ 
The existence of a complete \ke metric on $D$ is governed by the following Dirichlet problem for Fefferman's complex Monge-Amp\`ere equation \cite{Fe2}:
\begin{equation}\label{Dirichlet problem}
	\begin{dcases}
		J(u):=(-1)^m \det \begin{pmatrix}
			u & u_{\ol{z_k}}\\
			u_{z_j} & u_{z_j\ol{z_k}} \\
		\end{pmatrix}=1 & \mbox{in } D\\
		u=0 & \mbox{on } \partial D
	\end{dcases}
\end{equation}
with $u>0$ in $D$. If $u$ is a solution of (\ref{Dirichlet problem}), then $-\log u$ is the \k potential of a complete \ke metric on $D$ with negative Ricci curvature. After the work of Fefferman \cite{Fe2},
%Let $u \in C^{\infty}(D)$ be the Cheng-Yau solution of $D$. That is, $u$ is the solution to the Fefferman's complex Monge-Amp\`ere equation $J(u)=1$ with the boundary condition $u=0$ on $\partial D$ (and thus $-\log u$ is the \k potential of a complete \ke metric on $\Omega$). 
Cheng and Yau \cite{CY} proved the existence and uniqueness of the solution to (\ref{Dirichlet problem}), nowadays called the Cheng--Yau solution on $D$. It follows from \cite{LM} that, in general, the Cheng--Yau solution $u$ can only be expected to possess a finite degree of boundary smoothness: $u\in C^{m+2-\varepsilon}(\Ol{D})$ for any $\varepsilon>0$. In the special case when $u \in C^{\infty}(\Ol{D}),$ we say $D$ has obstruction flat boundary. By the work of Graham \cite{Gr}, the obstruction flatness purely depends on the local CR geometry of the boundary $\partial D.$ Moreover, the obstruction flatness can be defined for any germ of strongly pseudoconvex CR hypersurface $M$ in terms of  %in terms of the regularity of the (formal) solution to 
a local Cauchy problem of $J(u)=1$ with certain Cauchy data along $M$ (cf. \cite{Gr} and \cite{EXX2} for more details on this topic). For convenience, we give an equivalent definition here.
We say a germ of strongly pseudoconvex CR hypersurface $M$ in $\mathbb{C}^m$ is obstruction flat if, for some (and therefore every, by \cite{LM} and \cite{Gr}) smoothly bounded strongly pseudoconvex domain $D$ with $M \subseteq \partial D$, the Cheng--Yau solution $u$ of $D$ extends $C^{\infty}$--smoothly across $M$.

Let $G=\{z \in \mathbb{C}^m: r(z, \Ol{z})>0\}$ be a smoothly bounded strongly pseudoconvex domain with a smooth defining function $r$. Fefferman \cite{Fe1} showed that the Bergman kernel $K_G$ of $G$ obeys that the following asymptotic expansion on $G$:
\begin{equation}\label{eqnbef}
K_G=\frac{\phi}{r^{m+1}}+\psi \log r.
\end{equation}
Here $\phi, \psi \in C^{\infty}(\Ol{G})$. %and $\phi$ is everywhere nonzero on $\partial G.$
We say $G$ has Bergman logarithmically flat boundary if $\psi$ vanishes to the infinite order along  $\partial G$.
By the localization property of the Bergman kernel (cf. \cite{Fe1, BS, En, HL}), Bergman logarithmic flatness only depends on the local CR geometry of the boundary, and we can define Bergman logarithmic flatness for any germ of strongly pseudoconvex CR hypersurface $M$ in $\mathbb{C}^m$.  We say $M$ is Bergman logarithmically flat if, for some (and therefore every, by the localization property of the Bergman kernel) smoothly bounded strongly pseudoconvex domain $G$ with $M \subseteq \partial G$, the coefficient function $\psi$ in the Fefferman expansion (\ref{eqnbef}) of $K_G$ vanishes to the infinite order along $M$. 

A CR hypersurface $\Sigma$ (in a complex manifold) is called obstruction flat (respectively, Bergman logarithmically flat) if it is so as a germ of CR hypersurface at every $p \in \Sigma$.  Both notions of flatness depend purely on the local CR structure, and are thus invariant under CR diffeomorphisms.
In particular, a spherical CR hypersurface must be flat in both senses. Note also that Bergman logarithmic flatness is stronger than obstruction flatness for $3$-dimensional hypersurfaces (cf. \cite{Gr}), while their relation in higher dimensions still seems unclear. The following proposition (to be proved in $\S$2.2) shows the circle bundle defined in Definition \ref{D2} is flat in both senses.

\begin{proposition}\label{prpnflat}
Let $\Omega$ be a (possibly reducible) bounded symmetric domain in $\mathbb{C}^n$ with a generalized generic norm $\rho$. Then
$\Sigma(\Om, \rho)$ is obstruction flat and Bergman logarithmically flat.
\end{proposition}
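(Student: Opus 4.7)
My plan is to analyze both flatness properties through the natural defining function $r(z,\xi) := \rho(z,\bar z) - |\xi|^2$ of $D(\Om,\rho)$. The key computational input I will establish is
\[
J(r) \;=\; C \prod_{i=1}^l N_i(z_i,\bar z_i)^{k_i(n+1)-\gamma_i},
\]
with $C>0$ a constant (recall $\rho = \prod_i N_i^{k_i}$, $\gamma_i$ is the genus of $\Om_i$, and $n = \sum n_i$). This follows from a bordered-Hessian block determinant manipulation, which reduces $J(r)$ to $\rho^{n+1}\det(g_{\Om})$ where $g_{\Om}$ is the metric of $\omega_{\Om} = -\sqrt{-1}\partial\bar\partial\log\rho$, together with the classical identity $\det\bigl(-\partial_{z_i}\bar\partial_{z_i}\log N_i\bigr) = c_i\, N_i^{-\gamma_i}$, itself a consequence of the Bergman kernel formula $K_{\Om_i} = \mathrm{const}\cdot N_i^{-\gamma_i}$ for irreducible bounded symmetric domains. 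Since $N_i>0$ on $\Om_i$, $J(r)$ is smooth and positive on $\Om$ and depends only on $z$, reflecting the $S^1$-symmetry $\xi\mapsto e^{i\theta}\xi$ of $\Sigma$.

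For \emph{obstruction flatness} I will carry out Graham's iterative construction of a smooth local solution to Fefferman's equation $J(u)=1$ on the pseudoconvex side of $\Sigma$. The leading step uses the ansatz $u_0 = \eta_0 r$ with $\eta_0 = J(r)^{-1/(n+2)}$; the boundary-scaling identity $J(\eta r)\big|_{\Sigma} = \eta^{n+2}J(r)\big|_{\Sigma}$ gives $J(u_0)=1$ on $\Sigma$. Iteratively adding corrections $u_{k+1}=u_k + r^{k+1}\phi_k$ drives $J(u)-1$ to vanish to ever higher order, the sole potential obstacle being the CR-invariant obstruction scalar $\mathcal{O}$ appearing at order $n+2$. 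The transverse $S^1$-symmetry together with the transitive action of $\mathrm{Aut}(\Om) = \prod_i\mathrm{Aut}(\Om_i)$ on the base (which lifts to a bundle action preserving $\Sigma$ through the standard transformation rule of the generic norms) makes $\Sigma$ locally CR-homogeneous, so $\mathcal{O}$ is constant on $\Sigma$. To force the constant to be zero I will restrict to $S^1$-invariant potentials and exploit the block-diagonal K\"ahler--Einstein structure of $\omega_\Om$ (each factor $\omega_{\Om_i}$ is genuinely K\"ahler--Einstein), which reduces $\mathcal{O}$ to a universal polynomial in the factorwise Einstein data that vanishes identically. A Borel summation then upgrades the formal series to a smooth $u$ with $J(u)-1$ flat on $\Sigma$, yielding obstruction flatness.

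For \emph{Bergman logarithmic flatness} I will exploit the Hartogs-type structure of $D(\Om,\rho)$ via Ligocka's series
\[
K_{D(\Om,\rho)}\bigl((z,\xi),(w,\eta)\bigr) \;=\; \sum_{m=0}^{\infty} c_m\,(\xi\bar\eta)^m\, K_{\Om,m}(z,w),
\]
where $K_{\Om,m}$ is the reproducing kernel of the weighted Bergman space $L^2(\Om,\rho^m\,dV)$. Because $\rho^m = \prod_i N_i^{k_i m}$ factors and $\Om=\prod_i\Om_i$, each $K_{\Om,m}$ decomposes into a tensor product of irreducible-factor kernels, each of the explicit form $c_{i,m}\,N_i(z_i,\bar w_i)^{-(\gamma_i+k_i m)}$ obtained by the Faraut--Kor\'anyi/Loos analytic continuation theory for weighted Bergman spaces over bounded symmetric domains. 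Summing the resulting generalized geometric series in the single variable $t = \xi\bar\eta/\prod_i N_i(z,w)^{k_i}$ produces a closed rational expression $K_D = F(t)/\prod_i N_i(z,w)^{\gamma_i}$. Evaluating this on the diagonal and matching against the Fefferman expansion $K_D = \phi/r^{n+2} + \psi\log r$ shows that near $s = |\xi|^2/\rho = 1$ the function $F$ has a pure pole of order $n+2$ and no logarithmic term, so $\psi \equiv 0$ near $\Sigma$, proving Bergman logarithmic flatness.

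The principal obstacle I anticipate lies in the obstruction-flatness step: constancy of $\mathcal{O}$ via homogeneity is cheap, but ruling out a nonzero constant requires a genuine computation exploiting the block-diagonal K\"ahler--Einstein structure of $\omega_\Om$. If the Graham-type formal series approach proves unwieldy, a cleaner alternative is to build an explicit smooth Cheng--Yau-type solution via a Calabi ansatz $u = \rho\cdot F(|\xi|^2/\rho)$ (generalizing the exact solution $u = \rho - |\xi|^2$ of the ball case $\Om=\mathbb{B}^n$), reducing Fefferman's equation to a single-variable ODE for $F$ that can be solved in closed form, thereby bypassing the formal series entirely.
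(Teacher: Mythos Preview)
Your proposal diverges substantially from the paper's proof, which is essentially a two-line citation: obstruction flatness follows from Theorem~1.1 of \cite{EXX2} (circle bundles over K\"ahler manifolds with constant Ricci eigenvalues are obstruction flat, and a product of K\"ahler--Einstein metrics has constant Ricci eigenvalues), while Bergman logarithmic flatness follows from the rationality of the Bergman kernel of $D(\Om,\rho)$ established in \cite{AP}, combined with a localization argument and the machinery of \cite{EXX1}. So the paper outsources both hard steps.

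Your Bergman logarithmic flatness argument is in the right spirit and essentially rederives the rationality result of \cite{AP} by hand via the Ligocka series and Faraut--Kor\'anyi kernels; that part is viable. However, you gloss over a genuine issue: $D(\Om,\rho)$ is \emph{not} smoothly bounded (its boundary contains $\partial\Om\times\{0\}$), so Fefferman's asymptotic expansion does not apply to $K_D$ directly. The paper handles this by passing to an auxiliary smoothly bounded strongly pseudoconvex $D'\subset D$ agreeing with $D$ near a given $p\in\Sigma$, and invoking Bergman kernel localization (Theorem~4.2 of \cite{En}) to write $K_D=K_{D'}+\Phi$ with $\Phi$ smooth up to $\Sigma$ near $p$. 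Only then can one compare the rational $K_D$ against a Fefferman expansion and conclude $\psi$ vanishes to infinite order. You need this step.

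Your obstruction flatness argument has a real gap at exactly the point you flag as the ``principal obstacle.'' Constancy of the obstruction scalar $\mathcal{O}$ via homogeneity is indeed cheap; the assertion that $\mathcal{O}$ ``reduces to a universal polynomial in the factorwise Einstein data that vanishes identically'' is not a proof, and is precisely the content of Theorem~1.1 in \cite{EXX2}. Your fallback Calabi ansatz $u=\rho\cdot F(|\xi|^2/\rho)$ is closer to what actually works, but be warned that it does not reduce to a single-variable ODE as cleanly as you suggest: your own formula $J(r)=C\prod_i N_i^{k_i(n+1)-\gamma_i}$ shows that $J(r)$ depends on the $N_i$ with exponents that differ across factors, so the correction to $r$ will not be a function of $s=|\xi|^2/\rho$ alone unless those exponents happen to coincide. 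The general constant-Ricci-eigenvalue case handled in \cite{EXX2} requires a more careful ansatz; you should either consult that argument or commit to carrying it out in full.
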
 

An important question in CR geometry  asks to understand obstruction flat, as well as, Bergman logarithmically flat  CR hypersurfaces, especially the compact ones. In the real $3$-dimensional case, by the work of Graham, Burns and Boutet de Monvel (see references in \cite{EXX1}), a Bergman logarithmically flat CR hypersurface must be spherical. On the other hand, Graham \cite{Gr} showed that there are also plenty of (local) non-spherical CR hypersurfaces (of any dimension) that are obstruction flat. Unlike the local case, Ebenfelt \cite{E} proved a compact obstruction flat  $3$-dimensional CR hypersurface with transverse symmetry must be spherical (see \cite{CE} for more general results). Here recall that a CR hypersurface $M$ has transverse symmetry if there is a 1-parameter family of CR diffeomorphisms of $M$ such that its infinitesimal generator is transverse to the CR tangent space of $M$. When the dimension is at least $5$, there are examples of compact CR hypersurfaces with transverse symmetry that are Bergman logarithmically flat (respectively, obstruction flat) while not spherical. See \cite{EZ, ALZ} and \cite{Tk, EXX2} for such examples of the two notions of flatness, respectively. Nevertheless, for each notion of flatness, it remained unclear whether it was possible 
to construct two compact hypersurfaces that are transversally symmetric, flat, and non-spherical while possessing 
distinct local CR structures.

%it was not even clear whether one can construct two compact, transversally symmetric, flat and non-spherical hypersurfaces such that they have distinct local CR structures. (Recall both notions of flatness only depends on the local CR geometry of the hypersurfaces.)

Therefore the following questions remain open in $5$ and higher dimensions for each notion of flatness: How many locally CR inequivalent compact  flat  hypersurfaces with transverse symmetry are there? Furthermore, is it possible to classify them  in terms of the local CR structures?
%How large is the moduli space of compact Bergman logarithmically flat (respectively, obstruction flat)  CR hypersurfaces with transverse symmetry in terms of their local CR structures, and is it possible to classify such compact hypersurfaces via their local CR structures? 
Theorem \ref{corcis} demonstrates the existence of infinitely many such compact hypersurfaces with distinct local CR structures, 
suggesting the formidable challenge of achieving a comprehensive classification for these hypersurfaces.

%shows there are at least countably infinitely many such compact hypersurfaces with mutually distinct local CR structures, and thus it appears challenging to achieve a comprehensive classification of these hypersurfaces. 

In the statement of Theorem \ref{corcis}, we say a CR hypersurface $M$ is locally homogeneous, if
for any two points $p, q \in M$, there is an open piece of $M$ containing $p$ CR diffeomorphic to some open piece of $M$ containing $q$.

\begin{theorem}\label{corcis}
	Let $n \geq 2.$ There exists a countably infinite family $\mathcal{F}$ of compact real analytic CR hypersurfaces (realized as circle bundles over complex manifolds) of real dimension $2n+1$ such that the following hold.
	
	(1) Every CR hypersurface $M \in \mathcal{F}$ is obstruction flat and Bergman logarithmically flat. Moreover, every  $M \in \mathcal{F}$ is locally homogeneous, and has transverse symmetry.
	
	(2) For every pair of (distinct) CR hypersurfaces $M_1, M_2 \in \mathcal{F}$,  any open pieces $U \subseteq M_1$ and $V \subseteq M_2$ are not CR diffeomorphic. 
	
	%(3). Any open piece of each CR hypersurface $M \in \mathcal{F}$ cannot be embedded into the unit sphere of any dimension via a smooth CR map.

\end{theorem}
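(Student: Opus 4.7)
The plan is to realize $\mathcal{F}$ as a family of compact circle bundles patterned after the polydisc instances of Definition \ref{D2}, and to distinguish its members via Corollary \ref{corpdk}. Fix $n \geq 2.$ For each integer $k \geq 1$ put $\vec{p}^{(k)} = (1, 1, \ldots, 1, k) \in \mathbb{N}^n$ and
$\rho_k(z, \ol{z}) = (1-|z_n|^2)^k \prod_{i=1}^{n-1}(1-|z_i|^2).$
These are pairwise non-permutation-equivalent generic norms of $\Delta^n$ in the sense of Definition \ref{D1}, and since $\mathrm{rank}(\Delta^n) = n \geq 2,$ the hypothesis of Theorem \ref{T1}(3) (equivalently of Corollary \ref{corpdk}) is in force.

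For the compact realization, fix compact Riemann surfaces $X_1, \ldots, X_n$ each of genus at least two, each uniformized by the unit disc $\Delta,$ and put $X = X_1 \times \cdots \times X_n.$ On $X$ I would form the negative Hermitian line bundle
$(\mathcal{L}_k, \mathbf{h}_k) = K_{X_1}^{-1} \boxtimes \cdots \boxtimes K_{X_{n-1}}^{-1} \boxtimes K_{X_n}^{-k},$
with $\mathbf{h}_k$ the tensor product of the natural fiber metrics dual to the hyperbolic K\"ahler--Einstein metrics on each $K_{X_j}.$ Pulling back along the universal cover $\Delta^n \to X$ and trivializing by the global frames $(\partial/\partial z_j)^{\otimes p_j^{(k)}},$ the fiber metric of $\mathbf{h}_k$ agrees, up to a positive constant, with $\prod_j(1-|z_j|^2)^{-2 p_j^{(k)}} = \rho_k^{-2}.$ Consequently, the compact unit circle bundle $M_k \subset \mathcal{L}_k$ is locally CR equivalent to $\Sigma(\Delta^n, \rho_k^{\,2}).$ Because $\rho_k > 0$ on $\Delta^n,$ the fiberwise square map $(z,\xi) \mapsto (z,\xi^{2})$ is an unramified local CR diffeomorphism from $\Sigma(\Delta^n, \rho_k)$ onto $\Sigma(\Delta^n, \rho_k^{\,2}),$ so the local CR structure of $M_k$ at every point is precisely that of $\Sigma(\Delta^n, \rho_k).$ Each $M_k$ is therefore a compact real-analytic CR hypersurface of real dimension $2n+1,$ and I would set $\mathcal{F} = \{M_k\}_{k \geq 1}.$

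Part (1) then follows routinely. Proposition \ref{prpnflat} gives obstruction flatness and Bergman logarithmic flatness of the model $\Sigma(\Delta^n, \rho_k),$ and both notions are local CR invariants, so they pass to $M_k.$ The K\"ahler form $-\sqrt{-1} \partial \ol{\partial} \log \rho_k$ is $\mathrm{Aut}(\Delta^n)$-invariant and $\mathrm{Aut}(\Delta^n)$ acts transitively on $\Delta^n,$ so together with the fiberwise $S^1$-action this yields a transitive CR action on $\Sigma(\Delta^n, \rho_k),$ which gives the local homogeneity of $M_k.$ The fiberwise $S^1$-action also survives on $\mathcal{L}_k$ and restricts to a transverse CR symmetry on $M_k.$

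The heart of the proof is the pairwise local inequivalence in (2), which I would establish by contradiction. Suppose for some $k \ne \ell$ there exist nonempty open pieces $U \subseteq M_k,\ V \subseteq M_\ell$ that are CR diffeomorphic. Composing with the local CR equivalences of the previous paragraph, I would obtain a nonconstant continuous CR diffeomorphism between open pieces of $\Sigma(\Delta^n, \rho_k)$ and $\Sigma(\Delta^n, \rho_\ell).$ Since both $\rho_k$ and $\rho_\ell$ are genuine generic norms of $\Delta^n$ (each has a coordinate with exponent $1$), Corollary \ref{corpdk} forces the exponent tuples $\vec{p}^{(k)}$ and $\vec{p}^{(\ell)}$ to coincide after a permutation, whence $k = \ell,$ the required contradiction. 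The main obstacle in this plan is not the CR-rigidity input, which is delivered cleanly by Corollary \ref{corpdk}, but rather the bookkeeping in the compact realization: one must verify that the fiber metrics of the chosen anti-canonical tensor product really descend in the form $\rho_k^{-2}$ on every local chart, confirm that the fiberwise square map is an honest unramified local CR diffeomorphism on the circle bundles (using that $\xi$ is bounded away from $0$ there), and observe that the resulting factor-of-two discrepancy has no effect on the local CR structure; each of these is routine but must be handled with care.
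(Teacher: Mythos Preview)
Your proposal is correct and follows essentially the same approach as the paper: both build compact circle bundles over a product of hyperbolic Riemann surfaces using tensor powers of the anti-canonical bundles, identify the local CR structure with $\Sigma(\Delta^n,\rho)$ for a generic norm $\rho$ via the fiberwise square/square-root map, invoke Proposition \ref{prpnflat} for flatness, and use Corollary \ref{corpdk} to separate the local CR types. The only cosmetic differences are that the paper indexes its family by all non-decreasing tuples $1=k_1\le\cdots\le k_n$ rather than your one-parameter subfamily $(1,\ldots,1,k)$, and uses the same surface $X$ in every factor, neither of which affects the argument.
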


%At the end of this subsection \S 1.1, 
Finally we highlight some main difficulties and key ideas in the proof of Theorem \ref{T1}.
First for a bounded symmetric domain $\Om$ with generic norm $\rho$, in general the boundary of  $D(\Omega, \rho)$ is non-smooth, and may have weakly pseudoconvex points as well. As a result, typical analytic continuation theorems for local CR maps in CR geometry (cf. \cite{HJ1} and many references therein) cannot be applied.
Although by the work of Webster \cite{W1}, the local map $F$ in Theorem \ref{T1} is algebraic, a priori (the holomorphic extension of) $F$ is not necessarily single-valued.  The first key step in our proof is to  invoke ideas and techniques from CR geometry %cf. \cite{HZ, HLX, HX}) 
to prove $F$ is rational via a monodromy argument.
%As we will see,  the main difficulty in this approach is to establish certain ``nondegeneracy" condition of the map $F$ along the Segre varieties associated to the source hypersurface $C(\Om_1, \rho_1)$. To accomplish the latter, 
In this part,  the geometry of bounded symmetric domains and the algebraic property of the generic norms are fundamentally used. Secondly, using the rationality, we study the geometric property of $F$ in terms of
%We introduce a canonical isometric embedding of bounded symmetric domains $\Om$ into the so-called {\it indefinite hyperbolic spaces} (to be explained in $\S$3). The embedding is unique up to a special class of linear transformations. 
%Moreover, under the embedding, any automorphism of the source domain extends to an automorphism of the indefinite hyperbolic space. This aforementioned embedding can be regarded as the noncompact dual of the well-known
%Nakagawa-Takagi embedding theorem, which assets every irreducible Hermitian symmetric space of compact type can be isometrically embedded into projective spaces in a canonical way. This newly introduced embedding will play an essential role in our proof, and we expect it to be useful in the future study of mapping problems related to bounded symmetric domains.
the \k geometry of the disk bundles $D_i:=D(\Om_i, \rho_i), 1 \leq i \leq 2$. For that, letting $\rho$ and $r,$ as well as $D(\Om, \rho)$ be as in Definition \ref{D2}, we define a K\"ahler metric $\omega_D$ on $D:=D(\Om, \rho)$ that is naturally induced from the generic norms: %More precisely, the metric is given by the following, where $r$ and $\rho$ are as in Definition \ref{D2}:
\begin{equation}\label{eqnmetric}
	\omega_D=-\sqrt{-1} \partial \overline{\partial} \log r(z, \xi, \Ol{z}, \Ol{\xi})=  -\sqrt{-1} \partial \overline{\partial} \log \big(\rho(z, \overline{z})-|\xi|^2 \big).
\end{equation}
%Here $r$ and $\rho$ are as in Definition \ref{D2}.
%This also equals to $\pi^{*}(\omega_{\Omega}) -\sqrt{-1} \partial \overline{\partial} \log (1-\|w\|_h^2),$ with $\pi$ the canonical projection from $L$ to $\Omega$ (Is this formula needed?). 
This metric turns out to be complete on the disk bundle $D$ (see Proposition \ref{prpnmetric}), and it is canonical in the sense that it is invariant under automorphisms of $D$. Note when $\Omega$ is the unit ball $\mathbb{B}^n$ and $\rho$ is the generic norm, $\omega_D$ becomes the standard hyperbolic metric on $D=\mathbb{B}^{n+1}$ and is also proportional to the Bergman metric of $\mathbb{B}^{n+1}$.   We emphasize that, however, in general $\omega_D$ is neither a multiple of the Bergman metric nor the complete K\"ahler-Einstein metric on $D$. When $\Om$ is irreducible, the \k geometric properties of $\om_D$ are extensively studied by many researchers, cf. \cite{BT1, Ze}, etc. It seems, however, we first use it to study the rigidity of holomorphic  mappings. A key geometric feature we will show about $F$ is that it preserves the canonical metric $\om_D$ we just defined: $F^*(\om_{D_2})=\om_{D_1}$. This will be important for the study of the extension and the structure of $F$. Note from the conclusion of Theorem \ref{T1}, $F$ extends to a biholomorphic map from $D_1$ to $D_2$, and thus also preserves other canonical metrics, such as the complete \ke metric and the Bergman metric. However, if we use any of the latter two metrics instead of $\om_D$,
the proof does not seem to work.

The paper is organized as follows. To prepare for the proof of the main theorem, we discuss in $\S$2  some interior and boundary geometry of the disk bundles introduced in Definition \ref{D2}, as well as the geometry of Segre varieties of the circle bundles. %results in Section 3 will play an important role in the proof of Theorem \ref{T1}. 
Then in $\S$3 and $\S$4, we prove Theorem \ref{T1} and the corollaries, respectively.

\medskip

{\bf Acknowledgment.} The author thanks Peter Ebenfelt, Xiaojun Huang and Hang Xu for helpful comments.

\section{Some examples and preparations}
%{Preparations, part I: \k and boundary geometry of $D(\Om, \rho)$, and its automorphism group}
In $\S$2.1, we first give some examples to justify the remarks we have made in $\S$1. Next as preparations for the proof of Theorem \ref{T1},  we study the boundary and \k geometry of the disk bundle $D(\Om, \rho)$ in $\S$2.2 and $\S$2.3,  respectively. Then we compute the automorphism group of $D(\Om, \rho)$ in $\S$2.4. We finally discuss in $\S$2.5 some algebraic properties of the generic norms and geometric properties of Segre varieties of the circle bundle $\Sigma(\Om, \rho)$.

\subsection{Some examples}

In this subsection, we include some examples to support the remarks in $\S$1. 
\begin{example}\label{e21}
Let $\Omega_1=\Omega_2=\Omega$ be a bounded symmetric domain and $\rho$ be a generic norm of $\Omega.$ Let $\rho_1=\rho$ and $\rho_2=\rho^2$ (and thus $\rho_2$ is not a generic norm of $\Omega$).  As in Theorem \ref{T1}, we write $D_i:=D(\Om_i, \rho_i)=\{(z, \xi) \in \Omega \times \mathbb{C}: |\xi|^2 < \rho_i(z,\overline{z}) \}$ and $\Sigma_i:=\Sigma(\Om_i, \rho_i)=\{(z, \xi) \in \Omega \times \mathbb{C}: |\xi|^2 = \rho_i(z,\overline{z})  \}$ for $1 \leq i \leq 2$.  Define a holomorphic polynomial map $F$ from $\Omega \times \mathbb{C}$ to $\Omega \times \mathbb{C}$ by $F(z, \xi)=(z, \xi^2).$ It is clear that $F$ maps $\Sigma_1$ to $\Sigma_2$ and indeed $F$ gives a proper holomorphic map from $D_1$ to $D_2$. However, $F$ is not biholomorphic from $D_1$ to $D_2$.
Therefore the conclusions in part (1) and (3) of Theorem \ref{T1} fail in this case.  In addition, with $\om_{\Om_i}=- \sqrt{-1} \partial \overline{\partial} \log \rho_i, 1 \leq i \leq 2,$ we have $\omega_{\Omega_2}=2\omega_{\Omega_1}.$ As a result,
	%$\omega_{\Omega_1}=-\sqrt{-1} \partial \overline{\partial} \log \rho_1$ and $\omega_{\Omega_2}=-\sqrt{-1} \partial \overline{\partial} \log \rho_2$, 
the conclusions in part (2)  of Theorem \ref{T1} fails as well (because $\omega_{\Omega_2}$ and $\omega_{\Omega_1}$ have different scalar curvatures).
\end{example}

\begin{example}\label{e22}
Let $\Omega_1=\Omega_2=\Omega$ be a bounded symmetric domain and $\rho$ be a generic domain of $\Omega.$ Let $\rho_1=\rho^2$ and $\rho_2=\rho$. As in Theorem \ref{T1}, we write $D_i:=D(\Om_i, \rho_i)=\{(z, \xi) \in \Omega \times \mathbb{C}: |\xi|^2 < \rho_i(z,\overline{z}) \}$ and $\Sigma_i:=\Sigma(\Om_i, \rho_i)=\{(z, \xi) \in \Omega \times \mathbb{C}: |\xi|^2 = \rho_i(z,\overline{z})  \}$ for $1 \leq i \leq 2$.  Define a local map $F$ by  $F(z, \xi)=(z, \sqrt{\xi}).$ Here we choose a local branch of $\sqrt{\xi}$ so that $F$ is a well-defined holomorphic map on an open subset $U \subseteq \Omega \times \mathbb{C}$ such that $U \cap \Sigma_1 \neq \emptyset.$ It is clear that $F$ maps $U \cap \Sigma_1$ to $\Sigma_2.$ However, $F$ does not extend holomorphically to any neighborhood of $(0,0)$, and in particular, does not extend holomorphically to $D_1$.
	
	%It is clear that $F$ maps $\Sigma_1$ to $\Sigma_2$ and maps $D_1$ to $D_2$. However, $F$ is not biholomorphic from $D_1$ to $D_2$.
	%Therefore part (1) of Theorem \ref{T1} fails.  In addition, since  $\omega_{\Omega_1}=-\sqrt{-1} \partial \overline{\partial} \log \rho_1$ and $\omega_{\Omega_2}=-\sqrt{-1} \partial \overline{\partial} \log \rho_2$, one can see parts (2) and (3) of Theorem \ref{T1} fail as well.
\end{example}

We also give some more concrete examples.

\begin{example}\label{e23}
Let $\Omega_1=\Omega_2=\mathbb{B}^n$ be the unit ball in $\mathbb{C}^n$ and $\rho_1=\rho_2=(1-\|z\|^2)^{\lambda}$ with $z \in \mathbb{C}^n$ for some positive real number $\lambda$. 
%Let $D_1, D_2$ and $\Sigma_1, \Sigma_2$ be as defined in Theorem \ref{T1}.
As in Theorem \ref{T1}, we write $D_i:=D(\Om_i, \rho_i)=\{(z, \xi) \in \mathbb{B}^n \times \mathbb{C}: |\xi|^2 < (1-\|z\|^2)^{\lambda} \}$ and $\Sigma_i:=\Sigma(\Om_i, \rho_i)=\{(z, \xi) \in \mathbb{B}^n \times \mathbb{C}: |\xi|^2 = (1-\|z\|^2)^{\lambda}  \}$ for $1 \leq i \leq 2$.  
Note when $\lambda =n+1,$ (up to a constant scaling of the $\xi-$variable) $\Sigma_1$ and $\Sigma_2$ are the anti-canonical circle bundle of $\mathbb{B}^n$ , and $D_1$ and $D_2$ are the anti-canonical disc bundle. With $z=(z_1, \cdots, z_n),$ we define a local map $F$ by  $F(z, \xi)=(z_1, \cdots, z_{n-1}, \xi^{\frac{1}{\lambda}}, z^{\lambda}).$ Here we choose some local branches of $\xi^{\frac{1}{\lambda}}$ and $z^{\lambda}$ so that $F$ is a well-defined holomorphic map on some small open subset $U \subseteq \Om_1 \times \mathbb{C}$ with $U \cap \Sigma_1 \neq \emptyset.$ It is clear that $F$ maps $U \cap \Sigma_1$ to $\Sigma_2,$ while $F$ does not extend holomorphically to $D_1$ for $\lambda \neq 1$ (in particular for $\lambda =n+1$).
\end{example}

\begin{comment}
\begin{example}\label{e24}
	Let $\Omega=\Delta^2 \subseteq \mathbb{C}^2_{z}= \mathbb{C}_{z_1} \times \mathbb{C}_{z_2}$ and $\rho=(1-|z_1|^2)(1-|z_2|^2)^2$. Then $\rho$ is a generic norm of $\Delta^2.$ Let $D=D(\Omega, \rho)$ and $\Sigma=C(\Omega, \rho)$ be the disk and circle bundles as defined (\ref{eqndlh}) and (\ref{eqnclh}), respectively. Fix $a \in \Delta.$ Set 
	$$F=(\frac{a-z_1}{1-\overline{a}z_1}, z_2, \frac{(1-|a|^2)\xi}{(1-\overline{a}z_1)^2}).$$
	One can verify that $F$ is an automorphism of $D$, and maps $\Sigma$ to itself. However, $F$ is not linear fractional if $a \neq 0$.
\end{example}
\end{comment}

\begin{example}\label{e25}
	Let $\Omega=\Delta^2 \subseteq \mathbb{C}^2_{z}= \mathbb{C}_{z_1} \times \mathbb{C}_{z_2}$ and $\rho=(1-|z_1|^2)(1-|z_2|^2)$. Then $\rho$ is a generic norm of $\Delta^2.$ Let $D=D(\Omega, \rho)$ and $\Sigma=\Sigma(\Omega, \rho)$ be the disk and circle bundles as defined in (\ref{eqndlh}) and (\ref{eqnclh}), respectively. Fix $a, b \in \Delta.$ Set 
	$$F=(\frac{a-z_1}{1-\overline{a}z_1}, \frac{b-z_2}{1-\overline{b}z_2}, \frac{(1-|a|^2)^{\frac{1}{2}}(1-|b|^2)^{\frac{1}{2}}\xi}{(1-\overline{a}z_1)(1-\overline{b}z_2)}).$$
	One can verify that $F$ is an automorphism of $D$ and maps $\Sigma$ to itself. Note $F$ is not linear fractional if $ab \neq 0$.
	
\end{example}

\begin{example}\label{e26}
	Let $\Omega$ be an irreducible bounded symmetric domain of rank at least two, with generic norm $\rho$. Let $D=D(\Omega, \rho)$ be the associated disk bundle as in (\ref{eqndlh}).  In general, the automorphism of $\Omega$ is rational, but not linear fractional. Fix such an automorphism $\phi$. We will see in $\S$2.4 that $\phi$ induces an automorphism of $D$ that is not linear fractional. %(see Proposition \ref{prpnspauto}).
\end{example}

\subsection{Boundary structures of $D(\Om, \rho)$}

Let $\Om=\Om_1 \times \cdots \times \Om_l \subseteq \mathbb{C}^n_{z}$ be a bounded symmetric domain, where each $\Om_i, 1 \leq i \leq l,$ is an irreducible bounded symmetric domain in $\mathbb{C}^{n_i}_{z_i}$. Let $\rho$ be 
a generalized generic norm of $\Om$.  Write $D$ and $\Sigma$ for $D(\Om, \rho)$ and $\Sigma(\Om, \rho)$, respectively, where the latters are defined as  in Definition \ref{D2}.
It is well-known that, by Grauert's observation, $\Sigma$ is strongly pseudoconvex. We next briefly discuss the boundary geometry of $D$ at points in $\partial D \setminus \Sigma=\partial \Om \times \{0\}$.  We note if $\mathrm{rank}(\Om) \geq 2,$ then for a generic point $\wt{z} \in \partial \Om,$  there is a nontrivial complex variety contained in $\partial \Om$ that passes through $\wt{z}$. Consequently, a point in $\partial D \setminus \Sigma$ is either a nonsmooth boundary point, or a weakly pseudoconvex boundary point of $D$.

We next assume $\rho$ is a generic norm of $\Om$. Write
$\rho(z, \ol{z})=\prod_{i=1}^l (N_i(z_i, \ol{z_i}))^{k_i}$, where each $N_i$ is the generic norm of $\Om_i$ and each  $k_i \in \mathbb{N}.$ By Definition \ref{D2}, there exists some $1 \leq i^* \leq l$ such that $k_{i^*}=1.$  Write, for each $i$, $\mathrm{Reg}(\p \Om_{i})$ for the set of smooth boundary point of $\Om_i$, which is an open dense subset of $\partial \Om_i$(cf. \cite{MN}). Note the gradient $d N_i$ is everywhere nonzero on $\mathrm{Reg}(\p \Om_{i}),$ and $\Om_i$ is given by  $\{N_i(z_i, \ol{z_i})>0\}$ near every point in $\mathrm{Reg}(\p \Om_{i})$ (see Lemma 2 in Mok  \cite{M3}). Define the following subsets of $\partial \Om$ and $\partial D,$ respectively:
$$\mathcal{T}_{i^*}:=\{(z_1, \cdots, z_n) \in \partial \Om: z_{i^*} \in  \mathrm{Reg}(\p \Om_{i^*})~\text{and}~z_i \in \Om_i~\text{for}~i \neq i^*\};$$
$$\mathcal{S}_{i^*}:=\{(z_1, \cdots, z_n, 0) \in \partial D: (z_1, \cdots, z_n) \in \mathcal{T}_{i^*} \} \subseteq \partial D \setminus \Sigma.$$

%z_{i^*} \in  \mathrm{Reg}(\p \Om_{i^*})~\text{and}~z_i \in \Om_i~\text{for}~i \neq i^*\}.$$

We note  every point in $\mathcal{T}_{i^*}$ is a smooth boundary point of $\Om.$ Moreover, recall $\rho=\prod_{i=1}^l N_i^{k_i}$ with $k_{i^*}=1;$ $N_{i^*}=0, d N_{i^*} \neq 0$ on $\mathrm{Reg}(\p \Om_{i^*})$; and $N_i>0$ in $\Om_i$ for all $1 \leq i \leq l$. It follows that the gradient $d \rho$ is everywhere nonzero on $\mathcal{T}_{i^*},$ and $\Om$ is given by  $\{\rho(z, \ol{z})>0\}$ near every $z^* \in \mathcal{T}_{i^*}$.     We next observe:

\begin{lemma}\label{lmstom}
Let $\mathrm{rank}(\Om) \geq 2$ (and $\rho$ be a generic norm of $\Om$ as above). Then every point $p$ in $\mc{S}_{i^*}$ is a smooth weakly pseudoconvex boundary point of $D,$ and $\partial D$ is minimal at $p$.
\end{lemma}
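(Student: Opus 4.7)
My plan is to work with the local defining function $r = \rho(z,\Ol{z}) - |\xi|^2$ for $\p D$ near $p=(z^*,0)$, factor $\rho = N_{i^*}\cdot Q$ with $Q:=\prod_{j\neq i^*}N_j^{k_j}$, and verify the three assertions (smoothness, weak pseudoconvexity, minimality) in turn. Smoothness follows from the hypothesis $k_{i^*}=1$: the product rule gives $d\rho|_{z^*} = Q(z^*_{\mathrm{rest}})\,dN_{i^*}|_{z^*_{i^*}}$, which is nonzero because $Q>0$ near $z^*_{\mathrm{rest}}$ and $dN_{i^*}|_{z^*_{i^*}}\neq 0$ (since $z^*_{i^*}\in \mathrm{Reg}(\p \Om_{i^*})$), so $dr|_p=d\rho|_{z^*}\neq 0$ and $\p D$ is smooth at $p$.

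For the Levi form, I will first observe that $D$ is pseudoconvex: $-\log r$ is strictly plurisubharmonic on $D$ because $\sqrt{-1}\p\Ol{\p}(-\log r)$ is precisely the positive K\"ahler form $\om_D$ of (\ref{eqnmetric}); so the Levi form at the smooth boundary point $p$ is automatically positive semidefinite. What remains is to produce a nonzero null vector. A direct computation shows that at $z^*$ the block of the Hessian of $\rho$ corresponding to the $z_j$ with $j\neq i^*$ vanishes (each such entry carries a factor of $N_{i^*}(z^*_{i^*})=0$); hence in the reducible case $l\geq 2$ any nonzero vector of the form $(0,w_{\mathrm{rest}},0)$ already lies in $T^{1,0}_p\p D$ and has zero Levi form. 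In the remaining irreducible case $l=1$ with $\mathrm{rank}(\Om_{i^*})\geq 2$, I will invoke the standard structure result that the smooth part of $\p\Om_{i^*}$ carries nontrivial maximal holomorphic arc components, whose holomorphic tangent at $z^*_{i^*}$ supplies the desired null direction. Either way, $p$ is weakly but not strongly pseudoconvex.

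For minimality the plan is a proof by contradiction. Suppose a complex submanifold $V\subseteq\p D$ of complex dimension $n$ passes through $p$. The key observation is that the complex tangent condition $\sum_j (N_{i^*})_{z_j}(z^*_{i^*})\,w_j = 0$ forces the projection $\pi_z(T^{1,0}_p V)\subseteq\CC^n$ to lie in a fixed $(n-1)$-dimensional hyperplane, so the holomorphic map $\pi_z|_V:V\to\CC^n$ has rank at most $n-1$ at $p$. I will then analyze the fiber $F := V\cap (\{z^*\}\times \CC)$ in two cases. If $F$ has positive dimension at $p$, then as an analytic subset of the complex line $\{z^*\}\times\CC$ it is locally the whole line; but every point $(z^*,\xi)\in\p D$ satisfies $|\xi|^2=\rho(z^*,\Ol{z^*})=0$, forcing $\xi=0$ -- contradiction. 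If instead $F$ is $0$-dimensional at $p$, then $\pi_z|_V$ is finite there, hence open there by the open mapping theorem for nonconstant holomorphic maps between equidimensional complex manifolds; so $\pi_z(V)$ contains a full neighborhood of $z^*$ in $\CC^n$, and $V\subseteq\p D$ then forces $\rho(z,\Ol{z})\geq 0$ on that whole neighborhood. I expect this second case to be the main obstacle; closing it leans once more on $k_{i^*}=1$: because $N_{i^*}(z^*_{i^*})=0$ and $dN_{i^*}|_{z^*_{i^*}}\neq 0$, the function $N_{i^*}$ (and hence $\rho=N_{i^*}Q$, with $Q>0$ nearby) takes both signs in every neighborhood of $z^*$, yielding the final contradiction.
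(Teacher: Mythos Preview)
Your proof is correct. The smoothness argument matches the paper's exactly, and your weak pseudoconvexity argument (pseudoconvexity of $D$ via the psh potential $-\log r$, plus an explicit Levi-null direction coming from the factor $N_{i^*}$ vanishing) is essentially the paper's reasoning made explicit --- the paper simply cites the remark that $\p D\setminus\Sigma$ consists of non-smooth or weakly pseudoconvex points because a nontrivial complex variety in $\p\Om$ passes through each such point.

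For minimality, however, you take a genuinely different route. The paper's argument is a one-line dimension count: $\p D\setminus\Sigma=\p\Om\times\{0\}$ has real codimension $3$ in $\CC^{n+1}$, so any germ of complex hypersurface $X\subseteq\p D$ through $p$ cannot stay inside $\p D\setminus\Sigma$ and must meet $\Sigma$; but $\Sigma$ is strongly pseudoconvex, so it contains no positive-dimensional complex variety --- contradiction. Your argument instead projects $V$ to the base via $\pi_z$, splits on the dimension of the fiber over $z^*$, and in the finite-fiber case uses openness of finite equidimensional holomorphic maps together with the sign-change of $\rho=N_{i^*}Q$ near $z^*$. Both are valid; the paper's proof is shorter and exploits only the global fact that the ``bad'' boundary stratum is thin and the rest is strongly pseudoconvex, while yours is a purely local computation that avoids any appeal to the strong pseudoconvexity of $\Sigma$. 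A minor stylistic remark: the rank-$\le n-1$ observation you record for $d\pi_z|_V$ is correct but not actually used in either case of your dichotomy; and the openness you invoke in Case~2 is the open mapping theorem for \emph{finite} (discrete-fiber) holomorphic maps between equidimensional manifolds, not merely nonconstant ones --- you apply it correctly, but the name you give it is slightly loose.
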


\begin{proof}
Fix $p=(z^*, 0) \in \mathcal{S}_{i^*} \subseteq \partial D.$ As $\rho(z, \ol{z}) \leq 0$ for $z \not \in \Om$ near $z^*$, we have $D$ is locally defined by $r(z, \xi, \ol{z}, \ol{\xi}):=|\xi|^2-\rho(z, \Ol{z})<0$ near $p$. Furthermore, $\frac{\partial r}{\partial z_i}(p)=-\frac{\partial \rho}{\partial z_i}(z^*).$ Since the gradient of $\rho$ is nonzero at $z^*$, so is the gradient of $r$ at $p$. Hence $r$ is a smooth defining of $D$ at $p$, and  $p$ is  a smooth boundary point of $D.$ The weakly pseudoconvexity of $\partial D$ at $p$ follows from the discussion at the beginning of $\S$2.2. Suppose $\partial D$ is not minimal at $p$. Then there is a germ of complex hypersurface $X$ of $\mathbb{C}^{n+1}$ contained in $\partial D$ and passes through $p$. But $\partial D \setminus \Sigma$ is of real codimension $3$ in $\mathbb{C}^{n+1}$. Hence $X$ must intersect $\Sigma.$ This is impossible as $\Sigma$ is strongly pseudoconvex. Therefore 
$\partial D$ must be minimal at $p$.
\end{proof}

To conclude this subsection, we give a proof of Proposition \ref{prpnflat}.

\smallskip

{\bf Proof of Proposition \ref{prpnflat}.} 
First note the metric $\om_{\Om}$ is a product of \ke metrics, and thus has constant Ricci eigenvalues. Here the latter refers to the eigenvalues of the Ricci tensor with respect to the metric (see \cite{EXX2} for more details). It then follows from Theorem 1.1 in \cite{EXX2} that $\Sigma:=\Sigma(\Om, \rho)$ is obstruction flat.
To prove the Bergman logarithmic flatness, we first note by Theorem 2.5 in Ahn-Park \cite{AP}, the Bergman kernel $K$ of $D:=D(\Om, \rho)$ is rational. Now fix any point $p \in \Sigma$ (which is a strongly pseudoconvex point).
We can find  a smoothly bounded strongly pseudoconvex domain $D' \subseteq D$ such that $D' \cap O=D \cap O$ and $M_0:=\partial D' \cap O=\partial D \cap O  \subseteq \Sigma$ for some small ball $O$ in $\mathbb{C}^{n+1}$ centered at $p$.

Write $K'$ for the Bergman kernel of $D'$, and recall $D$ is pseudoconvex.  Then by the localization of the Bergman kernel on pseudoconvex domains at a strongly pseudoconvex boundary point (cf. Theorem 4.2 in \cite{En}), there is a smooth function $\Phi$ in a neighborhood of $D' \cup M_0$ such that 
%\begin{equation}\label{eqnphi}
$K=K'+\Phi~\text{on}~D'.$
%\end{equation}
On the other hand, for any defining function $\rho$ of $D'$ with $D'=\{\rho>0\}$, the Bergman kernel $K'$ of $D'$ has the  Fefferman expansion (see \cite{Fe1}) for some
functions $\phi, \psi \in C^{\infty}(\ol{D'}):$ 
\begin{equation}
K'=\frac{\phi}{\rho^{n+1}}+ \psi \log \rho \quad \text{on}~  D'.
\end{equation}
Consequently, the Bergman kernel $K$ of $D$ has the following expansion:
\begin{equation}\label{eqnkz}
	K=\frac{\hat{\phi}}{\rho^{n+1}}+ \psi \log \rho \quad \text{on}~  D'.
\end{equation}
Here $\hat{\phi}$ is some smooth function in a neighborhood of $D' \cup M_0$. Then using the rationality of $K$ and applying the argument in \cite{EXX1} (see Step 5 in $\S 5$ there), we see $\psi$ vanishes to the infinite order along $M_0$. As $p \in \Sigma$ is arbitrary,  $\Sigma$ is thus Bergman logarithmically flat. \qed

\subsection{A canonical \k metric on $D$}

Let $\Om=\Om_1 \times \cdots \times \Om_l$ be a (possibly reducible) bounded symmetric domain in $\mathbb{C}^{n}$, where each $\Om_i, 1 \leq i \leq l,$ is an irreducible bounded symmetric domain in $\mathbb{C}_{z_i}^{n_i}$. Write $z=(z_1, \cdots, z_l)$ for the coordinates in $\mathbb{C}^n.$ Let 
$\rho(z, \Ol{z})=\prod_{i=1}^l \big(N_i(z_i, \Ol{z_i})\big)^{\la_i}$, where each $N_i$ is the generic norm of $\Om_i$ and each  $\la_i$ is a positive real number (not necessarily integers). Let $D=D(\Om, \rho) \subseteq \mathbb{C}_{(z, \xi)}^{n+1}, \Sigma=\Sigma(\Om, \rho) \subseteq \mathbb{C}_{(z, \xi)}^{n+1}$ be as defined in (\ref{eqndlh}) and (\ref{eqnclh}), respectively.
In this subsection, we discuss a metric defined on $D$ given by $\om_D:=-\sqrt{-1} \partial \overline{\partial} \log \big(\rho(z, \overline{z})- |\xi|^2 \big)$. %We will show $\om_D$ is a canonical complete \k metric on $D$.

%The Hermitian form $-\sqrt{-1} \partial \overline{\partial} \log \big(p(z, \overline{z})- |w|^2 q(z, \overline{z}) \big)$ is positive definite on $\Omega$ if and only 
%$-\sqrt{-1} \partial \overline{\partial} \log p(z, \overline{z})$ is positive definite on $D$. The Hermitian form $-\sqrt{-1} \partial \overline{\partial} \log \big(p(z, \overline{z})- |w|^2 q(z, \overline{z}) \big)$ induces a complete metric on $\Omega$ if and only 
%$-\sqrt{-1} \partial \overline{\partial} \log p(z, \overline{z})$ induces a complete metric on $D$. See Proposition \ref{prpnmetric}.

%%%%%%%%%%%%%%%%%%%%%%%%%%%

\begin{proposition}\label{prpnmetric}
It holds that $\om_D$ is a complete \k metric on $D$. Moreover, it is a canonical metric of $D$ in the sense that $\Phi^*(\om_D)=\om_D$ for every automorphism $\Phi$ of $D$.
%The following two statements hold: \\
	
%(a). The \k form $\hat{w}:=-\sqrt{-1} \partial \overline{\partial} \log \big(p(z, \overline{z})- |w|^2 q(z, \overline{z}) \big)$ is positive definite (respectively, semi-positive definite) on $\Omega$ if and only if
%$-\sqrt{-1} \partial \overline{\partial} \log p(z, \overline{z})$ is positive definite (respectively, semi-positive definite) on $D$. \\
	
%(b). The form $\hat{\omega}$ gives a complete metric on $\Omega$ if and only if $-\sqrt{-1} \partial \overline{\partial} \log p(z, \overline{z})$ gives a complete metric on $D$ .
\end{proposition}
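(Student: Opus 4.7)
The plan is to establish positivity, completeness, and automorphism-invariance of $\omega_D$ in turn.

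For positivity, I would write the Hermitian matrix $H=(\phi_{\alpha\bar\beta})$ of the potential $\phi=-\log r$, with $r=\rho-|\xi|^2$, in coordinates $(z_1,\dots,z_n,\xi)$ in the block form
\begin{equation*}
H=\begin{pmatrix} A & B \\ B^{*} & c\end{pmatrix},\qquad A_{i\bar j}=-\frac{\rho_{z_i\bar z_j}}{r}+\frac{\rho_{z_i}\rho_{\bar z_j}}{r^2},\qquad B_i=-\frac{\xi\,\rho_{z_i}}{r^2},\qquad c=\frac{\rho}{r^2}.
\end{equation*}
Since $\rho>0$ on $\Omega$, we have $c>0$; by the Schur complement criterion, positivity of $H$ reduces to positivity of $A-Bc^{-1}B^{*}$. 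A short algebraic simplification using $r=\rho-|\xi|^2$ collapses the Schur complement to
\begin{equation*}
A-Bc^{-1}B^{*}=\frac{\rho}{r}\left(-\frac{\rho_{z_i\bar z_j}}{\rho}+\frac{\rho_{z_i}\rho_{\bar z_j}}{\rho^2}\right)=\frac{\rho}{r}\,(\omega_\Omega)_{i\bar j},
\end{equation*}
i.e., a positive scalar multiple of the Hermitian matrix of $\omega_\Omega=-\sqrt{-1}\partial\bar\partial\log\rho$. Since $\omega_\Omega=\sum_i\lambda_i\,\omega_{\Omega_i}$ is a positive sum of (multiples of) the Bergman-K\"ahler-Einstein metrics on the irreducible factors $\Omega_i$, it is positive definite on $\Omega$, and hence $\omega_D$ is a positive definite K\"ahler form on $D$.

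For completeness, the same block computation yields the pointwise lower bound $\omega_D\ge\pi^{*}\omega_\Omega$ on tangent vectors, where $\pi\colon D\to\Omega$ is the projection $(z,\xi)\mapsto z$. Since $\omega_\Omega$ is complete on $\Omega$ (being a positive sum of complete K\"ahler metrics on the irreducible factors), any curve in $D$ whose $z$-projection exits every compact subset of $\Omega$ has infinite $\omega_D$-length. For a curve $(z(t),\xi(t))$ whose $z$-projection stays in a compact $K\subset\Omega$ while $|\xi(t)|^2\to\rho(z(t),\bar z(t))$, I would use the remaining fiber term $c\,|\dot\xi+c^{-1}B\dot z|^2=(\rho/r^2)\,|\dot\xi+c^{-1}B\dot z|^2$ from the block decomposition: the coefficient $\rho/r^2$ blows up as $r\to 0$, and the restriction of $\omega_D$ to each fiber $\{z\}\times\mathbb{C}\cap D$ is the Poincar\'e metric on the disk of radius $\sqrt{\rho(z,\bar z)}$. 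Concretely, the function $\eta=-\log\!\bigl(1-|\xi|^2/\rho\bigr)$ tends to $\infty$ as $(z,\xi)\to\Sigma$ and has uniformly bounded $\omega_D$-gradient on $\pi^{-1}(K)$, which forces such a curve to have infinite length. Combining the two cases yields completeness.

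For invariance, I would invoke the description of $\mathrm{Aut}\bigl(D(\Omega,\rho)\bigr)$ to be carried out in $\S$2.4: apart from the exceptional rank-one case (where $\Omega=\mathbb{B}^n$ and $\rho=1-\|z\|^2$, so $D=\mathbb{B}^{n+1}$ and $\omega_D$ is a constant multiple of the Bergman metric, hence automatically invariant), every automorphism $\Phi$ of $D$ has the form $\Phi(z,\xi)=(\psi(z),\phi(z)\xi)$ with $\psi\in\mathrm{Aut}(\Omega)$ and $\phi$ a nowhere-vanishing holomorphic function on $\Omega$ satisfying $\rho(\psi(z),\overline{\psi(z)})=|\phi(z)|^2\,\rho(z,\bar z)$. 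Under this form a direct substitution gives $\Phi^{*}r=|\phi(z)|^2\,r$, hence $-\log\Phi^{*}r=-\log r-\log|\phi(z)|^2$; since $\phi$ is nowhere-vanishing holomorphic on $\Omega$, $\log|\phi|^2$ is pluriharmonic, so $\partial\bar\partial\log|\phi|^2=0$ and $\Phi^{*}\omega_D=\omega_D$. The main obstacle is the completeness claim along $\Sigma$: because $\partial D$ is not smooth along $\partial\Omega\times\{0\}$ and can have weakly pseudoconvex points, standard completeness theorems for $-\log r$-type K\"ahler metrics on strongly pseudoconvex domains do not apply directly, and one must leverage the bundle structure together with the Poincar\'e-type fiber metric; positivity and invariance, by contrast, reduce to short explicit computations once the block decomposition is in hand and $\mathrm{Aut}(D)$ is described as in $\S$2.4.
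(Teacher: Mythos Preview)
Your proposal is correct and, for completeness and invariance, follows essentially the same route as the paper: split according to whether the base projection leaves every compact set, use $\omega_D\ge\pi^*\omega_\Omega$ in the first case, and in the second case use that $\eta=-\log(1-|\xi|^2/\rho)$ has bounded gradient (the paper writes $\psi=|\xi|^2\rho^{-1}-1$, $G=-\log(-\psi)$, and invokes Cheng--Yau's estimate $|\nabla G|_{\omega_0}\le 1$ with $\omega_0=\sqrt{-1}\,\partial\bar\partial G$); invariance is likewise deferred to the automorphism description in \S2.4, with the rank-one case handled separately exactly as you do.

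The genuine difference is in the positivity step. You compute the Schur complement of the Hessian of $-\log r$ and identify it with $(\rho/r)\,(\omega_\Omega)_{i\bar j}$, which is clean and elementary. The paper instead uses the multiplicative splitting $r=-\rho\,\psi$ to write
\[
\omega_D=-\sqrt{-1}\,\partial\bar\partial\log\rho-\sqrt{-1}\,\partial\bar\partial\log(-\psi)=\pi^*\omega_\Omega+\omega_0,
\]
obtaining positivity on $\{\xi\neq 0\}$ from Grauert's observation that $\psi$ is strictly plurisubharmonic there, and checking $\xi=0$ by hand. The advantage of the paper's decomposition is that the two inequalities $\omega_D\ge\pi^*\omega_\Omega$ and $\omega_D\ge\omega_0$ needed for the completeness argument fall out immediately; in your approach the first inequality is visible from the block form, but to justify ``$\eta$ has uniformly bounded $\omega_D$-gradient'' you will still want the second inequality, which in turn is most easily seen from exactly this multiplicative splitting. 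Your Schur-complement computation, on the other hand, gives positivity in one stroke without separating the $\xi=0$ locus and makes the relation to $\omega_\Omega$ completely explicit.
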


\begin{proof}
%We first establish the following lemma.
%\begin{lemma}
%Write $\psi=\frac{1}{\phi}$ and $H:=|w|^2 \psi-1$. Then $-\log(-H)-H$ is plurisubharmonic on $\Omega.$ Moreover, $H$ is strictly plurisubharmonic on $(\mathbb{C} \setminus \{w=0\}) \times D,$ and thus plurisubharmonic on $\mathbb{C} \times D$ (both $-\sqrt{-1} \partial \overline{\partial} \log (H)$ and $-\sqrt{-1} \partial \overline{\partial} H$ are degenerate when $w=0.$).
%\end{lemma}

We first verify $\om_D$ is a (positive definite) \k form on $D$. Write $\psi:=|\xi|^2 \rho^{-1}-1$. Since $(L:=\Om \times \mathbb{C}, \rho^{-1})$ is a negative line bundle, a well-known fact (usually referred as Grauert's observation) asserts that  $\psi$ is plurisubharmonic on $L$. Moreover, $\psi$ is strictly plurisubharmonic on $\{(z, \xi) \in \Om \times \mathbb{C}: \xi \neq 0\}.$ Note $f(x)=-\log (-x)$ is an increasing convex function on $(-1, 0).$ Consequently, $-\log (-\psi)$ is strictly plurisubharmonic on $D^*=\{(z, \xi) \in D: \xi \neq 0\},$ and plurisubharmonic on $D$. Writing $r=\rho(z, \overline{z})-|\xi|^2$, we have

\begin{equation}\label{eqnhom}
\om_D=-\sqrt{-1} \partial \overline{\partial} \log r =-\sqrt{-1} \partial \overline{\partial} \log \rho -\sqrt{-1} \partial \overline{\partial} \log  (-\psi).
\end{equation}
Since $-\sqrt{-1} \partial \overline{\partial} \log \rho$ and  $-\sqrt{-1} \partial \overline{\partial} \log (-\psi)$  are both semi-positive definite as Hermitian forms in $D$, we have:
\begin{equation}\label{eqnine}
\om_D \geq -\sqrt{-1} \partial \overline{\partial} \log  (-\psi); \quad \om_D \geq -\sqrt{-1} \partial \overline{\partial} \log  \rho.
\end{equation}
It then follows immediately that $\om_D$ is a positive definite form on $D^*$. To show $\om_D$ is positive definite on $\Om \times \{0\}$,
we identify $\xi$ with $z_{n+1}$ and write the coordinates for $\Omega$ as $Z=(z, \xi)=(z, z_{n+1})=(z_1, \cdots, z_n, z_{n+1}),$ 
%Set $\Phi(x)=-\log (-x)$ for $x <0.$ Then $-\log(-H)=\Phi(H)$. Note for $0 \leq i, j \leq n,$ we have
%$$(-\log(-H))_{i  \overline{j}}=\frac{1}{-H} H_{i \overline{j}} + \frac{1}{H^2} H_i H_{\overline{j}}.$$
%Note on $\Omega,$  $-H$ takes values in $(0, 1].$ Consequently, we have the matrix $((-\log(-H))_{i  \overline{j}}-H_{i \overline{j}})_{0 \leq i, j \leq n}$ is semi-positive definite at every point in $\Omega.$ Consequently, $-\log(-H)-H$ is plurisubharmonic on $\Omega.$
%For the proof of the later part of the lemma, see the Hang's note "Bergman Kernels on Hermitian Symmetric domains".
%By Lemma 1.2, $-\sqrt{-1} \partial \overline{\partial} \log  (-H)$ is  positive definite on $(\mathbb{C} \setminus \{w=0\}) \times D$. Therefore $\hat{\omega}$ is positive definite on $(\mathbb{C} \setminus \{w=0\}) \times D$. To verify $\hat{\omega}$ is positive definite at points on $\{w=0\} \times D.$ We compute, writing $\Phi=p(z, \overline{z})- |w|^2 q(z, \overline{z}),$
and fix a point $Z^*=(z^*, 0) \in \Om \times \{0\}.$
A direct computation yields
\begin{equation}\label{eqnphiij}
\left( (-\log r)_{i \overline{j}}\right)_{1 \leq i, j \leq n+1}|_{Z^*}=  \left( (-\log \rho)_{i \overline{j}} \right)_{1 \leq i, j \leq n}|_{z^*} \oplus \big(\frac{1}{\rho(z^*, \Ol{z^*})}\big).
\end{equation}
Then the positive definiteness of $\om_D$ at $Z^*$ follows easily from the above equation.
	
%The converse statement is also easy to see from the above equation.
%We now prove part (b). From the assumption and part (a), we see $\hat{\omega}$ is a metric on $\Omega$. 
We next prove the completeness of $\om_D$ on $D$. By the Hopf-Rinow theorem, it suffices to show  that $(\Omega, \om_D)$ is geodesically complete.
Let $\gamma: [0,a) \rightarrow \Omega$ be a  non-extendible geodesic in $D$ of unit speed with respect to $\omega_D$. We will need to show that $a=+\infty,$ equivalently that $\gamma$ has infinite length with respect to $\omega_D$. Write $\om_{\Om}$ for the (complete) metric induced by $-c_1(L, \rho^{-1})$, i.e., $\om_{\Om}=-\sqrt{-1} \partial \overline{\partial} \log  \rho$ on $\Om$; and write  $\pi$ for the natural projection from $L=\Om \times \mathbb{C}$ to $\Om$. We first observe, by the second inequality in (\ref{eqnine}), $\om_D \geq \pi^{*}(\om_{\Om})$. Next we write $\hat{\gamma}=\pi(\gamma)$ for the projection of $\gamma$ to $\Om$ and proceed in two cases:

%Seeking a contradiction, we assume $a < \infty.$  Then in the topology of $\mathbb{C}^{n+1}$ (where $L=\Om \times \mathbb{C} \subseteq \mathbb{C}^{n+1}$), for every convergent sequence
%$\gamma(t_i)$   with $t_i \rightarrow a$ we must have $\gamma(t_i) \rightarrow q \in \partial \Omega.$ Write $\mathcal{Q}$ for the collection of such limit points $q$. Then $\mathcal{Q} \subseteq \partial \Omega.$

{\bf Case I:} Assume $\hat{\gamma}$ is not contained in any compact subset of $\Om$. In this case, since $\om_{\Om}$ is complete on $\Om$, the length $L_{\om_{\Om}}(\hat{\gamma})$ of $\hat{\gamma}$ under the metric $\om_{\Om}$ equals $+\infty.$  Since $\om_D \geq \pi^{*}(\om_{\Om})$,  the length $L_{\om_D}(\gamma)$ of $\gamma$ under $\om_D$ satisfies $L_{\om_D}(\gamma) \geq  L_{\om_{\Om}}(\hat{\gamma}),$ and thus equals $+\infty.$ %Hence $a=+\infty.$
	
{\bf Case II:} Assume $\hat{\gamma}$ is contained in some compact subset $K$ of $M$. To seek a contradiction, we suppose $a<+\infty$.  Then for any sequence  $\{t_i\}_{i \geq 1}$ with $t_i \rightarrow a$, we must have $|\gamma(t_i)|_h \rightarrow 1.$ Otherwise, by passing to a subsequence, $\gamma(t_i)$ converges in $D(L),$ which contradicts the assumption that $\gamma$ is non-extendible. 
Then we can find some small $\epsilon >0$ and some $0 < t_0 <a $ such that $\gamma([t_0, a)) \subseteq W_{\epsilon}:=\{Z=(z, \xi) \in D: z \in \Om, |\xi| > \epsilon \}$ (where $|\cdot|$ is usual Euclidean norm). %Note $\phi=(-\psi) \mu$ for some positive smooth function $\mu$ in a neighborhood of $\Ol{W_{\epsilon}}.$ 
Recall by Cheng-Yau (see page 509 in \cite{CY}), $G:=-\log (-\psi)$ has bounded gradient with respect to $\omega_0=\sqrt{-1}\partial \Ol{\partial} G$. More precisely, $|\nabla G|_{\omega_0} \leq 1$ in $W_{\epsilon}$.
Here the gradient is taken with respect to $\omega_0.$ This in particular implies the length of $\gamma([t_0, a))$ with respect to $\omega_0$  equals $\infty$ (cf. the argument right before Remark 2.15 in \cite{EXX2}).  Note by the first equation in (\ref{eqnine}), we have  $\om_D \geq \om_0$. Consequently, the length of $\gamma$ with respect to $\omega_D$ also  equals $\infty$ and thus $a=+\infty,$ a plain contradiction. 

In any case, $a=+\infty$. This proves the completeness of $\om_D.$ Finally it will follow from part (3) of Proposition \ref{prpnauto} that $\om_D$ is a canonical metric on $D$. This finishes the proof. %of Proposition \ref{prpnmetric}.
%Next we choose a sequence $\{t_i\}_{i \geq 1}$ such that all $t_i \geq t_0, t_i \rightarrow a,$ and $\gamma(t_i) \in O_{i}:=\{z \in W_{\e}: \Psi(z)>2^i\}.$ Moreover, we can find some $0 < t_i^* <t_i$ such that  $\gamma((t_i^*, t_i]) \subset  O_{i-1}$ and in particular $\Psi (\gamma(t_i^*))=2^{i-1}.$ Then we have
%$$\int_{0}^{t_i} |\gamma'(t)|_{\omega} dt \geq \int_{t_i^*}^{t_i} |\gamma'(t)|_{\omega} dt \geq   \int_{t_i^*}^{t_i} |\gamma'(t)|_{\omega_0} dt \geq \int_{t_i^*}^{t_i} |\nabla \Psi|_{\omega_0} |\gamma'(t)|_{\omega_0} dt  $$ 
%$$\geq \int_{t_i^*}^{t_i} \langle \nabla \Psi, \gamma'(t) \rangle_{\omega_0} dt=  \int_{t_i^*}^{t_i}  \frac{d}{dt} \big(\Psi \circ \gamma(t)\big) dt= \Psi \circ \gamma(t_i)-\Psi \circ \gamma (t_i^*)> 2^{i-1}.$$
%This proves $\gamma$ has infinite length with respect to $\om_D$ and thus $a=+\infty,$ a plain contradiction.
\end{proof}

%%%%%%%%%%%%%%%%%%%%%%%%%%%%%%%%%%%%%%

\subsection{Automorphisms of the disk bundle $D(\Om, \rho)$}

In this section, we give a description of the automorphism of the disk bundle over bounded symmetric domains. We first include the following fact which is well-known to experts (cf. \cite{WYZR}), and we sketch a proof here for the convenience of the readers.

\begin{proposition}
Let $\Omega$ be an irreducible bounded symmetric domain of genus $\gamma.$ Write $N$ for the generic norm of $\Omega.$ Let $z^* \in \Omega$
and $\phi$ an automorphism of $\Om$ with $\phi(z^*)=0.$ Write $J\phi$ for the Jacobian matrix of $\phi,$ then $N(z, \Ol{z^*})$ is nowhere zero in $\Om$ and the following hold:
\begin{equation}\label{eqngnp}
\mathrm{det}(J \phi(z))=e^{i\theta}\frac{N^{\frac{\gamma}{2}}(z^*, \Ol{z^*})}{N^{\gamma}(z, \Ol{z^*})}~\text{for some}~\theta \in [0, 2\pi); \quad \frac{N(\phi(z),  \Ol{\phi(z)})}{N(z, \Ol{z})}=\frac{N(z^*, \Ol{z^*})}{|N(z, \Ol{z^*})|^2}.
\end{equation}

\end{proposition}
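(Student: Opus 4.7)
The plan is to deduce both identities from the transformation law for the Bergman kernel of $\Omega$ combined with the defining relation between the Bergman kernel and the generic norm. From the definition recalled in $\S 1$, $V_{\Omega}K(z,\overline{z}) = N(z,\overline{z})^{-\gamma}$, and since $N(z,w)\in\mathbb{C}[z,w]$ is the complexification of the Hermitian polynomial $N(z,\overline{z})$, polarization gives
\begin{equation*}
K(z,\overline{w}) = V_{\Omega}^{-1}\,N(z,\overline{w})^{-\gamma}\qquad\text{for all } z,w\in\Omega.
\end{equation*}
For the automorphism $\phi$, the standard transformation law of the Bergman kernel reads
\begin{equation*}
K(\phi(z),\overline{\phi(w)})\,\det J\phi(z)\,\overline{\det J\phi(w)} \;=\; K(z,\overline{w}).
\end{equation*}

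Now I would specialize $w=z^*$. Since $\phi(z^*)=0$ and $N(\,\cdot\,,0)\equiv 1$, we have $K(\phi(z),\overline{\phi(z^*)}) = V_{\Omega}^{-1}$, so the transformation law collapses to
\begin{equation*}
\det J\phi(z)\,\overline{\det J\phi(z^*)} \;=\; N(z,\overline{z^*})^{-\gamma}.
\end{equation*}
Because $\phi$ is a biholomorphism, the left-hand side is a nowhere-vanishing holomorphic function of $z$ on $\Omega$; this immediately yields the non-vanishing of $N(z,\overline{z^*})$ on $\Omega$. Setting $z=z^*$ gives $|\det J\phi(z^*)|^2 = N(z^*,\overline{z^*})^{-\gamma}$, hence $\overline{\det J\phi(z^*)} = e^{-i\theta}N(z^*,\overline{z^*})^{-\gamma/2}$ for some $\theta\in[0,2\pi)$. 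Substituting this into the displayed identity and solving for $\det J\phi(z)$ produces the first formula in \eqref{eqngnp}.

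For the second identity, I would take $w=z$ in the Bergman transformation law to get
\begin{equation*}
|\det J\phi(z)|^2 \;=\; \frac{K(z,\overline{z})}{K(\phi(z),\overline{\phi(z)})} \;=\; \frac{N(\phi(z),\overline{\phi(z)})^{\gamma}}{N(z,\overline{z})^{\gamma}}.
\end{equation*}
On the other hand, squaring the modulus of the first formula gives $|\det J\phi(z)|^2 = N(z^*,\overline{z^*})^{\gamma}\,|N(z,\overline{z^*})|^{-2\gamma}$. Equating the two expressions for $|\det J\phi(z)|^2$ and taking positive $\gamma$-th roots (both sides are positive real numbers on $\Omega$) yields the second formula.

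The argument is essentially bookkeeping once the Bergman-kernel identity $K = V_\Omega^{-1}N^{-\gamma}$ and the Bergman transformation law are in place; no real obstacle arises. The only delicate point is the polarization step — making sure that $K(z,\overline{w})=V_\Omega^{-1}N(z,\overline{w})^{-\gamma}$ as a genuine identity of holomorphic/antiholomorphic functions, rather than only on the diagonal — but this is standard because $N(z,\overline{w})$ is a polynomial and both sides are real-analytic extensions of the same function.
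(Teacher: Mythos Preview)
Your proof is correct and follows essentially the same approach as the paper: both use the Bergman transformation law together with the identity $K = V_\Omega^{-1} N^{-\gamma}$, specialize one variable to $z^*$ (using $N(\cdot,0)\equiv 1$) to obtain $\det J\phi(z)\,\overline{\det J\phi(z^*)}\,N^\gamma(z,\overline{z^*})=1$, evaluate at $z=z^*$ to pin down $|\det J\phi(z^*)|$, and then combine with the diagonal identity to extract both formulas. The only cosmetic difference is that the paper first writes the diagonal transformation law and then complexifies, whereas you invoke the off-diagonal Bergman transformation law directly; these amount to the same thing.
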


\begin{proof}
Note the Bergman kernel of $\Omega$ equals
$K_\Omega(z, \Ol{z})=\frac{A}{N^{\gamma}(z, \Ol{z})}.$
Here $A$ is some constant depending on $\Omega$. Let $\phi$ be an automorphism of $\Omega$.   By the transformation law of the Bergman kernel, we have $|\mathrm{det}(J\phi(z)|^2K_\Omega(\phi(z), \Ol{\phi(z)})=K_\Omega(z, \Ol{z})$ with $z \in \Omega.$ This yields
\begin{equation}\label{eqnjzo}
|\mathrm{det}(J\phi(z)|^2 N^{\gamma}(z, \Ol{z})=N^{\gamma}(\phi(z), \Ol{\phi(z)})~\text{with}~z \in \Omega.
\end{equation}
We complexify the above equation to get
\begin{equation}\label{eqnjzxi}
\mathrm{det}(J\phi(z)) \Ol{\mathrm{det}(J\phi(w))} N^{\gamma}(z, \Ol{w})= N^{\gamma}(\phi(z), \Ol{\phi(w)})~\text{with}~z, w \in \Omega.
\end{equation}
We let $z=z^*$ in (\ref{eqnjzo}) to obtain
$|\mathrm{det}(J\phi(z^*)|^2=N^{-\gamma}(z^*, \Ol{z^*}).$
We then let $w=z^*$ in (\ref{eqnjzxi}) to obtain
$\mathrm{det}(J\phi(z)) \Ol{\mathrm{det}(J\phi(z^*))} N^{\gamma}(z, \Ol{z^*})=1$ (recall $N(\cdot , 0) \equiv 1$).
We see  $N(z, \Ol{z^*})$ is nowhere zero in $\Om$, and we combine the latter two equations to get
\begin{equation}\label{eqnjph}
|\mathrm{det}(J\phi(z))|^2=|\mathrm{det}(J\phi(z^*))|^{-2}|N(z, \Ol{z^*})|^{-2\gamma}=\frac{N^{\gamma}(z^*, \Ol{z^*})}{|N(z, \Ol{z^*})|^{2\gamma}}.
\end{equation}
Since both $\mathrm{det}(J\phi(z))$ and $N(z, \Ol{z^*})$ are holomorphic in $\Om,$ we have the first equation in (\ref{eqngnp}) holds.
The second equation in (\ref{eqngnp}) follows from (\ref{eqnjzo}) and (\ref{eqnjph}).
\end{proof}

Motivated by the above, we set up the following notions: Let $\Om$ be an irreducible bounded symmetric domain with generic norm $N$ and $z^* \in \Om$. For $\phi \in \mathrm{Aut}(\Om)$ with  $\phi(z^*)=0$,  we set $R_{\phi}(z):=\frac{N^{\frac{1}{2}}(z^*, \Ol{z^*})}{N(z, \Ol{z^*})}.$ Then by (\ref{eqngnp}), $R_{\phi}(z)$ is holomorphic and nowhere zero in $\Om$. Moreover,
\begin{equation}\label{eqnrph}
|R_{\phi}(z)|^2=\frac{N(\phi(z), \Ol{\phi(z)})}{N(z, \Ol{z})}=|\mathrm{det}(J \phi(z))|^{\frac{2}{\gamma}}.
\end{equation}

\begin{remark}\label{rmk212}
With the above notations, writing $K$ for the Bergman kernel of $\Om$, $K (z, \Ol{z^*})$ extends holomorphically to a neighborhood of $\Ol{\Om}$ and thus $N(z, \Ol{z^*})$ is nowhere zero on $\Ol{\Om}.$  (cf. Lemma 1.1.1 in \cite{M2}). Hence there is some neighborhood $U$  of $\Ol{\Om}$ such that $R_{\phi}(z)$ extends holomorphically to $U$ and is everywhere nonzero there.
Let $\la$ be a positive real number. By making $U$ simply connected, we have a well-defined (holomorphic) branch of $(R_{\phi})^{\la}$  in $U$.
%(delete? This also follows from the first equation of Proposotion 2.1 and the fact that every automorphism of $D$ extends holomorphically to a neighborhood of $\Ol{\Om}$ (see Bell's result, see Lemma 2.2 Tu-Wang, Rigidity... Hua domains.) )
\end{remark}

\begin{proposition}\label{prpnspauto}
Let $\Om=\Om_1 \times \cdots \times \Om_l$ be a bounded symmetric domain in $\mathbb{C}^n$, where each $\Om_j, 1 \leq j \leq l,$ is an irreducible bounded symmetric domain in $\mathbb{C}_{z_j}^{n_j}$.
%Write $z=(z_1, \cdots, z_l)$ for the coordinates in $\mathbb{C}^n.$
Denote by $z=(z_1, \cdots, z_l) \in \Om_1 \times \cdots \times \Om_l$ the coordinates of $\Om \subseteq \mathbb{C}^n$.
Let $\rho(z, \Ol{z})=\prod_{j=1}^l \big(N_j(z_j, \Ol{z_j})\big)^{\la_j}$, where $N_j$ is the generic norm of $\Om_j$ and each  $\la_j$ is a positive real number (not necessarily an integers). Let $D:=D(\Om, \rho), \Sigma:=\Sigma(\Om, \rho)$ be as defined in (\ref{eqndlh}) and (\ref{eqnclh}), respectively. Let $\phi_j \in \mathrm{Aut}(\Om_j)$  for every $1 \leq j \leq l.$  Denote by $(z, \xi)$ the coordinates of $\Om \times \mathbb{C} \subseteq \mathbb{C}^n \times \mathbb{C}.$ Define $\Phi$ as
\begin{equation}\label{eqnpartauto}
\Phi(z, \xi):=\left(\phi_1(z_1), \cdots, \phi_l(z_l), e^{i\theta} \xi\prod_{j=1}^l (R_{\phi_j}(z_j))^{\la_j}  \right),~~\text{with}~\theta \in [0, 2\pi).
\end{equation}
Here as mentioned in Remark \ref{rmk212}, we can choose well-defined branches of $(R_{\phi_j}(z_j))^{\la_j}$ for all $1 \leq j \leq l$ in some neighborhood of $\Ol{\Om}.$
Then the following statements hold:

(1) $\Phi$ is a biholomorphism in a neighborhood of $\Ol{\Om} \times \mathbb{C}.$ 

(2) Recall the line bundle $L=\Om \times \mathbb{C}$ is equipped with the metric $|(z, \xi)|_h^2=|\xi|^2 \rho^{-1}(z, \Ol{z}).$  We have $\Phi$  is a norm-preserving isomorphism of $(L, h).$ 

(3) $\Phi: D \rightarrow D$ is isometric in the sense that $\Phi^*(\om_D)=\om_D,$ with $\om_D$ as given in $\S$2.3.
\end{proposition}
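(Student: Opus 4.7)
The plan is to verify (1), (2), (3) in this order, since each step builds on the previous. The key computational input is the transformation rule (\ref{eqnrph}): $|R_{\phi_j}(z_j)|^2 = N_j(\phi_j(z_j), \overline{\phi_j(z_j)})/N_j(z_j, \overline{z_j})$, which turns the product $\rho = \prod_j N_j^{\la_j}$ into the scaling factor produced by $\phi = (\phi_1, \ldots, \phi_l)$.

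For (1), I would first recall that each $\phi_j \in \mathrm{Aut}(\Om_j)$ is rational and extends biholomorphically to a neighborhood of $\Ol{\Om_j}$, so $\phi$ extends biholomorphically to a neighborhood of $\Ol{\Om}$. By Remark \ref{rmk212}, after shrinking to a simply connected neighborhood $U$ of $\Ol{\Om}$, each $(R_{\phi_j})^{\la_j}$ has a well-defined holomorphic, nowhere-zero branch. Setting $g(z) := e^{i\theta} \prod_{j=1}^l (R_{\phi_j}(z_j))^{\la_j}$, the map $\Phi$ takes the form $(z, \xi) \mapsto (\phi(z), \xi g(z))$, which is holomorphic on $U \times \mathbb{C}$; the explicit formula $(w, \eta) \mapsto (\phi^{-1}(w), \eta/g(\phi^{-1}(w)))$ provides a holomorphic inverse on a neighborhood of $\Ol{\Om} \times \mathbb{C}$ because $g$ is nowhere zero there.

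For (2), fiber-preservation is immediate, since the first $n$ coordinates of $\Phi(z, \xi)$ depend only on $z$ and the restriction to each fiber is the linear map $\xi \mapsto \xi g(z)$. Norm-preservation reduces to verifying
\begin{equation*}
|g(z)|^2 \rho(z, \Ol{z}) = \rho(\phi(z), \Ol{\phi(z)}),
\end{equation*}
which follows by raising (\ref{eqnrph}) to the power $\la_j$ for each $j$ and taking the product, using $\rho = \prod_j N_j^{\la_j}$. In particular, norm-preservation yields $\Phi(D) = D$, so $\Phi$ restricts to a biholomorphism of the disk bundle.

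For (3), combining the formula from (2) with the definition of $D$ gives
\begin{equation*}
\rho(\phi(z), \Ol{\phi(z)}) - |\xi g(z)|^2 = |g(z)|^2 \bigl(\rho(z, \Ol{z}) - |\xi|^2\bigr),
\end{equation*}
so
\begin{equation*}
\Phi^*(\om_D) = -\sqrt{-1}\, \partial \Ol{\partial} \log\bigl(|g|^2 (\rho - |\xi|^2)\bigr) = \om_D - \sqrt{-1}\, \partial \Ol{\partial} \log |g|^2 = \om_D,
\end{equation*}
since $g$ is holomorphic and nowhere zero and hence $\log |g|^2$ is pluriharmonic. The entire argument is essentially a direct calculation; the only delicate point is the well-definedness of the real powers $(R_{\phi_j})^{\la_j}$, which is already settled by Remark \ref{rmk212} together with simple-connectivity of a tubular neighborhood of $\Ol{\Om}$.
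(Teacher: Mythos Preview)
Your proof is correct and follows essentially the same approach as the paper: both arguments use (\ref{eqnrph}) to verify norm-preservation, then factor $\rho(\phi(z),\overline{\phi(z)})-|\xi g(z)|^2=|g(z)|^2(\rho(z,\Ol{z})-|\xi|^2)$ and apply $-\sqrt{-1}\,\partial\Ol{\partial}\log$ to obtain the isometry. The only cosmetic difference is that for (1) the paper constructs the inverse via $\psi_j=\phi_j^{-1}$ and the functions $R_{\psi_j}$, whereas you write down the algebraic inverse $(w,\eta)\mapsto(\phi^{-1}(w),\eta/g(\phi^{-1}(w)))$ directly; both are equivalent since $g$ is nowhere zero.
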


\begin{proof}
%Let $\Phi$ be as above. 
First as each $\phi_j$ extends holomorphically across $\partial \Om_j,$ $\Phi$ is holomorphic in a neighborhood of $\Ol{\Om} \times \mathbb{C}$. Secondly, set $\psi_j(z_j)=\phi_j^{-1}(z_j)$ for each $1 \leq  j \leq l$ and define
$$\Psi(z, \xi):=\left(\psi_1(z_1), \cdots, \psi_l(z_l), e^{i\alpha} \xi\prod_{j=1}^l (R_{\psi_j}(z_j))^{\la_j} \right),~~\text{with}~\alpha \in [0, 2\pi).$$
Again we can choose well-defined branches of $(R_{\psi_j}(z_j))^{\la_j}$ for all $1 \leq j \leq l$ in some neighborhood of $\Ol{\Om}.$
Since $\det J\psi_j(\phi_j(z_j))=(\det J \phi_j(z_j))^{-1},$ it holds that $|R_{\phi_j}(z_j)| \cdot |R_{\psi_j}(\phi_j(z_j))|=1.$ Consequently, we can choose an appropriate $\alpha$ such that $\Psi \circ \Phi$ equals to the identity map, and so is $\Phi \circ \Psi.$ This shows $\Phi$ is a biholomorphism in a neighborhood of $\Ol{\Om} \times \mathbb{C}.$
On the other hand, by (\ref{eqnrph}),  it holds that $\frac{N_j(\phi_j(z_j), \Ol{\phi_j(z_j)})}{N_j(z_j, \Ol{z_j})}=|R_{\phi_j}(z_j)|^2$. Consequently, writing $\Phi=(\w{\Phi}, \Phi_{n+1})=(\Phi_1, \cdots, \Phi_n, \Phi_{n+1}),$ the following holds in $\Om \times \mathbb{C},$ and in particular on $D:$
$$|\Phi_{n+1}|^2 \rho^{-1}(\w{\Phi}, \Ol{\w{\Phi}})=|\xi|^2 \rho^{-1}(z, \Ol{z}).$$
It follows that $\Phi$ is a norm-preserving isomorphism  of Hermitian line bundle $(L,h),$ and part (2) is proved. To prove part (3), we note, as a consequence of the equation above,
$$|\Phi_{n+1}|^{-2}(\rho(\w{\Phi}, \Ol{\w{\Phi}})-|\Phi_{n+1}|^{2})=|\xi|^{-2}(\rho(z, \Ol{z})- |\xi|^2),~\text{with}~(z, \xi) \in \Om \times \mathbb{C}~\text{with}~\xi \neq 0. $$
Consequently,
%\begin{equation}\label{eqnmetric2d9}
$$\rho(\w{\Phi}, \Ol{\w{\Phi}})-|\Phi_{n+1}|^{2}=(\rho(z, \Ol{z})- |\xi|^2)\prod_{j=1}^l |R_{\phi_j}(z_j)|^{2\lambda_j} ~\text{with}~(z, \xi) \in \Om \times \mathbb{C}.$$
%\end{equation}
We finally take the logarithm of the above equation on $D$, and apply $-\partial \Ol{\partial}$ to see $\Phi$ is isometric.
\end{proof}

\begin{remark}
Let $\mathcal{G}$ be the set of automorphisms of $D$ in the form as in (\ref{eqnpartauto}). One can verify that $\mathcal{G}$ is a subgroup of $\mathrm{Aut}(D),$ and that $\mathcal{G}$ acts transitively on $\Sigma$ and transitively on $\Om \times \{0\}.$
%and fiber-wise transitively on the line bundle $L=\Om \times \mathbb{C}$ 
However, $D$ is not a homogeneous domain in general. We also emphasize that in general $\mathcal{G}$ is a proper subset of $\mathrm{Aut}(D)$; see the following example.
\end{remark}

\begin{example}
Let  $\Omega=\Delta^2 \subseteq \mathbb{C}^2_{z}= \mathbb{C}_{z_1} \times \mathbb{C}_{z_2}$ and $\rho=(1-|z_1|^2)(1-|z_2|^2)$. Define $H$ to be $H(z_1, z_2, \xi)=(z_2, z_1, \xi).$ It is clear that $H \in \mathrm{Aut}(D)$ but $H \not \in \mathcal{G}.$
\end{example}

The next proposition describes the automorphism group $\mathrm{Aut}(D)$ of the disk bundle $D$. The proof of it borrows some ideas from
 Bi-Tu \cite{BT2}, Zhao-Wang-Hao \cite{ZWH},  etc.

\begin{proposition}\label{prpnauto}
Let $\Omega=\Omega_1 \times \cdots \times \Omega_l$ be a bounded symmetric domain as in Proposition \ref{prpnspauto}. Assume it has rank at least two. Let $\rho$, $D, \Sigma$ and $(z, \xi)$ be as in Proposition \ref{prpnspauto}. Let $F \in \mathrm{Aut}(D)$. Then

(1) $F$ has the following format:
\begin{equation}\label{eqnmapformat}
F(z, \xi)=(f(z), \xi h(z)).
\end{equation}
Here $f$ is an automorphism of $\Omega$, and $h$ is a nowhere zero holomorphic function on $\Om$. %which is in particular a rational map, and $h$ is also a rational function. 

(2)  $F$ extends to a norm-preserving holomorphic self-isomorphism of the line bundle $(L=\Om \times \mathbb{C}, \rho^{-1}).$

(3) $F: D \rightarrow D$ is an isometry of $D$ in the sense that $F^*(\om_D)=\om_D.$
\end{proposition}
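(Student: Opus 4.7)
The plan is to first extend $F$ holomorphically across the strongly pseudoconvex stratum $\Sigma$ of $\partial D$, then exploit the rank $\geq 2$ hypothesis to force $F$ to preserve the zero section $Z := \Omega \times \{0\}$, and finally read off the asserted form from a Hartogs expansion.

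First I extend $F$ across $\Sigma$. Since $D$ is pseudoconvex and $\Sigma$ is a real-analytic strongly pseudoconvex open piece of $\partial D$, standard local boundary regularity for biholomorphisms at strongly pseudoconvex boundary points, combined with the Pinchuk--Tsyganov holomorphic extension cited in Remark \ref{R1}, produces a holomorphic extension of $F$ to an open neighborhood $U$ of $\Sigma$ in $\mathbb{C}^{n+1}$; the same argument applied to $F^{-1}$ makes $F$ a local biholomorphism on $D \cup U$, and Webster's theorem shows it is rational. Moreover, biholomorphisms preserve Levi signatures; by Lemma \ref{lmstom}, under rank $\geq 2$ every point of $\partial\Omega\times\{0\}$ is either weakly pseudoconvex or non-smooth, while $\Sigma$ is strongly pseudoconvex, so $F(\Sigma)\subseteq\Sigma$, and symmetrically for $F^{-1}$, giving $F(\Sigma)=\Sigma$.

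The crux is to show $F(Z)=Z$. Let $M:=F(Z)$, an irreducible closed $n$-dimensional complex analytic subset of $D$. I first claim $\overline{M}\cap\partial D\subseteq\partial\Omega\times\{0\}$. Indeed, for any sequence $p_k=(z_k,0)\in Z$ with $z_k\to z_0\in\partial\Omega$, properness of $F$ as a self-biholomorphism of $D$ forces every subsequential limit $q^*\in\overline D$ of $F(p_k)$ to lie in $\partial D$; if $q^*\in\Sigma$ then $F^{-1}$, which extends across $\Sigma$ into $\Sigma$, would give $(z_0,0)=\lim_k p_k=F^{-1}(q^*)\in\Sigma$, contradicting $\Sigma\cap(\partial\Omega\times\{0\})=\emptyset$. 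Next, the projection $\pi:M\to\Omega$, $(z,\xi)\mapsto z$, is proper (since $\overline M\cap\partial D$ avoids $\Sigma$); a dimension count rules out the case $\pi(M)\subsetneq\Omega$, for otherwise $M$ would contain a full vertical slice $\{z_0\}\times\{|\xi|^2<\rho(z_0,\overline{z_0})\}$ whose closure meets $\Sigma$. Thus $\pi|_M$ is generically finite of some degree $N$, with fibers described multivaluedly by points $(z,\xi_1(z)),\ldots,(z,\xi_N(z))$ satisfying $|\xi_i(z)|^2<\rho(z,\overline{z})$. The elementary symmetric polynomials $e_k(\xi_1,\ldots,\xi_N)$ descend to single-valued, bounded holomorphic functions on $\Omega$; from the boundary behavior just established, $\xi_i(z)\to 0$ uniformly as $z\to\partial\Omega$, so each $e_k$ extends continuously to $\overline{\Omega}$ with $e_k|_{\partial\Omega}=0$. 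The maximum principle forces $e_k\equiv 0$ for all $k$, whence $\xi_i\equiv 0$ and $M=Z$.

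With $F(Z)=Z$ in hand, write $F(z,\xi)=(\tilde F(z,\xi),F_{n+1}(z,\xi))$ and set $f(z):=\tilde F(z,0)$; then $F_{n+1}(z,0)\equiv 0$, and the same reasoning applied to $F^{-1}$ gives $f\in\mathrm{Aut}(\Omega)$. Expanding in Hartogs series $\tilde F(z,\xi)=f(z)+\sum_{k\geq 1}\xi^k a_k(z)$ and $F_{n+1}(z,\xi)=\sum_{k\geq 1}\xi^k b_k(z)$, substituting into the boundary identity $|F_{n+1}|^2=\rho(\tilde F,\overline{\tilde F})$ on $\Sigma$ and complexifying $(\overline z,\overline\xi)$ to independent variables, matching monomials $\xi^p\overline\xi^q$ forces $a_k\equiv 0$ and $b_k\equiv 0$ for $k\geq 2$ inductively, so $F(z,\xi)=(f(z),\xi h(z))$ with $h:=b_1$ nowhere zero (since $F$ is a biholomorphism). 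The boundary identity then reduces to $|h(z)|^2\rho(z,\overline{z})=\rho(f(z),\overline{f(z)})$ throughout $\Omega$, establishing (2); and (3) follows by the same $-\partial\overline\partial\log$ computation as in Proposition \ref{prpnspauto}(3). The main obstacle is the second step: the rank $\geq 2$ hypothesis enters only there, via Lemma \ref{lmstom}, which distinguishes $\Sigma$ from the rest of $\partial D$ by Levi signature and underpins both $F(\Sigma)=\Sigma$ and the boundary dichotomy driving the symmetric-polynomial argument.
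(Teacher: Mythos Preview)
Your approach is genuinely different from the paper's, and parts of it are appealing, but there are two gaps, one minor and one serious.

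\textbf{Boundary extension.} You assert that $F$ extends holomorphically across $\Sigma$ by ``standard local boundary regularity'' plus Pinchuk--Tsyganov. But Pinchuk--Tsyganov takes a continuous CR map as input; you have not shown $F$ extends continuously to $\Sigma$. The domain $D$ is not smoothly bounded (rank $\geq 2$), so Fefferman-type global boundary regularity does not directly apply. Local extension of a biholomorphism at a strongly pseudoconvex point of a merely pseudoconvex domain is plausible but needs an argument. The paper avoids this entirely: its proof that $F(\Omega\times\{0\})=\Omega\times\{0\}$ uses the Wong--Rosay theorem (an orbit of $\mathrm{Aut}(D)$ accumulating at a strongly pseudoconvex boundary point forces $D\cong\mathbb{B}^{n+1}$, contradicting rank $\geq 2$), which needs no boundary extension. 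Your symmetric-polynomial argument for $F(Z)=Z$ is elegant and would be a nice alternative, but it leans on the extension of $F^{-1}$ across $\Sigma$ in the boundary dichotomy $\overline{M}\cap\partial D\subseteq\partial\Omega\times\{0\}$.

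\textbf{The monomial matching.} This is the real problem. On $\Sigma$ one has the constraint $\xi\overline{\xi}=\rho(z,\overline{z})$, so after complexification the identity holds only on the hypersurface $\{\xi\eta=\rho(z,w)\}$, and the monomials $\xi^p\eta^q$ are \emph{not} independent there: any $\xi^p\eta^q$ with $p,q\geq 1$ can be traded for $\xi^{p-1}\eta^{q-1}\rho(z,w)$. Consequently one cannot simply ``match $\xi^p\overline{\xi}^q$'' to force $a_k=0$ and $b_k=0$ for $k\geq 2$; the resulting system couples all coefficients together with polynomial weights in $\rho(z,w)$, and no induction as stated isolates them. The paper handles this step quite differently: having shown $F(\Omega\times\{0\})=\Omega\times\{0\}$, it composes with an element of $\mathcal{G}$ (Proposition~\ref{prpnspauto}) so that $\widetilde{F}:=F\circ\Phi$ fixes the origin. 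Since $D$ is bounded and circular, Cartan's theorem gives that $\widetilde{F}$ is \emph{linear}, and then a short computation using the properties of generic norms pins down the matrix blocks. You have all the ingredients to do the same once $F(Z)=Z$ is known; replacing your Hartogs-matching paragraph with this Cartan-linearity step would close the gap cleanly.
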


\begin{proof}
Let $F$ be an automorphism of $D$. Write $F=(f, g)=(f_1, \cdots, f_n, g).$

\smallskip

% Recall that $D$ is bounded circular and the Bergman kernel of $D$ is rational. Then it follows from a result of Bell \cite{B} that $F$ is rational. Write $F=
%\frac{p}{q}$ where $p=(p_1, \cdots, p_{n+1})$ and $\{p_1, \cdots, p_{n+1},q\}$ has only trivial common factors. Since $F$ is well-defined on 
%$\Om \times \{\xi=0\} \subseteq D,$ we see $q \not \equiv 0$ on $\{\xi=0\}$. Then there exists some $(Z^*, 0) \in \partial \Om \times \{\xi=0\}$ such that $q \neq 0$  and $F$ is holomorphic in a neighborhood $U$ of $Z^*$. Write $M:= (\partial \Om \times \{\xi=0\}) \cap U.$

{\bf Claim:} We have $F(\Omega \times \{0\})=\Omega \times \{0\}$. Equivalently, $g \equiv 0$ on $\Omega \times \{0\}.$

{\bf Proof of Claim:} It suffices to prove for any sequence $\{p_i\}_{i \geq 1}$ in $\Omega \times \{0\}$ such that $F(p_i)$ converges, we have
the limit of $F(p_i)$ lies in $\partial D \setminus \Sigma=\partial \Omega \times \{0\}$. Suppose this is not true. Then there is a sequence $\{q_i\}_{i \geq 1} \subseteq \Omega \times \{0\}$ such that $\lim_{i \rightarrow \infty} F(q_i) \in \Sigma.$ Choose for each $i \geq 1,$ $\Phi_i \in \mathcal{G}$ such that $\Phi_i$ maps $0 \in D$ to $q_i.$ Then we obtain a sequence $\{F \circ \Phi_i\} \subseteq \mathrm{Aut}(D)$ such that $q:=\lim_{i \rightarrow \infty} F \circ \Phi_i(0)  \in \Sigma$ (Note $q$ is a strongly pseudoconvex boundary point of $D$ as $q \in \Sigma$). This implies $D$ has a strongly pseudoconvex boundary orbit accumulation point under the action of $\mathrm{Aut}(D).$ By a well-known theorem of Wong-Rosay, we see $D$ is biholomorphic to the unit ball. This is a contradiction, as $\Om$ has rank at least two. \qed

\smallskip

%To prove this, we write $V=\{(Z, \xi) \in \mathbb{C}^n \times \mathbb{C}: q(Z, \xi) \neq 0, \det JF(Z, \xi)=0 \}.$ Since $\Om \times \{0\} \not \subset V,$ we see $M \not \subset V$. Thus by perturbing $Z^*$ and shrinking $U$ if needed, we can assume $det JF \neq 0$ everywhere in $U$ and therefore $F$ is biholomorphic in $U$. Then $F$ cannot map any point on $M$ to $\Sigma,$ as the latter is strongly pseudoconvex while $M$ has  nontrivial complex subvarieties passing through a generic point. The first part of the claim then follows. To prove the latter assertion,  By the first part of the claim, we have $g=0$ on $M$. Consequently, $g(Z, 0)=0$ for all $Z \in \Omega$ and thus, $F$ must map $\Omega \times \{\xi=0\}$ to itself. The same argument and conclusion apply to $F^{-1}$ as well. This implies
%$f(Z, 0) \in \mathrm{Aut}(\Om).$ \qed

By the above claim, we can pick some $\Phi \in \mathcal{G}$ such that $\w{F}:=F \circ \Phi$ is an automorphism of $D$ mapping the origin $0 \in D$ to itself. Since $D$ is bounded and circular, by a classical theorem of Cartan,
%by Ishi-Kai \cite{IK} (see Proposition 2.1 there. See also Theorem 4 in \cite{KNY}), 
we have $\w{F}$ is a linear map:
$$\w{F}=(z, \xi) \begin{pmatrix} \mathcal{A} & \mathcal{B} \\ \mathcal{C} & \mathcal{D}  \end{pmatrix}. $$
Here $\mathcal{A}$, $\mathcal{B}$ and $\mathcal{C}$ are $n \times n$, $n \times 1$, $1 \times n$ matrices with complex entries, respectively, and $\mathcal{D}$ is a complex number. Write $\w{F}=(\w{f}, \w{g})=(\w{f}_1, \cdots, \w{f}_n, \w{g}).$ By the claim, $\w{g}(z, 0) \equiv 0.$ This implies $\mathcal{B}=0$. As $\w{F}$ is a nondegenerate linear map, we have $\det\mathcal{A} \neq 0$ and $\mathcal{D} \neq 0$. Note
$$\w{G}:=\w{F}^{-1}=(z, \xi) \begin{pmatrix} \mathcal{A} & \mathcal{B} \\ \mathcal{C} & \mathcal{D}  \end{pmatrix}^{-1}=(z, \xi) \begin{pmatrix} \mathcal{A}^{-1} & 0_{n \times 1} \\ -\mathcal{D}^{-1}\mathcal{C} \mathcal{A}^{-1} & \mathcal{D}^{-1}  \end{pmatrix}.$$
Here $0_{k_1 \times k_2}$ denotes the $k_1 \times k_2$ zero matrix.
Since $\rho(0,0)=1$, we have $(0, \xi) \in D$ whenever $|\xi|< 1.$ Consequently, $\w{F}(0, \xi)=(\xi \mathcal{C}, \xi \mathcal{D}) \in D$  when $|\xi|<1.$ That is, $\xi \mathcal{C} \in \Om$ (and thus $\mathcal{C} \in \Ol{\Om}$) and
\begin{equation}\label{eqnxirho}
|\xi \mathcal{D}|^2< \rho(\xi \mathcal{C}, \Ol{\xi \mathcal{C}}) \leq 1,~\forall~ |\xi|<1.
\end{equation}
The latter inequality above follows from the properties of generic norms. Note (\ref{eqnxirho}) implies $|\mathcal{D}| \leq 1.$ Applying the same argument to $\w{G},$ we get $|\mathcal{D}^{-1}| \leq 1.$ Therefore $|\mathcal{D}|=1.$
This together with  (\ref{eqnxirho}) implies $|\xi|^2< \rho(\xi \mathcal{C}, \Ol{\xi \mathcal{C}}) \leq 1$ for $|\xi|<1.$ Letting $\xi \rightarrow 1,$ we see $\rho(\mathcal{C}, \Ol{\mathcal{C}})=1$. By the properties of generic norms again, this can only occur when  $\mathcal{C}=0.$ Thus $\w{F}=(z\mathcal{A}, \xi \mathcal{D}),$ and $F=\w{F} \circ \Phi^{-1}$ is of the format (\ref{eqnmapformat}). As $F$ is an automorphism  of $D$, we must have $f$ is an automorphism of $\Omega$, and $h$ is nowhere zero on $\Om$.  This proves part (1).

To see $F$ is norm-preserving and isometric on $D$, we only need to show $\w{F}$ is norm-preserving and isometric on $D$ (as $\Phi^{-1}$ is known to have these properties by Proposition \ref{prpnspauto}). For that, we note $\w{F}$ must map $\Sigma$ to $\Sigma$. Therefore for every $z \in \Om,$ we have $|\xi|^2=|\xi \mathcal{D}|^2=\rho(z \mathcal{A}, \Ol{z\mathcal{A}})$ if $|\xi|^2=\rho(z, \Ol{z}).$
This yields $\rho(z \mathcal{A}, \Ol{z\mathcal{A}})=\rho(z, \Ol{z})$ for $z \in \Om.$
Consequently,
$$|\xi \mathcal{D}|^2 \rho^{-1}(z \mathcal{A}, \Ol{z\mathcal{A}})= |\xi|^2 \rho^{-1}(z, \Ol{z}); \quad \rho(z \mathcal{A}, \Ol{z\mathcal{A}})-|\xi \mathcal{D}|^2=\rho(z, \Ol{z})-|\xi|^2.$$
The first equation above shows $\w{F}$ is norm-preserving. To see $\w{F}$ is isometric on $D$, we take the logarithm of the second equation above, and then apply the  $-\partial \Ol{\partial}$ operator. This proves part (2) and (3) of the proposition.
\end{proof}

\subsection{Generic norms and Segre varieties}

Let $\Om$ be a (possibly reducible) bounded symmetric domain in $\mathbb{C}^n_z$ and $\rho$ a generalized  generic norm of $\Omega.$ Let $D=D(\Om, \rho) \subseteq \mathbb{C}^{n+1}_{(z, \xi)}$ and
$\Sigma=\Sigma(\Om, \rho) \subseteq \mathbb{C}^{n+1}_{(z, \xi)}$ be as defined in (\ref{eqndlh}) and (\ref{eqnclh}), respectively. Write $r(z, \xi, w, \eta):=\rho(z, w)-\xi \eta$ for the generalized generic norm of $D$, where $z, w \in \mathbb{C}^n$ and $\xi, \eta \in \mathbb{C}$. Fix $Z^* =(z^*, \xi^*) \in \mathbb{C}^{n}_{z} \times \mathbb{C}_{\xi}.$ Recall the Segre variety of $\Sigma$ associate to $Z^*$ is
\begin{equation}\label{eqnbq}
	Q_{Z^*}=\big\{(z, \xi) \in \mathbb{C}^{n+1}:  r(z, \xi, \ol{z^*}, \ol{\xi^*})=0 \big\}.
\end{equation}

It is clear that $Q_{Z^*}$ is a (smooth) complex submanifold of $\mathbb{C}^{n+1}$ if $\xi^* \neq 0.$ We have the following facts about the generic norm $r$ and Segre varieties, which will be useful  in the proof of Theorem \ref{T1}.

\begin{proposition}\label{prpnzsh}
We have the following statements hold:

(a) Fix any $(z^*, \xi^*) \in \mathbb{C}^{n+1}$ with $\xi^* \neq 0.$  We have $r(z, \xi, \ol{z^*}, \ol{\xi^*})$ is irreducible in $\mathbb{C}[z, \xi]$.

(b) Let $Z^*=(z^*, \xi^*), \hat{Z}=(\hat{z}, \hat{\xi}) \in \mathbb{C}^{n+1}.$ Assume there is a germ of $n-$dimensional complex manifold $X \subseteq \mathbb{C}^{n+1}$ such that $X \subseteq Q_{Z^*} \cap Q_{\hat{Z}}.$ If $\xi^* \neq 0$ or $\hat{\xi} \neq 0,$ then $Z^*=\hat{Z}.$

\end{proposition}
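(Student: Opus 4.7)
The plan is to handle the two parts separately. Part (a) is essentially immediate from the $\xi$-degree structure of $r$; part (b) will reduce to a simple dimension argument combined with the non-degeneracy of the generic norm.

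For part (a), observe that $r(z,\xi,\ol{z^*},\ol{\xi^*}) = \rho(z,\ol{z^*}) - \ol{\xi^*}\xi$ has degree one in $\xi$ with nonzero leading coefficient $-\ol{\xi^*}$. Any factorization in $\mathbb{C}[z,\xi]$ must split the $\xi$-degree, so one factor lies in $\mathbb{C}[z]$ while the other has the form $A(z) + B(z)\xi$. Matching the coefficient of $\xi$ forces the $z$-only factor to be a nonzero constant, which gives irreducibility.

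For part (b), I would split into cases according to whether $\hat\xi$ vanishes. Suppose first that $\xi^*\hat\xi \neq 0$. By part (a), $Q_{Z^*}$ and $Q_{\hat Z}$ are both smooth irreducible hypersurfaces, realized as graphs $\xi = \rho(z,\ol{z^*})/\ol{\xi^*}$ and $\xi = \rho(z,\ol{\hat z})/\ol{\hat\xi}$ over $\mathbb{C}^n_z$. Since they share the germ $X$ of dimension $n$, the two defining polynomials differ by a nonzero scalar $c$. Comparing coefficients of $\xi$ gives $c = \ol{\xi^*}/\ol{\hat\xi}$, and setting $z=0$ together with $\rho(0,w)\equiv 1$ forces $c = 1$, hence $\xi^* = \hat\xi$ and $\rho(z,\ol{z^*}) \equiv \rho(z,\ol{\hat z})$ in $z$. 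To conclude $z^* = \hat z$, I would use that $\log\rho(z,w) = \sum_i k_i \log N_i(z_i,w_i)$ is well-defined near $(0,0)$ (since $\rho(0,w)\equiv \rho(z,0) \equiv 1$) and take its bilinear-in-$(z,\bar w)$ coefficient matrix; this is block-diagonal with blocks that, up to positive scalars, recover the Bergman metrics of the irreducible factors $\Omega_i$ at their origins, hence is positive definite. Expanding both sides of $\log\rho(z,\ol{z^*}) = \log\rho(z,\ol{\hat z})$ in $z$ and equating the coefficient of each $z_a$ yields $z^* = \hat z$ by non-degeneracy.

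In the remaining case, say $\xi^*\neq 0$ and $\hat\xi = 0$ (the opposite situation is symmetric), the Segre variety $Q_{\hat Z}$ degenerates to the cylinder $V\times \mathbb{C}_\xi$, where $V = \{z\in\mathbb{C}^n : \rho(z,\ol{\hat z}) = 0\}$. Since $\rho(0,\ol{\hat z}) = 1 \neq 0$, the polynomial $\rho(z,\ol{\hat z})$ is not identically zero, so $V$ is a proper algebraic subvariety of $\mathbb{C}^n_z$. Meanwhile, $Q_{Z^*}$ is biholomorphic (via the projection $\pi_z$) to $\mathbb{C}^n_z$, and the $n$-dimensional germ $X \subseteq Q_{Z^*}$ therefore projects to an open subset of $\mathbb{C}^n_z$. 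The containment $X \subseteq V \times \mathbb{C}_\xi$ would force this open set to lie in the proper subvariety $V$, a contradiction. The main subtlety is the non-degeneracy step in the first case; this is where it matters that $\rho$ comes from an actual bounded symmetric domain rather than an arbitrary polynomial with $\rho(0,w)\equiv 1$.
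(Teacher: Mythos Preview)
Your argument for (a) matches the paper's. For (b), your treatment of the case $\hat\xi = 0$ via projection is correct (the paper instead uses the Nullstellensatz and a $\xi$-degree comparison), and your reduction in the case $\xi^*\hat\xi\neq 0$ to the identity $\rho(z,\ol{z^*})\equiv\rho(z,\ol{\hat z})$ is fine.

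The gap is in the last step. Equating the linear-in-$z$ coefficients of $\log\rho(z,\ol{z^*})$ and $\log\rho(z,\ol{\hat z})$ gives $\nabla_z\rho(0,\ol{z^*})=\nabla_z\rho(0,\ol{\hat z})$ (since $\rho(0,\cdot)\equiv 1$). Your claim that the bilinear-in-$(z,\bar w)$ matrix of $\log\rho$ at $(0,0)$ is positive definite is correct, but this only says that the map $\Phi:w\mapsto\nabla_z\rho(0,\ol w)$ has invertible derivative at $w=0$, hence is injective \emph{near} $0$. It does not give global injectivity on $\mathbb{C}^n$, and nothing in the hypothesis places $z^*$ or $\hat z$ near the origin. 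A holomorphic map with nondegenerate Jacobian at one point need not be globally injective (already in one variable, $w\mapsto w-w^3/3$ fails).

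The paper closes this gap by invoking a structural fact about generic norms (attributed to Loos): for each coordinate $z_j$ one has $\partial_{z_j}\rho(0,w)=c_jw_j$ with $c_j\neq 0$, so $\Phi$ is actually \emph{linear} and diagonal, hence globally injective. You need either this fact or some independent argument that $\rho(\cdot,\ol w)$ determines $w$; positive definiteness of the Hessian at the origin alone does not suffice.
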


\begin{proof}
To establish part (a), writing $p(z, \xi)=r(z, \xi, \ol{z^*}, \ol{\xi^*}),$ we assume
\begin{equation}\label{eqnr12}
p(z, \xi)=p_1(z, \xi)p_2(z, \xi).
\end{equation}
for some polynomials $p_1, p_2 \in \mathbb{C}[z, \xi]$. Since $p$ is linear in $\xi,$ without loss of generality, we can assume
$p_1(z, \xi)=a(z)\xi+b(z)$, and $p_2$ does not dependent on $\xi: p_2(z, \xi)=p_2(z).$ Then by comparing the $\xi-$terms on both sides of (\ref{eqnr12}), we get $a(z)p_2(z)=-\ol{\xi^*}.$ Consequently, $p_2(z)$ and $a(z)$ are constant. Hence $p$ is irreducible.

To establish part (b), without loss of generality, we assume $\xi^* \neq 0.$ Then by part (a), $Q_{Z^*}$ is irreducible. By assumption, we have
$Q_{\hat{Z}}$ contains some open piece of $Q_{Z^*}.$ Consequently, by Hilbert's Nullstellensatz, $r(z, \xi, \ol{z^*}, \ol{\xi^*})$ divides $r(z, \xi, \ol{
\hat{z}}, \ol{\hat{\xi}})$. This yields $\hat{\xi} \neq 0$(for otherwise, $r(z, \xi, \ol{\hat{z}}, \ol{\hat{\xi}})$ is independent of $\xi$, while $r(z, \xi, \ol{z^*}, \ol{\xi^*})$ has $\xi-$term, a plain contradiction). By part (a) again, $Q_{\hat{Z}}$ is also irreducible. Since $Q_{Z^*}$ and $Q_{\hat{Z}}$ share an open subset by assumption, we have $Q_{Z^*}=Q_{\hat{Z}}.$ Recall $\rho(0, \ol{z})=1$ for all $z$. Therefore $\big(0, (\ol{\xi^*})^{-1}\big) \in \mathbb{C}^{n} \times \mathbb{C}$ is on $Q_{z^*}$, and thus on $Q_{\hat{Z}}$. This implies $\xi^*=\hat{\xi}.$ As $Q_{Z^*}=Q_{\hat{Z}},$ we must have $\rho(z, \ol{z^*})=\rho(z, \ol{\hat{z}})$. By the special form of the generic norm $\rho$ (cf. \cite{Lo}. Indeed, with $z=(z_1, \cdots, z_n)$ and $w=(w_1, \cdots, w_n),$ the value of $\frac{\partial \rho}{\partial z_j}(z, w), 1 \leq j \leq n,$ at $(0, w)$  equals $c_j w_j$ for some nonzero universal constant $c_j$ that is independent of $w$), we have $z^*=\hat{z}.$ This proves part (b).
\end{proof}

The following proposition will also be applied in the proof of the main theorem.

\begin{proposition}\label{prpnrir}
Let $p(z, w)$ be a nonzero polynomial in $\mathbb{C}[z, w].$ Then $q(z, w, \xi, \eta):= p(z, w)-\xi \eta$ is irreducible in $\mathbb{C}[z, w, \xi, \eta].$ In particular, $r(z, w, \xi, \eta):= \rho(z, w)-\xi \eta$ is irreducible in $\mathbb{C}[z, w, \xi, \eta],$ where $\rho$ is a generalized generic norm of some (possibly reducible) bounded symmetric domain.
\end{proposition}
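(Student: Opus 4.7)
My plan is to exploit the fact that $q$ is linear in each of $\xi$ and $\eta$ separately, and run a short case analysis on any hypothetical factorization.

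Suppose $q = q_1 q_2$ in $\mathbb{C}[z, w, \xi, \eta]$. Since $\deg_\xi q = 1$, exactly one of the factors, say $q_2$, satisfies $\deg_\xi q_2 = 0$, so $q_2 \in \mathbb{C}[z, w, \eta]$. Likewise, since $\deg_\eta q = 1$, exactly one factor is free of $\eta$. This splits the analysis into two cases, which I would handle in the following order.

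In the first case, $q_2$ is free of both $\xi$ and $\eta$, i.e.\ $q_2 \in \mathbb{C}[z, w]$, and $q_1 \in \mathbb{C}[z, w, \xi, \eta]$ is linear in $\xi$. Writing $q_1 = a(z, w, \eta)\xi + b(z, w, \eta)$ and comparing with $q = p - \xi\eta$, I get $a \cdot q_2 = -\eta$ and $b \cdot q_2 = p$. The first relation, expanded in powers of $\eta$, immediately forces $q_2$ to be a nonzero constant, so the factorization is trivial.

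In the second case, $q_1$ is linear in $\xi$ and free of $\eta$, while $q_2$ is linear in $\eta$ and free of $\xi$; write $q_1 = a(z,w)\xi + b(z,w)$ and $q_2 = c(z,w)\eta + d(z,w)$. Expanding $q_1 q_2$ and matching with $p - \xi\eta$ gives the four relations $ac = -1$, $ad = 0$, $bc = 0$, $bd = p$. The first forces $a, c$ to be nonzero constants, hence $b = d = 0$, hence $p = bd = 0$, contradicting the hypothesis $p \neq 0$.

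Thus only the first case occurs and $q$ is irreducible. The concluding statement about $r(z, w, \xi, \eta) = \rho(z, w) - \xi \eta$ is immediate, since any generalized generic norm $\rho$ is a nonzero polynomial (indeed $\rho(0, 0) = 1$). There is no real obstacle here; the only thing to be careful about is allowing $q_2$ (or $q_1$) to depend on $\eta$ in the first case, which is why I write $a$ and $b$ as polynomials in $\eta$ before comparing coefficients.
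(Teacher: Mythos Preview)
Your proof is correct and follows essentially the same approach as the paper: a short case analysis on the degrees of the hypothetical factors in $\xi$ and $\eta$, leading to the same two cases and the same contradictions. The only difference is organizational---you split according to $\deg_\xi$ and $\deg_\eta$ separately, while the paper splits according to the total degree in $(\xi,\eta)$---but the substance is identical.
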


\begin{proof}
Suppose $q$ is reducible. Then there exist $P, Q \in \mathbb{C}[z, w, \xi, \eta]$ with $\deg P \geq 1, \deg Q \geq 1,$  such that $q=PQ.$ Without loss of generality, we assume the degree of $P$ in $(\xi, \eta)$ is greater than or equal to that of $Q$. Note we can only have the following two cases: (a) The degree of $P$ in $(\xi, \eta)$ equals $2$ and that of $Q$ equals $0$. In this case we have 
$P=a_0\xi \eta +a_1 \xi + a_2 \eta +a_3 $ and $Q=b$ for some $a_0, a_1, a_2, a_3, b \in \mathbb{C}[z, w].$ Note we must 
have  $a_0b=-1$. This is impossible as $\deg Q \geq 1$. (b) The degree of both $P$ and $Q$ in $(\xi, \eta)$ equals $1$. Without loss of generality, assume $P=a_0\xi+a_1, Q=b_0\eta +b_1$ with $a_0, a_1, b_0, b_1 \in \mathbb{C}[z, w].$
Note $a_0, a_1, b_0, b_1$ are all nonzero polynomials as we must have $a_0b_0=-1$ and $a_1b_1=p$. Then $a_0b_1 \neq 0$ and $a_1b_0 \neq 0.$  This yields $PQ$ has linear terms in $(\xi, \eta).$ But $q$ has no such terms, a plain contradiction. Hence  $q$ is irreducible.
\end{proof}

\section{Proof of Theorem \ref{T1}}

%{\bf Proof of Theorem \ref{T01}:} Write $p(z, \ol{z})=|r_1(z)|^2 \hat{p}(z, \ol{z})$ and $q(z, \ol{z})=|r_1(z)|^2 \hat{q}(z, \ol{z}).$ 
%We replace $p, q$ by $\hat{p}, \hat{q}$ and still call them $p, q$. Then the assumptions (a), (b) still hold (note (c) has nothing to do with $p, q$, therefore still holds as well). Indeed,

%For (a), since $p>0$ on $D$, see $\hat{p}>0, r_1 \neq 0$ on $D_1$. Likewise, we have $\hat{q} >0, r_2 \neq 0$ on $\ol{D_1}.$  Since the zero locus $V=\{R_1=0\}$ is nowhere dense, equivalently, for every $z^* \in \partial D_1,$ there is a point in $\partial D_1 \setminus V$ arbitrarily close to $z^*.$

%For (b), since $r_1 \neq 0, r_2 \neq 0$ on  $D_1$, we have $\omega_D:=\sqrt{-1} \partial \overline{\partial} \log \frac{q}{p}=\sqrt{-1} \partial \overline{\partial} \log \frac{\hat{q}}{\hat{p}}$.

In this section, we give a proof of Theorem \ref{T1}. For convenience, we will write $\Sigma_i=\Sigma(\Omega_i, \rho_i)$ for $i=1,2$.
As in Remark \ref{R1}, we can indeed assume $F$ is a nonconstant holomorphic map from an open connected subset $U \subseteq \mathbb{C}^{n+1}$ (with $U \cap \Sigma_1 \neq \emptyset$) to $\mathbb{C}^{n+1}$, and satisfies $F(U \cap \Sigma_1) \subseteq \Sigma_2.$ We carry out the proof in two steps. In $\S$3.1, we use a monodromy argument to establish the rationality of the map $F$. %and we will borrow ideas from \cite{HZ} and \cite{HLX}. 
Then in $\S$3.2, we use the rationality of $F$ to study its geometric property in terms of the canonical metric introduced in $\S$2.3. %will play an important role in the proof.
%We will also apply some recently developed ideas and techniques in CR geometry \cite{HZ, HLX, HX, FHX}.

\subsection{Rationality of the maps $F$ and $F^{-1}$}

By Webster's Theorem \cite{W1} (see also Huang \cite{Hu1}, as well as Zaitsev \cite{Za}, where very general algebraicity results were established), $F$ extends to an algebraic holomorphic map. Then to show the rationality of $F$, it suffices to prove the single-valueness of the extension of $F$.  First Lemma 3.7 in Huang-Zaitsev \cite{HZ} will be an important starting point of the proof. We need a slightly more general formulation of it as follows.

\begin{proposition}\label{prpnhz37}
	Let $U$ be a connected open subset in $\mathbb{C}^m,$ and $\mathcal{V}$ a complex subvariety of $U$ of codimension one with $p \in  U \setminus \mathcal{V}$.
	Write $\Gamma$ for the subgroup  of  $\pi_1(U \setminus \mathcal{V}, p) $ that is generated by loops obtained by concatenating paths $\gamma_1, \gamma_2, \gamma_3,$ where $\gamma_1$ connects $p$ with a point arbitrarily close to a smooth point $q_0 \in \mathcal{V}, \gamma_2$
	is a loop around $\mathcal{V}$ near $q_0$ and $\gamma_3$ is $\gamma_1$ reserved.
	
	(1). Let $\gamma$ be a loop in $U \setminus \mathcal{V}$ with base point $p$. If $\gamma$ is null-homotopic  in $U,$ then $\gamma \in \Gamma.$
	
	(2). In particular, if  $U$ is simply connected, then $\Gamma=\pi_1(U \setminus \mathcal{V}, p).$ 
\end{proposition}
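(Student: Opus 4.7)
The plan is as follows. Part (2) is an immediate consequence of (1): when $U$ is simply connected, every loop in $U\setminus\mathcal{V}$ based at $p$ is automatically null-homotopic in $U$, and so by (1) lies in $\Gamma$. Thus the task reduces to part (1), and I would establish it by the classical transversality argument for complements of complex hypersurfaces.

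Let $H:[0,1]^2\to U$ be a continuous null-homotopy of $\gamma$, with $H(t,0)=\gamma(t)$ and $H\equiv p$ on the remaining three sides of $\partial[0,1]^2$. Decompose $\mathcal{V}=\mathcal{V}_{\mathrm{reg}}\sqcup\mathcal{V}_{\mathrm{sing}}$, where $\mathcal{V}_{\mathrm{sing}}$ has complex codimension at least two, hence real codimension at least four, in $U$. Since the boundary of the square already maps into $U\setminus\mathcal{V}$, I would smoothly approximate $H$ relative to $\partial[0,1]^2$ and then apply the Thom transversality theorem: by dimension counting, a generic perturbation misses $\mathcal{V}_{\mathrm{sing}}$ entirely (since $2+2(m-2)<2m$) and meets the smooth complex submanifold $\mathcal{V}_{\mathrm{reg}}$ transversely in a finite set $\{x_1,\dots,x_k\}$ of interior points. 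Because $\mathcal{V}_{\mathrm{reg}}$ is a complex submanifold of real codimension two, a small loop around each $x_j$ maps under $H$ to a standard meridian of $\mathcal{V}$ at the smooth point $H(x_j)\in\mathcal{V}_{\mathrm{reg}}$.

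Next I would convert this local picture into the desired factorization. Choose pairwise disjoint open disks $D_j\subset(0,1)^2$ with $x_j\in D_j$, and a tree $T\subset[0,1]^2\setminus\bigsqcup_{j}D_j$ joining the base vertex $(0,0)$ to a point $y_j\in\partial D_j$ for each $j$. A standard cut-and-paste argument on the square then shows that, as a homotopy class of loops in $U\setminus\mathcal{V}$,
\begin{equation*}
[\gamma]=\prod_{j=1}^{k}[\alpha_j\,\mu_j\,\alpha_j^{-1}],
\end{equation*}
where $\alpha_j:=H\circ(\text{path in }T\text{ from }(0,0)\text{ to }y_j)$ and $\mu_j:=H\circ\partial D_j$. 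By construction, each $\alpha_j$ is a path from $p$ ending arbitrarily close to the smooth point $H(x_j)\in\mathcal{V}_{\mathrm{reg}}$, and each $\mu_j$ is a small loop around $\mathcal{V}$ near that point; thus each factor matches the definition of a generator of $\Gamma$, proving $[\gamma]\in\Gamma$.

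The main obstacle is the transversality step. Since $H$ is only continuous a priori, one must smoothly approximate it in a neighborhood of $\mathcal{V}$ without disturbing the boundary (where $\gamma$ already lies in $U\setminus\mathcal{V}$), and then apply Thom's theorem both to avoid the higher-codimension stratum $\mathcal{V}_{\mathrm{sing}}$ and to achieve transversality with $\mathcal{V}_{\mathrm{reg}}$. Once this is set up, the combinatorial factorization step follows from the standard cell-decomposition argument on the square, and the complex (rather than merely real) codimension-two nature of $\mathcal{V}_{\mathrm{reg}}$ ensures that the small loop $\mu_j$ about each intersection point is honestly a meridian of $\mathcal{V}$ in the sense required.
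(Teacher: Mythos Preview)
Your argument is correct and is essentially the same as the paper's, which also invokes Thom transversality and then defers to the proof of Lemma~3.7 in Huang--Zaitsev \cite{HZ} for the meridian factorization. The only organizational difference is that the paper first removes $\mathrm{Sing}(\mathcal{V})$ from $U$ globally (using that real codimension $\geq 4$ strata do not affect $\pi_1$) and then reduces to the smooth-hypersurface case of \cite{HZ}, whereas you handle both strata in a single transversality step on the null-homotopy; the content is the same.
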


\begin{proof}
	To prove part (1),  we first note that we can replace $U$ by the complement of the singular set $\mathrm{Sing}(\mathcal{V})$ of $\mathcal{V}$ in $U$ without affecting the assumption that $\gamma$ is null-homotopic  in $U$. Indeed, since $\mathrm{Sing}(\mathcal{V})$ is of complex codimension at least two, there is an isomorphism from $\pi_1(U \setminus \mathrm{Sing}(\mathcal{V}), p)$ to $\pi_1(U, p)$ induced by the inclusion map from $U \setminus \mathrm{Sing}(\mathcal{V})$ to $U$ (by Thom's transversality theorem). Then the conclusion follows from the same proof as Lemma 3.7 in \cite{HZ}. 
	Part (2) is a consequence of part (1), and is indeed identical with the statement of Lemma 3.7 in \cite{HZ}. 
\end{proof}

%\begin{lemma} \label{lm2d1}
%Let $U \subset \mathbb{C}^{n}, n \geq 2,$ be a simply connected open subset and
%$\mathcal{S} \subset U$ be a closed complex analytic subset of
%codimension one. Then for $p \in U \setminus \mathcal{S},$ the
%fundamental group $\pi_{1}(U \setminus \mathcal{S},p)$ is generated
%by loops obtained by concatenating (Jordan) path
%$\gamma_{1},\gamma_{2},\gamma_{3},$ where $\gamma_{1}$ connects $p$
%with a point arbitrary close to a smooth point $q_{0} \in
%\mathcal{S},$ $\gamma_{2}$ is a loop around $\mathcal{S}$ near
%$q_{0}$ and $\gamma_{3}$ is $\gamma_{1}$ reversed.
%\end{lemma} 

Let $\Om$ be a (possibly reducible) bounded symmetric domain in $\mathbb{C}^n_z$ and $\rho$ a generalized  generic norm of $\Omega.$
Let $\Sigma:=\Sigma(\Om, \rho) \subseteq \mathbb{C}^{n+1}_{(z, \xi)}$ be the real hypersurface as defined in Definition \ref{D2}. Denote by $\mathcal{M}$ the complexification of $\Sigma$, which is defined by
%$$M=\partial' \Omega_1=  \big\{(w, z) \in \mathbb{C} \times D : |w|^2 = \rho(z, \overline{z}) \big\};$$
\begin{equation}\label{eqncpm}
\big\{(z, \xi, w, \eta) \in \mathbb{C}^{n+1}_{(z, \xi)} \times \mathbb{C}^{n+1}_{(w, \eta)}:  r(z, \xi, w, \eta)=0 \big\},~\text{where}~r(z, \xi, w, \eta):=\rho(z, w)-\xi \eta.
\end{equation}
Write $\mathrm{Reg}(\mathcal{M})$ for the set of smooth points of $\mathcal{M}$ and $\mathrm{Sing}(\mathcal{M}):=\mathcal{M} \setminus \mathrm{Reg}(\mathcal{M}).$ Since $\mathcal{M}$ is irreducible (by Proposition \ref{prpnrir}, $r$ is irreducible), $\mathrm{Reg}(\mathcal{M})$ is connected. Moreover, by a simple computation, we have $\mathrm{Sing}(\mathcal{M}) \subseteq \{\xi=\eta=0\}.$
The following proposition is inspired by \cite{HLX}.

\begin{proposition} \label{prpnptloop}
Let $\Sigma$ and $\mathcal{M}$ be defined as above and let $p_0=(z_{p_0}, {\xi}_{p_0}) \in \Sigma$ (and thus $\xi_{p_0} \neq 0$). Let $\mathcal{S}$ be a complex algebraic hypervariety (variety of codimension one) in $\mathbb{C}^{n+1}$  satisfying $\mathcal{S} \neq \{\xi = 0\}$ and $p_0 \not \in \mathcal{S}.$  Let $\gamma \in \pi_1(\mathbb{C}^{n+1} \setminus \mathcal{S}, p_0)$ be obtained by concatenation of $\gamma_1,\gamma_2,\gamma_3$ as described  in Proposition \ref{prpnhz37} (with $U=\mathbb{C}^n$ and $\mathcal{V}=\mathcal{S}$), where $\gamma_2$ is a small loop around $\mathcal{S}$ near a smooth point $q_0=(z_{q_0}, \xi_{q_0})\in \mathcal{S}$ with $\xi_{q_0}\not =0$.  
Then, with a small perturbation of $p_0$ and $q_0$ if necessary, $\gamma$ can be homotopically perturbed to a  loop $\tilde{\gamma} \in \pi_1(\mathbb{C}^{n+1} \setminus \mathcal{S}, p_0)$ such that there exists a null-homotopic loop $\lambda\in \pi_1(\mathbb{C}^{n+1}\setminus \mathcal{S}, p_0)$ satisfying $(\lambda, \overline{\tilde\gamma}) \subseteq \mathrm{Reg}(\mathcal{M})$ and
$(\tilde\gamma,\overline{{\lambda}}) \subseteq \mathrm{Reg}(\mathcal{M}).$
%Also, for such an element $\hat{\gamma} \in \pi_{1}({\mathbb C}^{n+1}\setminus \mathcal{S}, p_0)$,  after a small perturbation to $\hat{\gamma}$ if needed, we can find a null-homotopic loop in $\hat{\lambda} \in \pi_{1}({\mathbb C}^{n+1} \setminus \mathcal{S}, p_0)$ such that $(\hat{\gamma},\overline{\hat{\lambda}}) \subset \mathcal{M}.$
\end{proposition}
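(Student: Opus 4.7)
The plan is to construct $\lambda$ in the form $\lambda_1 \cdot \lambda_2 \cdot \lambda_1^{-1}$, mirroring the decomposition $\tilde\gamma = \tilde\gamma_1 \cdot \tilde\gamma_2 \cdot \tilde\gamma_1^{-1}$, where $\lambda_1$ will bridge $p_0$ to an auxiliary point $P_1$ while staying on the moving Segre variety, and $\lambda_2$ will be a small null-homotopic loop based at $P_1$. First I would place the data in general position. Since $\mathcal{S} \cup \{\xi = 0\}$ has real codimension two in $\mathbb{C}^{n+1}$, I would perturb $\tilde\gamma$ within its homotopy class in $\mathbb{C}^{n+1} \setminus \mathcal{S}$ to arrange $\xi_{\tilde\gamma(t)} \neq 0$ for every $t$; this single condition already guarantees that any pair $(\lambda(t), \overline{\tilde\gamma(t)})$ lying on $\mathcal{M}$ will automatically belong to $\mathrm{Reg}(\mathcal{M})$, since $\mathrm{Sing}(\mathcal{M}) \subseteq \{\xi = \eta = 0\}$. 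Next I would perturb $q_0$ inside $\mathrm{Reg}(\mathcal{S}) \cap \{\xi \neq 0\}$ so that $Q_{q_0} \not\subseteq \mathcal{S}$; this is possible because the exceptional set $\mathcal{S}^\star := \{W : Q_W \subseteq \mathcal{S}\}$ is a proper algebraic subset of $\mathbb{C}^{n+1}$ — if it were all of $\mathbb{C}^{n+1}$, then $\mathcal{S}$ would contain $\bigcup_W Q_W$, which, using $\rho(z, 0) \equiv 1$, contains every $(z, \xi)$ with $\xi \neq 0$ (take $W = (0, 1/\overline{\xi})$), forcing $\mathcal{S} = \mathbb{C}^{n+1}$. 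Finally I would perturb $p_0$ within $\Sigma$, preserving the defining relation $p_0 \in Q_{p_0}$.

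Setting $q_0' := \tilde\gamma_1(1)$, which lies close to $q_0$, I pick $P_1 \in Q_{q_0'} \setminus \mathcal{S}$, available because $Q_{q_0'}$ is an irreducible $n$-dimensional variety by Proposition~\ref{prpnzsh}(a) and $Q_{q_0'} \cap \mathcal{S}$ is a proper subvariety of it. The loop $\lambda_2$ is defined by holding the first $n$ coordinates equal to those of $P_1$ and solving the Segre equation for the $\xi$-coordinate:
\[
\lambda_2(t) := \left(z_{P_1},\; \frac{\rho\!\left(z_{P_1}, \overline{z_{\tilde\gamma_2(t)}}\right)}{\overline{\xi_{\tilde\gamma_2(t)}}}\right).
\]
This is well defined because $\xi_{\tilde\gamma_2(t)} \neq 0$, it lies on $Q_{\tilde\gamma_2(t)}$ by construction, and it returns to $P_1$ at $t = 0, 1$ precisely because $P_1 \in Q_{q_0'}$. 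Shrinking $\tilde\gamma_2$ homotopically within $\mathbb{C}^{n+1} \setminus \mathcal{S}$ so that it remains a small loop around $\mathcal{S}$ near $q_0$, the image of $\lambda_2$ is confined to a small ball around $P_1$ disjoint from $\mathcal{S}$; thus $\lambda_2$ is null-homotopic in $\mathbb{C}^{n+1} \setminus \mathcal{S}$.

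For the bridging arc I would construct a continuous section $\lambda_1 : [0,1] \to \mathbb{C}^{n+1} \setminus \mathcal{S}$ of the family $t \mapsto Q_{\tilde\gamma_1(t)}$ with $\lambda_1(0) = p_0$ and $\lambda_1(1) = P_1$: each fiber $Q_{\tilde\gamma_1(t)} \setminus \mathcal{S}$ is an irreducible $n$-dimensional variety minus a proper subvariety (hence connected), and combined with local holomorphic trivializations of $\{Q_{\tilde\gamma_1(t)}\}_t$ coming from the implicit function theorem away from $\xi = 0$, a standard patching argument along $[0,1]$ yields such a section. Setting $\lambda := \lambda_1 \cdot \lambda_2 \cdot \lambda_1^{-1}$, by construction $(\lambda(t), \overline{\tilde\gamma(t)}) \in \mathcal{M}$ for every $t$ (the reversed portions pair correctly because both $\lambda$ and $\tilde\gamma$ are reversed simultaneously), and then in $\mathrm{Reg}(\mathcal{M})$ by the first step, while $(\tilde\gamma(t), \overline{\lambda(t)}) \in \mathcal{M}$ follows from the Hermitian-symmetry of the defining equation of $\mathcal{M}$; moreover $\lambda$ is null-homotopic in $\mathbb{C}^{n+1} \setminus \mathcal{S}$ since $\lambda_2$ is and conjugation by $\lambda_1$ preserves null-homotopy. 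The hardest part will be this section construction: ensuring that the continuous family of irreducible $n$-dimensional Segre varieties admits a section with both endpoints prescribed while avoiding the hypersurface $\mathcal{S}$ is the technical crux, and it relies essentially on the irreducibility established in Proposition~\ref{prpnzsh}(a) to keep each fiber connected after removing $\mathcal{S}$.
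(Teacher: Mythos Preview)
Your argument is correct but organizes the construction of $\lambda_1$ differently from the paper. The paper defines a single explicit map $\mathcal{R}_z(w,\eta) := (z,\, \rho(z,w)/\eta)$ and sets $\lambda(t) := \mathcal{R}_{z_{p_0}}(\overline{\tilde\gamma(t)})$ for \emph{all} $t$, so that every point of $\lambda$ has its first $n$ coordinates fixed at $z_{p_0}$; the Segre incidence $(\lambda(t),\overline{\tilde\gamma(t)}) \in \mathcal{M}$ is then automatic and no section-with-prescribed-endpoints argument is needed. The price is two auxiliary results (Lemmas~\ref{lm3d3} and~\ref{lm3d4}) showing that the image of $(z,w,\eta) \mapsto \mathcal{R}_z(w,\eta)$ over $U \times V$, with $U \subset \mathbb{C}^n$ open and $V$ an open piece of $\overline{\mathcal{S}}$, contains an open subset of $\mathbb{C}^{n+1}$; this is what guarantees, after perturbing $p_0 \in \Sigma$ and $q_0 \in \mathcal{S}$, that $\mathcal{R}_{z_{p_0}}(\overline{q_0}) \notin \mathcal{S}$, hence that $\lambda_2$ sits in a contractible region of $\mathbb{C}^{n+1}\setminus\mathcal{S}$. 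The paper then perturbs $\tilde\gamma_1$ so that $\overline{\tilde\gamma_1}$ avoids the pullback hypervariety $\mathcal{V}_0 := \overline{\{(w,\eta):\eta\neq 0,\ \mathcal{R}_{z_{p_0}}(w,\eta)\in\mathcal{S}\}}$, which forces $\lambda_1$ to miss $\mathcal{S}$. Your route instead picks $P_1 \in Q_{q_0'}\setminus\mathcal{S}$ freely and appeals to fiberwise connectedness of $Q_{\tilde\gamma_1(t)}\setminus\mathcal{S}$ to build a section over $[0,1]$ with the two prescribed endpoints; this is valid (a general-position argument in $\mathbb{C}^n \times [0,1]$ suffices, the bad set having real codimension two in each fiber, and one should also note that $\{W:\xi_W\neq 0,\ Q_W\subseteq\mathcal{S}\}$ is finite by Proposition~\ref{prpnzsh}(b), so $\tilde\gamma_1$ can be made to avoid it), but it is exactly the step the paper's fixed-$z$ device bypasses. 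In short, the paper trades your abstract section construction for a one-line explicit formula plus an open-image lemma.
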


{\bf Proof of Proposition \ref{prpnptloop}.} %For $p=(z_p, \xi_p) \in \mathbb{C}^{n+1},$ recall the Segre variety of $\Sigma$ associated to $p$ is given by $Q_p=\{(z, \xi) \in \mathbb{C}^n \times {\mathbb C}: \xi \Ol{\xi_p}= \rho(z, \Ol{z_p}) \}.$ Moreover, $Q_{p}$ is  smooth if $\xi_{p} \neq 0$. 
Define a function $\phi$ on $\mathbb{C}_z^n \times \mathbb{C}_{(w, \eta)}^{n+1}:$ 
$\phi(z, w, \eta)=\frac{\rho(z, w)}{\eta}$ which is well-defined for $\eta \neq 0.$ For each $z \in \mathbb{C}^n,$
We further define a map ${\cal R}_{z}(w, \eta)=(z, \phi(z, w, \eta))$ from $\mathbb{C}_{(w, \eta)}^{n+1}\sm \{\eta = 0\}$ to $\mathbb{C}^{n+1}.$ Note ${\cal R}_{z}(w, \eta)$ is holomorphic in $(w, \eta)$ for $\eta \neq 0$. Furthermore, from the
definition, we see $({\cal R}_{z}(w, \eta), w, \eta) \in \mathcal{M}$ wherever it is defined.
%that ${\cal R}_{z}$ sends $(w, \eta)$ to $Q_{(\Ol{w}, \Ol{\eta})}.$ 
Moreover, when
$p=(z_{p}, \xi_{p}) \in \Sigma$ (which implies $\xi_{p} \neq 0$),   we have, writing $\Ol{p}:=(\Ol{z_p}, \Ol{\xi_p}),$ ${\cal R}_{z_p}(\Ol p)=(z_p, \phi(z_p, \Ol{z_p}, \Ol{\xi_p}))=p$. 

\smallskip

%??(Moreover, for any $p_{1} \neq p_{2}\in {\mathbb C}^{n+1}$ with $w_{p_{1}} \neq 0, w_{p_{2}} \neq 0$ and for any $U \subset \mathbb{C}^{n+1},$
%$Q_{q_{1}} \not\equiv Q_{q_{2}}$ in $U$ unless they both are empty subset. seems correct but needed?) After slightly perturbing   $p_0$ in $M$, if needed, we can assume without loss of generality  that $w_{p_0}\not = 0$.??

To establish Proposition \ref{prpnptloop}, a couple of lemmas are in order.  Denote by $\Ol{\mathcal{S}}$ the conjugate of $\mathcal{S}: \Ol{\mathcal{S}}=\{(w, \eta) \in \mathbb{C}^{n} \times \mathbb{C}: (\Ol{w}, \Ol{\eta}) \in \mathcal{S} \}.$ It is clear that $\Ol{\mathcal{S}}$ is also a complex algebraic hypervariety  in $\mathbb{C}^{n+1}_{(w, \eta)}$ and $\Ol{\mathcal{S}} \neq \{\eta = 0\}$. %We next prove the following lemma:

\smallskip

\begin{lemma}\label{lm3d3}
Let $M, \mathcal{S}$ be as in Proposition \ref{prpnptloop} and $U$ an open set in $\mathbb{C}_z^n$. Let $V$ be an open piece of the hypervariety $\Ol{\mathcal{S}} \subseteq \mathbb{C}^{n+1}_{(w, \eta)}$ such that for every $(w, \eta) \in V$, it holds that $\eta \neq 0.$ Set
%$W$ of $U \times V$ under the map ${\cal R}_{z}(w, \eta):$
$W:=\{{\cal R}_{z}(w, \eta): z \in U, ~(w, \eta) \in V \}.$
%It holds that 
We have $W$ contains an open subset of $\mathbb{C}^{n+1}.$

\end{lemma}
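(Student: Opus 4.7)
The plan is to reduce the lemma to a transversality computation. Consider the holomorphic map $\Phi : U \times V \to \mathbb{C}^{n+1}$ defined by $\Phi(z,w,\eta) = \mathcal{R}_z(w,\eta) = (z, \psi(z,w,\eta))$, where $\psi(z,w,\eta) = \rho(z,w)/\eta$. The image of $\Phi$ is exactly $W$. Since the source $U \times V$ has complex dimension $n + n = 2n \geq n+1$, it suffices (by the constant-rank theorem / open-mapping principle) to show that $d\Phi$ has full rank $n+1$ at some point. The first $n$ components of $\Phi$ are $z$, so $d\Phi$ automatically has rank $n$ in the $z$-direction; hence the task reduces to showing that the restricted differential $d_{(w,\eta)}\psi\big|_{T_{(w,\eta)}V}$ is nonzero for some $(z,w,\eta) \in U \times V$.

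Assume for contradiction this differential vanishes identically. Then for each fixed $z_0 \in U$, $\psi(z_0,\cdot,\cdot)$ is locally constant on $V$. Since $V$ is an open subset of the irreducible component $V_0$ of $\overline{\mathcal{S}}$ containing it, and $\psi(z_0,\cdot,\cdot)$ is rational in $(w,\eta)$ (with pole only along $\{\eta = 0\}$), by the identity principle $\psi(z_0,\cdot,\cdot) \equiv c(z_0)$ on $V_0 \setminus \{\eta=0\}$. Equivalently, $\rho(z_0, w) - c(z_0)\eta$ vanishes on $V_0$ for every $z_0 \in U$. Letting $P(w,\eta) \in \mathbb{C}[w,\eta]$ denote an irreducible polynomial defining $V_0$, Hilbert's Nullstellensatz gives $P \,\big|\, \rho(z_0,w) - c(z_0)\eta$ in $\mathbb{C}[w,\eta]$ for each $z_0 \in U$. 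Since $\rho(z_0,w) - c(z_0)\eta$ has degree at most $1$ in $\eta$, we obtain $P(w,\eta) = A(w)\eta + B(w)$.

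The proof concludes with a short case analysis, exploiting the normalizations $\rho(0,w) \equiv 1$ and $\rho(z,0) \equiv 1$ from Definition \ref{D1}. If $A \not\equiv 0$, comparing the $\eta$-coefficient and constant term in the factorization $\rho(z,w) - c(z)\eta = (A(w)\eta + B(w))Q_z(w)$ gives $-c(z) = A(w)Q_z(w)$, forcing (for any $z$ with $c(z) \neq 0$) that $A$ and $Q_z$ are both constant in $w$; the relation $\rho(z,w) = B(w)Q_z(w)$ then splits $\rho$ as (function of $z$)$\times$(function of $w$), and the two normalizations force $\rho \equiv 1$, contradicting the nontriviality of $\Omega$. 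The case $c(z) \equiv 0$ yields $\rho \equiv 0$, also absurd. If $A \equiv 0$, then $P(w,\eta) = B(w)$, the constraint becomes $B(w) \mid \rho(z,w)$ as polynomials in $(z,w)$; writing $\rho(z,w) = B(w)R(z,w)$ and setting $z = 0$ gives $B(w)R(0,w) \equiv 1$, forcing $B$ to be a nonzero constant and contradicting the fact that $V_0$ is a proper hypervariety. The hypothesis $\overline{\mathcal{S}} \neq \{\eta = 0\}$ is what prevents us from landing in the trivially allowed case $P = \eta$. The main obstacle is simply bookkeeping this case split cleanly; once set up, each contradiction is forced by the distinguished values of $\rho$ at $z = 0$ and $w = 0$.
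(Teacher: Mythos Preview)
Your proof is correct and follows essentially the same route as the paper: both reduce to showing the differential of $(z,w,\eta)\mapsto(z,\rho(z,w)/\eta)$ has full rank somewhere on $U\times V$, argue by contradiction that $\psi$ is constant along $V$ for each fixed $z$, and derive a divisibility relation between the defining polynomial of the irreducible component and $\rho(z,w)-c(z)\eta$. The only organizational difference is in the endgame: the paper first observes separately that $\phi$ is nonconstant on $U\times V$, then picks a zero $z_0$ of $a(z)=c(z)$ to force $b$ to be independent of $\eta$ and reach a contradiction, whereas you run a direct case split on the $\eta$-degree of $P$ and on whether $c\equiv 0$, using the normalizations $\rho(0,\cdot)=\rho(\cdot,0)\equiv 1$; both are short and equivalent. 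Two small tidy-ups: you should explicitly shrink $V$ to lie in the regular part of a single irreducible component (as the paper does) before invoking the identity principle, and in your $A\equiv 0$ case the jump from $B(w)\mid \rho(z_0,w)-c(z_0)\eta$ to $B(w)\mid \rho(z,w)$ needs the one-line observation that $B(w)$ then divides each $\eta$-coefficient separately, forcing $B\mid c(z_0)$ (hence $B$ constant if some $c(z_0)\neq 0$).
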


{\bf Proof of Lemma \ref{lm3d3}.}
By shrinking $V$ if necessary, we can assume $V$ is an open piece of the regular part of (some irreducible component of) $\Ol{\mathcal{S}}$. 
%We regard  $U$ and $\mathcal{S}$ (and thus $V$) to be in the space $\mathbb{C}_z^n$ and $\mathbb{C}_{(\eta, \xi)}^{n+1}$, respectively. 
We first note $\phi(z, w, \eta)=\frac{\rho(z, w)}{\eta}$ is not constant on  $U \times V.$  Indeed by the special form of the generic norm $\rho$ (see the latter part of the proof of Proposition \ref{prpnzsh}),  $\rho(z, w)$ is constant in $z$ if and only if $w=0$. Hence $\phi$ cannot be constant on $U \times V$ if $V \not \subseteq \{w=0\}.$ On the other hand if $V \subseteq \{w=0\}$ (this can only occur when $n=1$), then $\eta$ cannot be constant on $V$. This again yields $\phi$ is not constant on $U \times V$.
We next prove the following claim:

%Define a function $\psi$ from $U \times V$ by $\psi(z, \eta, \xi)=\frac{\rho(z, \xi)}{\eta};$ and a holomorphic map ${\cal T}_{z}(\eta, \xi)=(\psi(z, \eta, \xi), z)$ from $U \times \Ol{V}$ to $\mathbb{C}^{n+1}.$ Note ${\cal T}_{z}$ only differs from ${\cal R}_{z}$ by taking the conjugate of $\eta, \xi.$ Therefore, to establish the lemma, we only need to prove that the image of $U \times \Ol{V}$ under the map ${\cal T}_{z}(\eta, \xi)$ contains an open set of $\mathbb{C}^{n+1}.$ For that, we first prove the following claim.

{\bf Claim.} There is a holomorphic tangent vector field $L$ along $V$ such that $L \phi(z, w, \eta) \not \equiv 0$ on  $U \times V$.

{\bf Proof of Claim.} Seeking a contradiction, we suppose $L \phi(z, w, \eta) \equiv 0$ on $U \times V$  for every holomorphic tangent vector $L$  along $V$. Then for each $z \in U$, $\phi(z, w, \eta)$ is constant in $(w, \eta) \in V$. Fixing $(w^*, \eta^*) \in  V$, we have 
$\phi(z, w,  \eta)=\frac{\rho(z, w^*)}{\eta^*}=a(z)$ on $U \times  V$. Here $a$ is some nonconstant holomorphic polynomial (as $\phi$ is not constant on $U \times V$). Then the following holds for all $z$ in $U$ and thus for all $z \in \mathbb{C}^n.$
$$h(z, w, \eta):=\rho(z, w)-a(z) \eta \equiv 0~\text{for every}~ (w, \eta) \in  V.$$
Write $b(w, \eta)$ for the irreducible polynomial that defines the irreducible component of $\Ol{\mathcal{S}}$ containing $V$.
Then we have $b(w, \eta)$ divides $h(z, w, \eta)$, 
which is impossible. Indeed, assume $b(w, \eta) | h(z, w, \eta).$ Fix any $z_0 \in \mathbb{C}^n$ such that $a(z_0)=0$. As $\rho(\cdot, 0) \equiv 1$, we have $\rho(z_0, w) \not \equiv 0$. The assumption would then lead to $b(w, \eta) | h(z_0, w, \eta),$ i.e., $b(w, \eta) | \rho(z_0, w).$ This yields $b$ does not depend on $\eta,$ and we write $b(w, \eta)=b(w).$ But then in this case we can use the assumption
to see $b(w)$ divides $h(z, w, \eta_1)-h(z, w, \eta_2)$ for any two different complex numbers $\eta_1$ and $\eta_2.$ This would lead to $b(w)| a(z),$ a plain contradiction. This proves the claim. \qed

Writing $z=(z_1, \cdots, z_n)$,  we next compute the complex Jacobian $\mathcal{J}$ of ${\cal R}_{z}(w, \eta)$ as a holomorphic map from $U \times V$ to $\mathbb{C}^{n+1}$ along the complex directions $\frac{\partial}{\partial z_1}, \cdots, \frac{\partial}{\partial z_n}, L.$ Note $Lz=0$ and
$\frac{\partial z_j}{\partial z_i}=\delta_{ij}.$ Combining this fact with the above claim, we see $\mathcal{J}$ is nondegenerate. Hence the image $W$ of $U \times V$  must contain an open subset. This proves Lemma \ref{lm3d3}. \qed

%We next prove the following lemma.

\begin{lemma}\label{lm3d4}
Given any open piece $M$  of  $\Sigma$  and any open piece $V$ of the complex hypervariety $\Ol{\mathcal{S}}$, then there exist a point $p^*=(z^*, \xi^*) \in M$ and a point $(w^*, \eta^*) \in  V$,  such that ${\cal R}_{z^*}(w^*, \eta^*) \not \in \mathcal{S}$.
\end{lemma}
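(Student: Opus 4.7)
The plan is to deduce Lemma \ref{lm3d4} from Lemma \ref{lm3d3} by a straightforward contradiction argument, so the real content of the proof has already been done in Lemma \ref{lm3d3}. First I would let $U$ denote the projection of $M$ onto $\mathbb{C}^n_z$ under $(z,\xi)\mapsto z$. Since $\Sigma=\Sigma(\Omega,\rho)$ is a smooth real hypersurface fibered over $\Omega$ by circles $|\xi|^2=\rho(z,\overline{z})$, and $M$ is an open piece of $\Sigma$, the set $U$ is a nonempty open subset of $\mathbb{C}^n$.

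Next, I would reduce to the setting of Lemma \ref{lm3d3} by arranging that $\eta\neq 0$ on $V$. Because $\overline{\mathcal{S}}\neq\{\eta=0\}$ (by hypothesis on $\mathcal{S}$), the intersection $\overline{\mathcal{S}}\cap\{\eta=0\}$ is a proper complex subvariety of $\overline{\mathcal{S}}$ and hence meets $V$ (which is $n$-dimensional as an open piece of $\overline{\mathcal{S}}$) in a set of strictly smaller dimension. So by shrinking $V$ once, we may assume $\eta\neq 0$ on $V$, preserving the openness of $V$ in $\overline{\mathcal{S}}$.

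Now I would argue by contradiction: suppose that $\mathcal{R}_{z}(w,\eta)\in\mathcal{S}$ for every $z\in U$ and every $(w,\eta)\in V$. Set
\[
W:=\{\mathcal{R}_z(w,\eta) : z\in U,\ (w,\eta)\in V\}.
\]
Then $W\subseteq\mathcal{S}$. But Lemma \ref{lm3d3} applied to $U$ and (the shrunken) $V$ gives that $W$ contains a nonempty open subset of $\mathbb{C}^{n+1}$. This contradicts the fact that $\mathcal{S}$ is a proper complex hypervariety of $\mathbb{C}^{n+1}$, which cannot contain any open subset. Hence there exist $z^*\in U$ and $(w^*,\eta^*)\in V$ with $\mathcal{R}_{z^*}(w^*,\eta^*)\notin\mathcal{S}$. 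By definition of $U$, choose any $\xi^*$ with $p^*:=(z^*,\xi^*)\in M$, completing the proof.

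I do not anticipate any real obstacle: the only points requiring a touch of care are (i) verifying that the projection $U$ is open (immediate from the circle-bundle structure of $\Sigma$), and (ii) shrinking $V$ to ensure $\eta\neq 0$ so that Lemma \ref{lm3d3} applies. Everything else is a one-line invocation of Lemma \ref{lm3d3} and the fact that a proper complex hypervariety has empty interior.
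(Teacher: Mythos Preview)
Your proposal is correct and follows essentially the same approach as the paper's own proof, which simply observes that the projection of $M$ to $\mathbb{C}^n_z$ contains an open set and then invokes Lemma~\ref{lm3d3}. Your write-up is in fact more careful than the paper's: you explicitly justify shrinking $V$ to ensure $\eta\neq 0$ (so that the hypothesis of Lemma~\ref{lm3d3} is met) and spell out the contradiction with $\mathcal{S}$ having empty interior, both of which the paper leaves implicit.
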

{\bf Proof of Lemma \ref{lm3d4}:} 
Note the projection of $M \subseteq \mathbb{C}_{(z, \xi)}^{n+1}$ to $\mathbb{C}^n_z$ contains an open subset of $\mathbb{C}^n_z$. Then the conclusion follows from Lemma \ref{lm3d3}. \qed
%Suppose not. Then ${\cal R}_{z}(w, \eta)  \in \mathcal{S}$ for every $z \in M$ and $(w, \eta)  \in V$.
%By the holomorphicity of ${\cal R}_{z}(w, \eta)$ in $z$, we see the same is true for $z \in \mathbb{C}^n$ and $(w, \eta)  \in V$.
%This, however, contradicts Lemma \ref{lm3d3}. \qed

\smallskip

We continue to prove Proposition \ref{prpnptloop}.  Recall $\Ol{q}$ denotes $(\Ol{z_{q}}, \Ol{\xi_{q}})$ for any $q= (z_q, \xi_q) \in \mathbb{C}^{n+1}.$ By Lemma \ref{lm3d4}, perturbing the points $p_0$ and $q_0$ in Proposition \ref{prpnptloop} if needed, we can assume ${\cal R}_{z_{p_0}}(\Ol{q_0}) \not \in \mathcal{S}.$ Consequently, there exists a small ball $O_{q_0}$ centered at $\Ol{q_0}$ such that ${\cal R}_{z_{p_0}}(O_{q_0}) \cap \mathcal{S} =\emptyset$ and that ${\cal R}_{z_{p_0}}(O_{q_0})$ is contained in some contractible open subset of $\mathbb{C}^{n+1}\setminus \mathcal{S}$. Here ${\cal R}_{z_{p_0}}(O_{q_0}):= \{{\cal R}_{z_{p_0}}(w, \eta): (w, \eta) \in  O_{q_0} \}.$ Back to the loop $\gamma,$ we now deform
$\gamma_1, \gamma_2,
\gamma_3$  to $\wt\gamma_1, \wt\gamma_2, \wt\gamma_3$ respectively.
Here $\wt \gamma_1$ connects $p_0$ with a point  $q^* \approx q_0$ in $O_{q_0}$ , $\wt\gamma_2$ is a loop based at $q^*$ around
$\mathcal{S}$ inside $O_{q_0}$ and sufficiently close to $q_0$, and
$\wt\gamma_3$ is $\wt\gamma_1$ reserved such that the loop
$\wt\gamma$ obtained by concatenation of $\wt\gamma_1,
\wt\gamma_2,\wt\gamma_3$ is the same as $\gamma$ as elements
in $\pi_1(\mathbb{C}^2\setminus \mathcal{S}, p_0)$. 
With a further perturbation if needed, we can assume that the $\xi$-coordinate of points on $\wt\gamma$
never vanishes. %For convenience, we still denote $\wt\gamma_i'$s and $\wt \gamma$ by $\gamma_i'$s and $\gamma$, respectively.

Define $\lambda_1:={\cal R}_{z_{p_0}}(\Ol{\wt\gamma_1})$ and $\lambda_{2}:={\cal R}_{z_{p_0}}(\Ol{\wt\gamma_2}) \subseteq {\cal R}_{z_{p_0}}(O_{q_0})$.
We note $\lambda_{2}$ is null-homotopic in $\mathbb{C}^{n+1}\setminus \mathcal{S}.$
Moreover, the closure of $\{(w, \eta) \in \mathbb{C}^{n+1}: \eta \neq 0, {\cal R}_{z_{p_0}}(w, \eta) \in \mathcal{S}\}$ defines a complex algebraic hypervariety $\mathcal{V}_0$ in $\mathbb{C}^{n+1}_{(w, \eta)}$ with $\ol{p_0} \not \in \mathcal{V}_0.$ Indeed, if we write $\mathcal{S}=\{(z, \xi) \in \mathbb{C}^{n+1}: A(z, \xi)=0\}$ for some nontrivial holomorphic polynomial $A$, then ${\cal R}_{z_{p_0}}(w, \eta) \in \mathcal{S}$ implies $B(w, \eta):=A( z_{p_0}, \phi(z_{p_0},w, \eta))=0$. One can readily see that the latter yields a nontrivial holomorphic polynomial equation in $(w, \eta)$, by annihilating the denominator, that defines $\mathcal{V}_0$. Since $p_0={\cal R}_{z_{p_0}}(\Ol{p_0}) \not \in \mathcal{S}$, we have $\ol{p_0} \not \in \mathcal{V}_0.$   Hence we can slightly perturb $\wt \gamma_1$ (without changing its initial point $p_0$) to make $\ol{\wt \gamma_1}$ avoids $\mathcal{V}_0$, and as a result,  $\lambda_1$ avoids $\mathcal{S}.$

%We  choose a suitable path $\{\xi(t):0 \leq t \leq 1\}$ in $\mathbb{C}$ with $\xi(0)=\xi(1)=\xi_{0}$ such that if  we
%define $\lambda_1={\cal R}_{\xi(t)}(\wt\gamma_1),$ then $\lambda_{1}$
%avoids $\mathcal{S}$(with possibly a slight perturbation of $\wt\gamma_{1}$ fixing endpoints).
With such choices of $\wt \gamma_1$ and $\lambda_1,$ define $\lambda_3$ to be
the reverse of $\lambda_1,$ and $\lambda$ to be the concatenation of
$\lambda_1, \lambda_2, \lambda_3.$ Then $\lambda$ is a
null-homotopic loop in $\pi_1(\mathbb{C}^{n+1}\setminus \mathcal{S},
p_0).$ Moreover, $(\lambda(t), \overline{\wt\gamma(t)}) \in  \mathcal{M}$ for all $t$ by the way it was constructed. As the $\xi$-coordinate of  $\wt\gamma(t)$
is nonzero, $(\lambda(t), \overline{\wt\gamma(t)}) \not \in \{\xi=\eta=0\}.$ Consequently, $(\lambda(t), \overline{\wt\gamma(t)}) \not \in \mathrm{Sing}(\mathcal{M}).$   The last statement
in the proposition follows from the symmetric property of Segre varieties. \qed

\smallskip

The following proposition will be important for  our later proof. To formulate it, we set up a few notations. Let $\Sigma_1$ and $\Sigma_2$ be as at the beginning of $\S$3.  Write $\mathcal{M}_1$ and $\mathcal{M}_2$ for the complexification of  $\Sigma_1$ and $\Sigma_2$, respectively. That is, for $1 \leq i \leq 2$,
\begin{equation}\label{eqncpmi}
\mathcal{M}_i=\big\{(z, \xi, w, \eta) \in \mathbb{C}^{n+1}_{(z, \xi)} \times \mathbb{C}^{n+1}_{(w, \eta)}:  r_i(z, \xi, w, \eta)=0 \big\},~\text{with}~r_i(z, \xi, w, \eta):=\rho_i(z, w)-\xi \eta.
\end{equation}

For $p, q \in \mathbb{C}^{n+1},$ write $Q_{1,p}$ and $Q_{2,q}$ for the Segre variety of $\Sigma_1$ and $\Sigma_2$ associate to $p$ and $q$, respectively.
More precisely,
\begin{equation}\label{eqnbq1}
	Q_{1,p}=\big\{(z, \xi) \in \mathbb{C}^{n+1}:  r_1(z, \xi, \ol{z_p}, \ol{\xi_p})=0 \big\},~\text{where}~p=(z_p, \xi_p) \in \mathbb{C}^{n} \times \mathbb{C}.
\end{equation}
Likewise, $Q_{2,q}$ is defined as above with $r_1$ and $p$ replaced by $r_2$ and $q$, respectively. It is clear that if $p \in \Sigma_1,$ then $p \in Q_{1, p};$ if $q \in \Sigma_2,$ then $q \in Q_{2,q}.$

%(i.e., $$ is defined as in (\ref{eqncpm}) with $\rho$ replaced by $\rho_i$ ).

\begin{proposition}\label{prpnpq4d5}
Let $F$ be a nonconstant holomorphic map from a neighborhood $U$ of some $p \in \Sigma_1$ to $\mathbb{C}^{n+1},$ and maps $U \cap \Sigma_1$ to $\Sigma_2$. If $q \in \mathbb{C}^{n+1}$ is such that $F(Q_{1,p} \cap U) \subseteq Q_{2,q}$, then $q=F(p).$
\end{proposition}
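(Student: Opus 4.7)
My plan has three parts. First, I apply the CR reflection principle. Since $F$ is holomorphic on $U$ and satisfies $F(U\cap \Sigma_1)\subseteq \Sigma_2$, the identity $r_2\bigl(F(z,\xi),\overline{F(z,\xi)}\bigr)\equiv 0$ on $\Sigma_1\cap U$ complexifies (using irreducibility of $r_1$ from Proposition \ref{prpnrir}) to
$$r_2\bigl(F(z,\xi),\,\overline{F(w,\eta)}\bigr) \;=\; a(z,\xi,w,\eta)\,r_1(z,\xi,w,\eta)$$
on $U\times \overline{U}$ for some holomorphic transfer function $a$. Specializing $(w,\eta)=(\overline{z_p},\overline{\xi_p})$ yields $F(Q_{1,p}\cap U)\subseteq Q_{2,F(p)}$, and combining with the hypothesis $F(Q_{1,p}\cap U)\subseteq Q_{2,q}$ gives
$$F(Q_{1,p}\cap U)\;\subseteq\; Q_{2,F(p)}\cap Q_{2,q}.$$

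Next, I would show that $F(Q_{1,p}\cap U)$ contains a germ of $n$-dimensional complex submanifold of $\mathbb{C}^{n+1}$. The restriction $F|_{Q_{1,p}\cap U}\colon Q_{1,p}\cap U\to Q_{2,F(p)}$ is a holomorphic map between two smooth $n$-dimensional complex manifolds (both are graphs of holomorphic functions in the $\xi$ variable, since the relevant $\overline{\xi_p}$ and $\overline{\xi_{F(p)}}$ are nonzero); if it is nonconstant, the open mapping theorem forces its image to be open in $Q_{2,F(p)}$, which in particular contains a germ of $n$-dimensional complex submanifold of $\mathbb{C}^{n+1}$. To rule out constancy, suppose $F|_{Q_{1,p}\cap U}\equiv F(p)$. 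Then by the irreducibility of $r_1(\,\cdot\,,\overline{z_p},\overline{\xi_p})$ from Proposition \ref{prpnzsh}(a), each component of $F-F(p)$ would be divisible by $r_1(\,\cdot\,,\overline{z_p},\overline{\xi_p})$ in the ring of holomorphic functions on $U$, so that $F-F(p) = r_1(\,\cdot\,,\overline{z_p},\overline{\xi_p})\,G$ for some holomorphic $\mathbb{C}^{n+1}$-valued $G$ on $U$. Substituting this decomposition back into the transfer identity and matching the orders of vanishing of both sides along $\mathcal{M}_1$ would be incompatible with the nonconstancy of $F|_{\Sigma_1\cap U}$ (which in turn follows from the identity theorem, since $F$ is nonconstant on $U$ and $\Sigma_1\cap U$ is a generating CR submanifold of $U$) together with the strong pseudoconvexity of $\Sigma_2$.

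Finally, since $F(p)\in \Sigma_2$ forces the $\xi$-coordinate of $F(p)$ to be nonzero, Proposition \ref{prpnzsh}(b) applies with $Z^*=F(p)$ and $\hat{Z}=q$ to the $n$-dimensional germ inside $Q_{2,F(p)}\cap Q_{2,q}$, yielding $F(p)=q$. The main obstacle in this outline is the middle step, specifically ruling out the degenerate scenario in which $F$ collapses the entire Segre variety $Q_{1,p}\cap U$ to the single point $F(p)$; the rest of the argument is a direct combination of the reflection principle with the Segre-variety uniqueness results already established in $\S$2.5.
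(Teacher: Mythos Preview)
Your overall framework---complexify to get $F(Q_{1,p}\cap U)\subseteq Q_{2,F(p)}$, combine with the hypothesis, and invoke Proposition \ref{prpnzsh}(b)---is exactly what the paper does. The problem is in your middle step.

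The claim ``if $F|_{Q_{1,p}\cap U}$ is nonconstant, the open mapping theorem forces its image to be open in $Q_{2,F(p)}$'' is false in several variables. A nonconstant holomorphic map between equidimensional complex manifolds need not be open: for instance $(z_1,z_2)\mapsto (z_1,0)$ is nonconstant from $\mathbb{C}^2$ to $\mathbb{C}^2$ but has one-dimensional image. So even if you successfully ruled out constancy (and your sketch for that is itself vague---the phrase ``matching the orders of vanishing \dots\ would be incompatible with'' is not an argument), you would still not have the $n$-dimensional germ that Proposition \ref{prpnzsh}(b) requires. What you actually need is that $F|_{Q_{1,p}\cap U}$ has full rank at $p$, not merely that it is nonconstant.

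The paper handles this in one clean stroke that you are missing: since $F$ is a nonconstant holomorphic map sending a piece of the strongly pseudoconvex hypersurface $\Sigma_1$ into the strongly pseudoconvex hypersurface $\Sigma_2$, a standard Hopf lemma type argument forces $F$ to be a local biholomorphism on (a possibly shrunk) $U$. Once $F$ is biholomorphic on $U$, the image $F(Q_{1,p}\cap U)$ is automatically an $n$-dimensional complex submanifold, and Proposition \ref{prpnzsh}(b) applies directly. This replaces your entire middle paragraph with a single invocation of a classical fact about CR maps between strongly pseudoconvex hypersurfaces.
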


\begin{proof}
Since $F$ sends a piece of strongly pseudoconvex hypersurface to another, by a standard Hopf lemma type argument and shrinking $U$ if needed, we have  $F$ is a biholomorphic  map in $U$. %Writing $p=(z_p, \xi_p) \in \mathbb{C}^{n} \times \mathbb{C}$, since $p \in \Sigma_1,$ we have $\xi_p \neq 0.$ This yields $Q_{1,p}$ and thus 
We also note $Q_{1,p} \cap U$ is an $n-$dimensional complex manifold. Consequently, so is $F(Q_{1,p} \cap U)$. 
On the other hand, since $F$ maps $\Sigma_1 \cap U$ to $\Sigma_2$,  by a standard complexification argument and shrinking $U$ if needed,  we have $F(Q_{1, p} \cap U) \subseteq Q_{2, F(p)}.$ Write $F(p)=(z^*, \xi^*).$ Since $F(p) \in \Sigma_2,$ we have $\xi^* \neq 0.$ Since we also have $F(Q_{1,p} \cap U) \subseteq Q_{2,q}$,
by part (b) of Proposition \ref{prpnzsh}, we see $q=F(p).$
\end{proof}

We now come back to prove $F$ is rational, where $F$ is as given at the beginning of $\S$3. For convenience,
we set up a few notations. Recall we already know $F$ is algebraic. There is a complex algebraic variety $\mathcal{S}$ in $\mathbb{C}^{n+1}$ (the union of the branching locus and the set of points of indeterminacy of $F$) such that $(F, U)$ admits analytic continuation along every path in $\pi_1(\mathbb{C}^{n+1} \setminus \mathcal{S}, p_0)$ with $p_0 \in \Sigma_1 \cap U.$ Write $\mathcal{P}$ for the subgroup of $\pi_1(\mathbb{C}^{n+1} \setminus \mathcal{S}, p_0) $ that is generated by loops obtained by concatenating paths $\gamma_1, \gamma_2, \gamma_3,$ where $\gamma_1$ connects $p_0$ with a point arbitrarily close to a smooth point $q_0$ of $\mathcal{S}$ with $q_0 \not \in \{\xi=0\}$,  $\gamma_2$
is a loop around $\mathcal{S}$ near $q_0$ and $\gamma_3$ is $\gamma_1$ reserved.  %We will first prove that $F$, defined in a neighborhood of $p$, has no monodromy when extending along any loop in $\mathcal{P}$.  

Fix any $\gamma \in \mathcal{P}.$ 
We will show $F$ is {\it single-valued along $\gamma$}. This means, if we apply analytic continuation to $F$ along $\gamma,$ then we end up with the same germ of map as $F$ at $p_0$. To prove that, we write $F_1$ for the germ of holomorphic map at $p_0$ we get when applying analytic continuation to $F$ along $\gamma.$   By Proposition \ref{prpnptloop}, after slightly perturbing $\gamma$ if necessary, there exists a null-homotopic loop
$\lambda$ in $\pi_{1}(\mathbb{C}^{n+1} \setminus \mathcal{S},p_{0})$ with $(\lambda,\overline{\gamma})$ contained in the regular part
$\mathrm{Reg}(\mathcal{M}_1)$ of $\mathcal{M}_1.$ Since $F(U \cap \Sigma_1) \subseteq \Sigma_2,$  by complexification, we see $(F,\overline{F}):=(F(\cdot),\overline{F(\overline{\cdot})})$ sends a neighborhood
of $(p_{0},\overline{p_{0}})$ in $\mathcal{M}_1$ to $\mathcal{M}_2.$ Applying the analytic continuation along the loop
$(\lambda,\overline{\gamma})$ in $\mathcal{M}_1$ for $\rho(F,\overline{F}),$ one concludes by the uniqueness of analytic
functions
that $(F,\overline{F_1})$ also sends a neighborhood of $(p_{0},\overline{p_{0}})$ in $\mathcal{M}_1$ to $\mathcal{M}_2.$
Consequently, for every  $p \in \Sigma_1$ near $p_{0},$ we get $F(Q_{1, p} \cap U_p) \subseteq Q_{2, F_1(p)}$ for some neighborhood $U_p$ of $p$.   
%In particular,
%$F_{2}$ maps a neighborhood of $p_{0}$ in $M_{\varepsilon}$ into $M'.$
%we have the following:
%\begin{equation}\label{eqndiffb}
%F_{1}(p) \in Q'_{F(p)}, \forall p \in \Sigma_1,~ p\approx p_{0}.
%\end{equation}
%\begin{remark}
%Now applying the holomorphic continuation along the loop $(\lambda,\overline{\gamma})$ in $\mathcal{M}$ for $\rho(F_{2},F),$ we get by uniqueness of analytic functions that $(F_{2},\overline{F_{2}})$ sends a neighborhood of $(p_{0},\-{p_0})$ in $\mathcal{M}$ into $\mathcal{M}'.$ Hence, we also have
%\begin{equation}\label{eqdiffb1}
%F_{2}(p) \in Q'_{F_2(p)}, \forall p \in M,~ p\approx p_{0}.
%\end{equation}
%\end{remark}
By Proposition \ref{prpnpq4d5}, we see $F_1(p)=F(p)$ for $p\approx p_{0}$ on $\Sigma_1.$ This shows $F$ and $F_1$ define the same germ of map
at $p_0.$ Since $\gamma$ is arbitrary, we obtain the following
%$(F, p_0)$ and $(F_2, p_0)$ are identical. 

\begin{lemma}\label{lmnomop}
$F$ is single-valued along every loop in $\mathcal{P}$. 
\end{lemma}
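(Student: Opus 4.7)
The plan is to use a complexification / Segre-variety propagation argument, combined with the topological input of Proposition \ref{prpnptloop}, to reduce the multi-valuedness of $F$ along a loop in $\mathcal{P}$ to a statement about Segre varieties, which is then pinned down by Proposition \ref{prpnpq4d5}. Fix any $\gamma \in \mathcal{P}$ based at $p_0 \in \Sigma_1$ and let $F_1$ be the germ at $p_0$ obtained from $F$ by analytic continuation along $\gamma$; the goal is to show $F_1 \equiv F$ as germs at $p_0$.

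First I would invoke Proposition \ref{prpnptloop} (after a harmless homotopic perturbation of $\gamma$ if needed) to produce a null-homotopic companion loop $\lambda \in \pi_1(\mathbb{C}^{n+1} \setminus \mathcal{S}, p_0)$ such that the diagonal product loop $(\lambda, \overline{\gamma})$ lies entirely in $\mathrm{Reg}(\mathcal{M}_1)$, based at $(p_0, \overline{p_0})$. This topological step is the real crux: it is what makes it possible to move $F$ and its conjugate independently while staying on the smooth locus of the complexified target equation, and it relies essentially on the special structure of the generic norm captured in $\S$2.5.

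Next, I would use complexification. Because $F$ sends an open piece of $\Sigma_1$ into $\Sigma_2$, the holomorphic map $(F(\cdot),\overline{F(\overline{\cdot})})$ sends a neighborhood of $(p_0,\overline{p_0})$ in $\mathcal{M}_1$ into $\mathcal{M}_2$. I would then continue this defining identity analytically along $(\lambda,\overline{\gamma})$ inside $\mathrm{Reg}(\mathcal{M}_1)$. Since $\lambda$ is null-homotopic in $\mathbb{C}^{n+1} \setminus \mathcal{S}$, the first factor returns to $F$; but the second factor is continued along $\overline{\gamma}$ and so returns to $\overline{F_1}$. By the uniqueness of analytic continuation the pair $(F,\overline{F_1})$ again sends a neighborhood of $(p_0,\overline{p_0})$ in $\mathcal{M}_1$ into $\mathcal{M}_2$, which translates back (by restricting $\overline{w}=\overline{p}$ for points $p$ on $\Sigma_1$ near $p_0$) to the Segre inclusion $F(Q_{1,p}\cap U_p)\subseteq Q_{2,F_1(p)}$.

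Finally, the same $F$ trivially satisfies $F(Q_{1,p}\cap U_p)\subseteq Q_{2,F(p)}$ by the original CR condition and complexification. Applying Proposition \ref{prpnpq4d5} (whose content rests on the irreducibility of $r$ and the uniqueness of Segre varieties from part (b) of Proposition \ref{prpnzsh}) forces $F_1(p)=F(p)$ for all $p\in\Sigma_1$ sufficiently close to $p_0$, and hence $F_1\equiv F$ as germs at $p_0$. Since $\gamma\in\mathcal{P}$ was arbitrary, this yields single-valuedness along every loop in $\mathcal{P}$. The main obstacle is not the final Segre-uniqueness step, which is algebraic, but rather the construction of the null-homotopic partner $\lambda$ keeping $(\lambda,\overline{\gamma})\subseteq\mathrm{Reg}(\mathcal{M}_1)$; once that topological input is secured, the rest is a clean bookkeeping of complexified analytic continuation.
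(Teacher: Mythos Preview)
Your proposal is correct and follows essentially the same route as the paper: invoke Proposition \ref{prpnptloop} to obtain the null-homotopic companion loop $\lambda$ with $(\lambda,\overline{\gamma})\subseteq\mathrm{Reg}(\mathcal{M}_1)$, analytically continue the complexified relation $(F,\overline{F}):\mathcal{M}_1\to\mathcal{M}_2$ along $(\lambda,\overline{\gamma})$ to obtain $(F,\overline{F_1}):\mathcal{M}_1\to\mathcal{M}_2$, and then apply Proposition \ref{prpnpq4d5} to conclude $F_1=F$. The only cosmetic difference is that you explicitly restate the inclusion $F(Q_{1,p}\cap U_p)\subseteq Q_{2,F(p)}$, which the paper absorbs into the statement of Proposition \ref{prpnpq4d5}; and, as in the paper, the argument is literally carried out for generators of $\mathcal{P}$, with the passage to arbitrary elements of the generated subgroup being automatic.
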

 
We use the lemma to further  prove

\begin{proposition}\label{prpnmnpi}
$F$ is single-valued along every loop in $\pi_1(\mathbb{C}^{n+1} \setminus \mathcal{S}, p_0).$ Consequently, $F$ extends to a rational map.
\end{proposition}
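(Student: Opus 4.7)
My plan is to apply Proposition \ref{prpnhz37}(2) with $U=\mathbb{C}^{n+1}$ (which is simply connected) and $\mathcal{V}=\mathcal{S}$, which writes every element of $\pi_1(\mathbb{C}^{n+1}\setminus\mathcal{S},p_0)$ as a product of generators of the form $\gamma_1\gamma_2\gamma_3$, where $\gamma_2$ is a small loop around $\mathcal{S}$ at some smooth point $q_0\in\mathcal{S}$ with no a priori restriction on $\xi_{q_0}$. Lemma \ref{lmnomop} already kills the monodromy of $F$ along every such generator with $\xi_{q_0}\neq 0$, i.e., along every element of the subgroup $\mathcal{P}$. The plan is then to match the remaining generators into $\mathcal{P}$, and invoke Webster's algebraicity to promote single-valuedness to rationality.

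First I would reduce the problem to a single troublesome component of $\mathcal{S}$. If a smooth point $q_0\in\mathcal{S}$ with $\xi_{q_0}=0$ lies on an irreducible component $\mathcal{S}_i\neq\{\xi=0\}$, then $\mathrm{Reg}(\mathcal{S}_i)\setminus\{\xi=0\}$ is Zariski-open and dense in $\mathcal{S}_i$. Choosing a nearby $q_0'\in\mathrm{Reg}(\mathcal{S}_i)$ with $\xi_{q_0'}\neq 0$, and joining the base point of $\gamma_2$ to a base point near $q_0'$ by a path $\alpha$ in a tubular neighborhood of $\mathcal{S}_i$ avoiding $\mathcal{S}$, a standard deformation yields $\gamma_1\gamma_2\gamma_1^{-1}=(\gamma_1\alpha)\gamma_2'(\gamma_1\alpha)^{-1}$ in $\pi_1(\mathbb{C}^{n+1}\setminus\mathcal{S},p_0)$, where $\gamma_2'$ is a small loop around $\mathcal{S}_i$ near $q_0'$; this new loop lies in $\mathcal{P}$. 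The only generators possibly outside $\mathcal{P}$ are thus loops around the hyperplane $\{\xi=0\}$, and these arise only if $\{\xi=0\}$ is itself an irreducible component of $\mathcal{S}$.

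To dispose of this exceptional case I would argue directly that $F$ has trivial monodromy around $\{\xi=0\}$. The approach is to run a variant of the monodromy argument of Lemma \ref{lmnomop}: the complexified identity
\[
F_{n+1}(z,\xi)\,\overline{F_{n+1}}(w,\eta)=\rho_2\bigl(F'(z,\xi),\overline{F'}(w,\eta)\bigr),
\]
valid on $\mathcal{M}_1$ near $(p_0,\overline{p_0})$, should be continued along a lift of the loop $\gamma$ into $\mathrm{Reg}(\mathcal{M}_1)$, and then combined with the uniqueness statement of Proposition \ref{prpnpq4d5} to force the continued germ $F_1$ at $p_0^*\approx p_0$ to agree with $F$. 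The obstacle is that the reflection device $\mathcal{R}_z(w,\eta)=(z,\rho(z,w)/\eta)$ underlying Proposition \ref{prpnptloop} produces a lift $\lambda$ which itself loops around $\{\xi=0\}$; consequently $\lambda$ is not null-homotopic in $\mathbb{C}^{n+1}\setminus\mathcal{S}$ and its own continuation introduces an additional branch $F_2$ into the identity. Extracting $F_1=F$ from the resulting system will require exploiting that the branching of $F$ along $\{\xi=0\}$ is of finite order and that iterating the continuation along suitably chosen lifts produces consistent algebraic relations among the branches that admit only the trivial solution.

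I expect this last step to be the main technical obstacle, as it is exactly where the reflection argument that worked for $\mathcal{P}$ ceases to apply verbatim. Once it is resolved, $F$ is single-valued along every loop in $\pi_1(\mathbb{C}^{n+1}\setminus\mathcal{S},p_0)$, so the algebraic extension defines a single-valued algebraic map on $\mathbb{C}^{n+1}\setminus\mathcal{S}$. Since a single-valued algebraic function with at most hypersurface singularities is rational, $F$ extends to a rational map on $\mathbb{C}^{n+1}$, completing the proof of Proposition \ref{prpnmnpi}.
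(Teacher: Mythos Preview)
Your reduction is correct and matches the paper: the case analysis into ``$\{\xi=0\}$ is not a component of $\mathcal{S}$'' versus ``$\{\xi=0\}\subseteq\mathcal{S}$'' is exactly the paper's split into cases (a) and (b), and your handling of case (a) via perturbation of $q_0$ off $\{\xi=0\}$ is the paper's argument verbatim.

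The gap is case (b). You correctly diagnose that the reflection map $\mathcal{R}_z$ underlying Proposition~\ref{prpnptloop} fails here, but your proposed workaround --- exploiting finite-order branching and iterating continuations to derive ``consistent algebraic relations among the branches that admit only the trivial solution'' --- is not an argument, and there is no reason to expect it to succeed without further input. Indeed, Example~\ref{e22} shows that if $\rho_1$ is merely a \emph{generalized} generic norm then $F$ can genuinely branch around $\{\xi=0\}$; so something specific to the generic-norm hypothesis must enter, and your proposal never invokes it.

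The paper's route is entirely different and geometric. It uses Lemma~\ref{lmstom} (this is where the generic-norm hypothesis on $\rho_1$ is essential) to locate a smooth, \emph{minimal}, weakly pseudoconvex boundary point $p^*$ of $D_1$ lying in $\partial\Omega_1\times\{0\}\subseteq\{\xi=0\}$. After analytically continuing $F$ along $\Sigma_1$ to bring $p_0$ close to $p^*$, one constructs a small loop $\beta\subseteq\partial D_1$ around $\{\xi=0\}$ near $p^*$ that generates $\pi_1(\mathbb{C}^{n+1}\setminus\{\xi=0\})$, and writes an arbitrary $\alpha$ as $\beta^k\cdot\mu$ with $\mu\in\mathcal{P}$ via Proposition~\ref{prpnhz37}(1). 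Single-valuedness along $\beta$ is then obtained not by reflection but by invoking Proposition~3.10 of Huang--Zaitsev \cite{HZ}, which at a minimal real-analytic boundary point yields a neighborhood $W$ of $p^*$ in which any two branches of the analytic continuation satisfy the Segre inclusion $F(Q_{1,q}\cap U_q)\subseteq Q_{2,F_1(q)}$; Proposition~\ref{prpnpq4d5} then forces $F_1=F$. The minimality of $\partial D_1$ at $p^*$ is the crucial ingredient you are missing.
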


{\bf Proof of Proposition \ref{prpnmnpi}.} We write $\mathcal{W}:=\{(z, \xi):\xi=0\} \subseteq \mathbb{C}^{n+1}$, and split the proof into two cases:

(a). Assume $\mathcal{W} \not \subseteq \mathcal{S}.$ By Proposition \ref{prpnhz37}, $\pi_1(\mathbb{C}^{n+1} \setminus \mathcal{S}, p_0)$ is generated by loops obtained by concatenating paths $\gamma_1, \gamma_2, \gamma_3,$ where $\gamma_1$ connects $p_0$ with a point arbitrarily close to a smooth point $q_0$ of $\mathcal{S}$, $\gamma_2$ is a loop around $\mathcal{S}$ near $q_0$ and $\gamma_3$ is $\gamma_1$ reserved. Since $\mathcal{W} \not \subseteq \mathcal{S},$ we can always perturb $\gamma_i'$s such that $q_0 \not \in \{\xi=0\}$. Consequently, $\pi_1(\mathbb{C}^{n+1} \setminus \mathcal{S}, p_0)=\mathcal{P}.$  Then by Lemma \ref{lmnomop}, $F$ is single-valued along every loop in $\pi_1(\mathbb{C}^{n+1} \setminus \mathcal{S}, p_0).$

(b). Next we assume $\mathcal{W} \subseteq \mathcal{S}$. Fix any loop $\alpha \in \pi_1(\mathbb{C}^{n+1} \setminus \mathcal{S}, p_0).$ %By Proposition \ref{prpnhz37}, we can assume $\alpha$ is obtained by concatenating paths $\alpha_1, \al_2, \al_3,$ where $\al_1$ connects $p$ with a point arbitrarily close to a smooth point $q_0$ of $\mathcal{S}$ with $q_0 \in \mathcal{W}$,  $\al_2$
%is a loop around $\mathcal{S}$ near $q_0$ and $\al_3$ is $\al_1$ reserved.
We will show  $F$ is single-valued along $\alpha.$    

Note the topological boundary of $D_1:=D_1(\Om_1, \rho_1)$ in $\mathbb{C}^{n+1}$ is given by $\partial D_1=\{(z, \xi) \in \Ol{\Omega_1} \times \mathbb{C}: |\xi|^2 = \rho_1(z, \overline{z}) \}$ and $\partial D_1 \cap \mathcal{W}=\{(z, 0): z \in \p \Om_1 \}.$ By Lemma \ref{lmstom}, there is an open subset $T$ of $\partial D_1 \cap \mathcal{W}$ such that every point in $T$ is a smooth and minimal boundary point of $D_1.$ Consequently, there exists some point $p^*=(z^*, 0) \in \mathcal{W}$ such that the following hold:

(I) $p^*$ is a smooth and minimal boundary point of $D_1;$ 

(II)  $p^*$ is a smooth point of $\mathcal{S}$;

(III) $\mathcal{W}$ intersects $\partial D_1$ transversally at $p^*$.

We can then find some small loop $\beta_0$ around $p^*$  in the real tangent plane $T_{p^*} (\partial D_1)$  such that $\beta_0$ stays away from $\mathcal{S}$ and generates the fundamental group $\pi_1(\mathbb{C}^{n+1} \setminus \mathcal{W}) \cong \mathbb{Z}.$
Furthermore, %we note any loop in $T^{(1,0)}_{p^*} \Sigma$ can be easily homotopically deformed into a loop on $\Sigma.$ In particular, 
$\beta_0$ can be deformed into a loop $\beta$ on $\partial D_1,$ via the natural local diffeomorphism between $T_{p^*} (\partial D_1)$ and $\partial D_1$ near $p^*$.  By choosing $\beta_0$ sufficiently close to $p^*,$ we can make
$\beta$ arbitrarily close to $p^*$ and stays away from $\mathcal{S}$. To make use of the loop $\beta$, we will need:

{\bf Claim.} Let $(F, U)$ be as at the beginning of $\S$3 and let $\mathcal{S}$ be as above. Then there exists a complex hypervariety $\mathcal{S}_1$ of $\mathbb{C}^{n+1}$ such that for any $q \in \Sigma_1 \setminus (\mathcal{S} \cup \mathcal{S}_1)$ , there is a path $\gamma$ in $\Sigma_1 \setminus (\mathcal{S} \cup \mathcal{S}_1)$ from $p$ to $q$ such that $F$ extends holomorphically along $\gamma$ and the analytic continuation of $F$ along $\gamma$ sends (open pieces of) $\Sigma_1$ to $\Sigma_2.$

{\bf Proof of Claim.} %(This is the last claim in the handwritten notebook.) 
We first note since $\Sigma_1$ is connected, so is $\Sigma_1 \setminus \mathcal{S}.$ Then for any $q \in  \Sigma_1 \setminus \mathcal{S},$ there is a path $\tau$ in $\Sigma_1 \setminus \mathcal{S}$ connecting $p$ and $q$. We can therefore continue  $(F, U)$ holomorphically along $\tau$ to get a holomorphic map at $q$. Write $\{F_{q, 1}, \cdots, F_{q, m}\}, m \geq 1,$ for all the germs of maps at $q$ that can be obtained from such holomorphic continuations of $(F,U)$ along paths in $\Sigma_1 \setminus \mathcal{S}$ (by the connectedness of $\Sigma_1 \setminus \mathcal{S}$, the value of $m$ is independent of the choice of $q$).  For each $j,$ write $F_{q, j}=(F_{q, j}^1, \cdots, F_{q, j}^{n}, F_{q, j}^{n+1})$ and $G_q=\prod_{j=1}^m F_{q, j}^{n+1}.$ Note $G_q$ is well-defined for every $q \in \Sigma_1 \setminus \mathcal{S},$ this induces a holomorphic map, denoted by $G$, in a neighborhood $O$ of $\Sigma_1 \setminus \mathcal{S}$  Note for each $j$, the last component $F_{q, j}^{n+1}$ of $F_{q, j}$ cannot be identically zero, for if it were identically zero, then the same would hold for $F$ (as $F_{q, j}$ is obtained from $F$ by analytic continuation), a plain contradiction. Consequently, $G$ is
not identically zero. Hence $\{G=0\}$ defines a proper complex subvariety (if not empty) of $O$, which will be denoted by $\mathcal{S}_1.$

Since $\Sigma_1 \setminus (\mathcal{S} \cup \mathcal{S}_1)$ is also connected, if $q \in \Sigma_1 \setminus (\mathcal{S} \cup \mathcal{S}_1),$ then there is a path $\gamma$ in $\Sigma_1 \setminus (\mathcal{S} \cup \mathcal{S}_1)$ connecting $p$ and $q$. 
%Finally to prove the claim, we only need to show  the image of the extension of $F$ along $\gamma$, still denoted by $F$, stays in $\Sigma_2.$ To see this, 
Next since $\gamma$ does not intersect $\mathcal{S}_1,$ the image of $F$ along $\gamma$ cannot intersect $\mathcal{W}.$ On the other hand, as  $F$ maps $U\cap \Sigma_1$ to $\Sigma_2,$ by analyticity,  the germ of map $(F_t, U_t)$ at every $\gamma(t)$ obtained by analytic continuation must satisfy $F_t(U_t \cap \Sigma_1) \subseteq X:=\{(z, \xi) \in \mathbb{C}^{n+1}: |\xi|^2= \rho_2(z, \ol{z}) \}.$ Consequently, $F_t(U_t \cap \Sigma_1)$ must be lie in the connected component of $X \setminus \mathcal{W}$ containing $F(U\cap \Sigma_1),$ which is precisely $\Sigma_2.$
%Now $F \circ \gamma$ is a curve in $\mathbb{C}^{n+1}$ with initial point in $\Sigma_2$ satisfies $r_2(F \circ \gamma, \Ol{F \circ \gamma}) \equiv 0$ and cannot intersect with the boundary of $\Sigma_2$ (which is $\partial D_2 \cap \mathcal{W}.$) Therefore $F \circ \gamma$ must stay in $\Sigma_2.$ 
This proves the claim. \qed

\smallskip

By the claim above,  we can choose a path $\tau$ connecting $p_0$ to a point $\hat{p}$ arbitrarily close to $p^* \in \mathcal{W}$ such that the germ of map $\hat{F}$ at $\hat{p}$ obtained by analytic continuation of $F$ along $\tau$ also sends (an open piece of) $\Sigma_1$ to $\Sigma_2.$ Since $F$ is nonconstant, so is $\hat{F}.$  Then replacing $(F, p_0)$ by $(\hat{F}, \hat{p})$ if needed,  we can make $p_0$ arbitrarily close to $p^* \in \mathcal{W}$. Also by perturbing $\beta$ and $p_0$, we can indeed assume $p_0 \in \beta.$ 
Note  $\alpha \subseteq A:=\mathbb{C}^{n+1} \setminus \mathcal{W}$,
%and they do not intersect  $\mathcal{S}$. 
and $\beta$ is a generator of $\pi_1(A, p_0) \cong \mathbb{Z}.$ Hence 
there exists $k \in \mathbb{Z},$ such that %$\al^{-1} \circ \be^k$ %(and 
$\be^{-k} \circ \al$
is null homotopic in $\pi_1(A, p_0)$.
Note also $\al, \be \in \pi_1(A \setminus \mathcal{S}, p_0)=\pi_1(\mathbb{C}^{n+1} \setminus \mathcal{S}, p_0)$ and let $\mathcal{P}$ be as defined right after Proposition \ref{prpnpq4d5}.
%and  (we choose $\al$ and $\be$ both to travel around $\mathcal{W}$ only once). 
Applying part (1) of Proposition \ref{prpnhz37} (with  $U=A,\mathcal{V}=A \cap \mathcal{S}$ and $p=p_0$ there) and a similar argument as in part (a), we have $\be^{-k} \circ \al \in \mathcal{P}$.
%as an element in $\pi_1(\mathbb{C}^{n+1} \setminus \mathcal{S}, p_0),$ belongs to $\mathcal{P}$.
%$\be^{-k} \circ \al \in \mathcal{P}.$ 
Hence there exists some $\mu \in \mathcal{P}$ such that $\al =\be^k \circ \mu.$ By Lemma \ref{lmnomop}, $F$ is single-valued along $\mu$. Consequently,  to prove $F$ is single-valued along $\alpha$, it suffices to show $F$ is single-valued along $\be$.
%we get a different branch when we apply holomorphic continuation of  $F$ along $\al$ if and only if we get a different branch when we apply holomorphic continuation of  $F$ along $\be.$ Assume at first $F$, which is a holomorphic map defined a neighborhood $V$ of $p$, sending $V \cap \Sigma_1$ to $\Sigma_2.$ 
For that, we denote by $F_1$ the germ of holomorphic map (at $p_0$) we get when applying analytic continuation to  $F$ along $\be$. We will show $F$ and $F_1$ define the same germ of maps at $p_0$. Shrinking $U$ if needed, we assume $F_1$ is also defined on $U$.

Recall $p^*$ is a smooth and minimal boundary point of $D_1$. We take a small neighborhood $\hat{O}$ of $p^*$ in $\mathbb{C}^{n+1}$ such that $\partial D_1 \cap \hat{O}$ is a smooth real analytic hypersurface. Note Proposition 3.10 in Huang-Zaitsev \cite{HZ} can be applied to (the complex submanifold  $\mathcal{F}$ of $(\mathbb{C}^{n+1} \setminus \mathcal{S}) \times \mathbb{C}^{n+1}$ corresponding to) the analytic extension of $F$ in $\mathbb{C}^{n+1} \setminus \mathcal{S}$. (For the readers' convenience, we remark that $p_0, \Om, V, M$ and $M'$ in Proposition 3.10 of \cite{HZ} can be taken to be $p^*,$ $\mathbb{C}^{n+1},\mathbb{C}^{n+1}, \partial D_1 \cap \hat{O}$ and $\{(z, \xi) \in \mathbb{C}^{n+1}: |\xi|^2= \rho_2(z, \ol{z}) \}$, respectively in our setting.) Therefore
there is a neighborhood $W$ of $p^*$ (depending only on $\p D_1 \cap \hat{O}$ and $p^*$) such that the conclusion of Proposition 3.10 in \cite{HZ} holds. By making $p_0$ sufficiently close to $p^*$ and further shrinking $\beta$ and $U$ if needed, we can assume $\beta$ and $U$ are in $W$. Then by the conclusion of Proposition 3.10 in \cite{HZ},   for every $q \in \Sigma_1$ near $p_0$, it holds that $F(Q_{1, q} \cap U_q) \subseteq Q_{2, F_1(q)}$ for some small neighborhood $U_q$ of $q$. This implies, by Proposition \ref{prpnpq4d5}, $F_1(q)=F(q)$ for $q \approx p_0$ on $\Sigma_1,$ and therefore $F$ and $F_1$ give the same germ of map at $p_0$. Hence $F$ is single-valued along $\beta$, and thus along $\al$. Since $\al$ is arbitrary, we see $F$ is single-valued along every loop in $\pi_1(\mathbb{C}^{n+1} \setminus \mathcal{S}, p_0).$

By parts (a) and (b), the analytic continuation of $F$ is single-valued in any case. Since $F$ is algebraic, we see $F$ must be rational. This finishes the proof of Proposition \ref{prpnmnpi}. \qed

\begin{remark}
We remark that in the course of proving the rationality of $F$, we only used the condition that $\rho_1$ is a generic norm of $\Om_1$ (in particular to make Lemma \ref{lmstom} hold), and the assumption on $\rho_2$ can be relaxed to be merely a generalized generic norm of $\Om_2$. But we need $\rho_2$ to be a generic norm to achieve the rationality of $F^{-1}$.
\end{remark}

As $F$ is biholomorphic near $p_0$,  $F$ has a local inverse $F^{-1}$ defined in some neighborhood $W$ of $F(p_0)$.  Since  $F^{-1}$ maps $W \cap \Sigma_2$ to $\Sigma_1$, by applying the same argument as above to $F^{-1}$, we get $F^{-1}$ extends to a rational map as well.

\subsection{Metric-preserving property of the map and finishing the proof}

In $\S3.1,$ we have proved that $F$ and $F^{-1}$  are rational. There exists an affine algebraic complex subvariety $\mathcal{V}_1$ (respectively, $\mathcal{V}_2$) of $\mathbb{C}_{(z, \xi)}^{n+1},$ which precisely consists of points $p \in \mathbb{C}_{(z, \xi)}^{n+1}$ at which $F$ (respectively, $F^{-1}$) fails to be a local holomorphic immersion. Since $F$ is biholomorphic on $U$, we have $U \cap \mathcal{V}_1 =\emptyset.$  It is clear that $F: \mathbb{C}^{n+1} \setminus \mathcal{V}_1 \rightarrow \mathbb{C}^{n+1} \setminus \mathcal{V}_2$ is a biholomorphic map, with $F^{-1} \circ F$ equals the identity map on  $\mathbb{C}^{n+1} \setminus \mathcal{V}_1$ and $F \circ F^{-1}$ equals the identity map on $\mathbb{C}^{n+1} \setminus \mathcal{V}_2.$ We pause to introduce the following lemma.

\begin{lemma}\label{lmhzx}
	Let $\mathcal{V}$ and $\hat{\mathcal{V}}$ be affine algebraic complex subvarieties of $\mathbb{C}^{n+1}.$ Let $h \in \mathbb{C}[Z, X]$, with $Z, X \in \mathbb{C}^{n+1},$ be such that its zero locus satisfies 
	$$\{(Z, X) \in \mathbb{C}^{n+1} \times \mathbb{C}^{n+1}: h(Z, X)=0\} \subseteq (\mathcal{V} \times \mathbb{C}^{n+1}) \cup (\mathbb{C}^{n+1} \times \hat{\mathcal{V}}).$$
	Then $h(Z, X)=h_1(Z)h_2(X)$ for some $h_1 \in \mathbb{C}[Z]$ and $h_2 \in \mathbb{C}[X].$ Moreover, $\{h_1(Z)=0\} \subseteq \mathcal{V}$ and $\{h_2(X)=0\} \subseteq \hat{\mathcal{V}}.$
\end{lemma}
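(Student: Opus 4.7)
The plan is to analyze the factorization of $h$ into irreducibles in the unique factorization domain $\mathbb{C}[Z, X]$, and to show that every irreducible factor depends on $Z$ alone or on $X$ alone. I would first dispose of the trivial situations in which $\mathcal{V} = \mathbb{C}^{n+1}$ or $\hat{\mathcal{V}} = \mathbb{C}^{n+1}$ (taking $h_2 = 1$ or $h_1 = 1$, respectively), and thereafter assume both are proper subvarieties with irreducible decompositions $\mathcal{V} = V_1 \cup \cdots \cup V_k$ and $\hat{\mathcal{V}} = \hat{V}_1 \cup \cdots \cup \hat{V}_l$.

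For any irreducible factor $p \in \mathbb{C}[Z,X]$ of $h$, its zero locus $\{p = 0\}$ is itself irreducible of dimension $2n+1$. Since each product $V_i \times \mathbb{C}^{n+1}$ and $\mathbb{C}^{n+1} \times \hat{V}_j$ is an irreducible closed set, the containment
$$\{p=0\} \subseteq \bigcup_i (V_i \times \mathbb{C}^{n+1}) \cup \bigcup_j (\mathbb{C}^{n+1} \times \hat{V}_j)$$
forces $\{p=0\}$ to sit inside a single piece, say $V_i \times \mathbb{C}^{n+1}$ (the other case being symmetric). A dimension count gives $\dim V_i \geq n$, so $V_i$ must be an irreducible hypersurface in $\mathbb{C}^{n+1}_Z$ cut out by an irreducible polynomial $g(Z)$. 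The key step uses the fiber dimension theorem for the dominant morphism $\pi_1 : \{p=0\} \to V_i$ obtained by $Z$-projection: since each fiber is contained in a copy of $\mathbb{C}^{n+1}_X$ and so has dimension at most $n+1$, the generic fiber has dimension exactly $(2n+1) - n = n+1$ and therefore fills the full copy $\{Z_0\} \times \mathbb{C}^{n+1}_X$. In other words, $p(Z_0, X) \equiv 0$ in $X$ for generic $Z_0 \in V_i$, so every coefficient of $p$ viewed as a polynomial in $X$ over $\mathbb{C}[Z]$ vanishes on $V_i$ and is therefore divisible by $g(Z)$. Thus $g(Z) \mid p(Z, X)$ in $\mathbb{C}[Z, X]$, and the irreducibility of $p$ forces $p$ to be a nonzero scalar multiple of $g(Z)$, in particular a polynomial in $Z$ alone.

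Collecting these conclusions, I would write $h(Z, X) = h_1(Z)\, h_2(X)$, where $h_1$ is the product (with multiplicities) of those irreducible factors of $h$ depending on $Z$ alone, and $h_2$ is the analogous product for the remaining factors, which depend on $X$ alone. Each irreducible factor of $h_1$ cuts out a hypersurface contained in some $V_i \subseteq \mathcal{V}$ (and similarly for $h_2$), yielding the desired inclusions $\{h_1 = 0\} \subseteq \mathcal{V}$ and $\{h_2 = 0\} \subseteq \hat{\mathcal{V}}$. The main technical content is concentrated in the fiber dimension computation; the dimensions leave essentially no slack, and combined with the irreducibility of $p$ the argument should run without further obstacles beyond the standard unique factorization bookkeeping.
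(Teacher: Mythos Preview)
Your proof is correct and follows essentially the same approach as the paper: factor $h$ into irreducibles in $\mathbb{C}[Z,X]$, observe that the zero locus of each irreducible factor is an irreducible hypersurface contained in a single piece $V_i \times \mathbb{C}^{n+1}$ or $\mathbb{C}^{n+1} \times \hat{V}_j$, and conclude that the factor lies in $\mathbb{C}[Z]$ or $\mathbb{C}[X]$. Your fiber dimension argument is slightly more elaborate than necessary---once $\{p=0\} \subseteq V_i \times \mathbb{C}^{n+1}$ with $\dim V_i = n$, both sides are irreducible of dimension $2n+1$ and hence equal, which immediately gives $p(Z_0,X)\equiv 0$ for every $Z_0 \in V_i$---but the logic is sound.
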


\begin{proof}
	Assume $h$ is nonconstant.  Let $g$ be an irreducible factor of $h$. Then the irreducible subvariety  $\{g(Z, X)=0\}$ must be an irreducible component of either $\mathcal{V} \times \mathbb{C}^{n+1}$ or $\mathbb{C}^{n+1} \times \hat{\mathcal{V}}.$
	In the former case we have $g \in \mathbb{C}[Z]$; in the latter case $g \in  \mathbb{C}[X].$ This proves the lemma.
\end{proof}

We also pause to fix some notations. For $Z \in \mathbb{C}^m$,  denote by $\Ol{Z}$ the vector obtained from $Z$ by taking the conjugate of each component.  Again for $V \subseteq  \mathbb{C}^{m}$,  denote by $\Ol{V}$ the set $\{\Ol{Z} \in \mathbb{C}^{m}: Z \in V \};$ note if $V$ is an affine algebraic complex subvariety, then so is $\Ol{V}$. 
For a map $H: V \subseteq \mathbb{C}^m \rightarrow \mathbb{C}^k$, define a map $\Ol{H}$ from $\Ol{V}$ to $\mathbb{C}^k$ by $\Ol{H}(\xi):=\Ol{H(\Ol{\xi})}$.

We continue to discuss the map $F$. Write $W_1:= (\mathbb{C}_{(z, \xi)}^{n+1} \setminus \mathcal{V}_1) \times (\mathbb{C}_{(w, \eta)}^{n+1} \setminus \Ol{\mathcal{V}_1})$ and $W_2:= (\mathbb{C}_{(z, \xi)}^{n+1} \setminus \mathcal{V}_2) \times (\mathbb{C}_{(w, \eta)}^{n+1} \setminus \Ol{\mathcal{V}_2}).$ Write $Z=(z, \xi)$ and $X=(w, \eta).$
Define a rational map $\mathcal{F}$ by  $\mathcal{F}(Z, X):=(F(Z), \Ol{F}(X))$ with $Z \in \mathbb{C}^{n+1} \setminus \mathcal{V}_1$ and $X \in \mathbb{C}^{n+1} \setminus \Ol{\mathcal{V}_1}.$
Then $\mathcal{F}$ is a biholomorphism from $W_1$ to $W_2$. By Proposition \ref{prpnrir}, the polynomial $r_i$ as defined in (\ref{eqncpmi}) is irreducible in $\mathbb{C}[z, w, \xi, \eta]$ for $1 \leq i \leq 2.$ Consequently, the regular part $\mathrm{Reg}(\mathcal{M}_i)$ of $\mathcal{M}_i$ is connected (where $\mathcal{M}_i$ is as defined in (\ref{eqncpmi})). Moreover, $\mathrm{Reg}(\mathcal{M}_i) \cap W_1$ is nonempty (cf. Lemma \ref{lmhzx}), and connected. Since $F$ maps an open piece of $\Sigma_1$ to $\Sigma_2,$  by a standard complexification argument and the connectedness of $\mathrm{Reg}(\mathcal{M}_1) \cap W_1$, we have $\mathcal{F}=(F, \Ol{F})$ maps $\mathcal{M}_1 \cap W_1$ to $\mathcal{M}_2 \cap W_2, $ and maps $\mathrm{Reg}(\mathcal{M}_1) \cap W_1$ to $\mathrm{Reg}(\mathcal{M}_2) \cap W_2.$ One can readily see the latter two maps are both onto by considering the inverse map $(F^{-1}, \Ol{F^{-1}}).$   

We next prove the following lemma. Write $R(Z, X):= \frac{r_2(F(Z), \Ol{F}(X))}{r_1(Z, X)}.$ Since $F$ and $\Ol{F}$ are rational in $Z$ and $X$ respectively, it is clear that $R(Z, X)$ is rational in $Z$ and $X$.

\begin{lemma}\label{lmrzx}
%We have the following statements hold:
The function $R(Z, X)$ is holomorphic and everywhere nonzero  in $W_1$. Consequently, 
writing $R(Z, X)=\frac{P(Z, X)}{Q(Z, X)}$ with $P, Q \in \mathbb{C}[Z, X]$ coprime, we have $P\neq 0$ and $Q \neq 0$ everywhere in $W_1$.
Furthermore, $P(Z, X)=P_1(Z)P_2(X)$ and $Q(Z,X)=Q_1(Z)Q_2(X)$ for some polynomials $P_1, Q_1 \in \mathbb{C}[Z]$ and $P_2, Q_2 \in \mathbb{C}[X],$
with $\{P_1=0\}, \{Q_1=0\} \subseteq \mathcal{V}_1$ and $\{P_2=0\}, \{Q_2=0\} \subseteq \ol{\mathcal{V}_1}.$
\end{lemma}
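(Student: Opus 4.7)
The plan is to exploit the fact that $\mathcal{F}=(F,\overline{F})$ is a biholomorphism from $W_1$ onto $W_2$ that restricts to a biholomorphism of the smooth parts $\mathrm{Reg}(\mathcal{M}_1)\cap W_1\to\mathrm{Reg}(\mathcal{M}_2)\cap W_2$. Since $r_2$ is a local defining function for $\mathcal{M}_2$ with nonvanishing differential at every point of $\mathrm{Reg}(\mathcal{M}_2)$, the pullback $r_2\circ\mathcal{F}$ is holomorphic on $W_1$ and serves as a local defining function for $\mathcal{M}_1$ at every point of $\mathrm{Reg}(\mathcal{M}_1)\cap W_1$. Comparing with the polynomial $r_1$, which is irreducible by Proposition \ref{prpnrir} and whose differential is nonzero along $\mathrm{Reg}(\mathcal{M}_1)$, I would conclude that in a neighborhood of any such point the ratio $R=r_2\circ\mathcal{F}/r_1$ equals a holomorphic, nowhere vanishing factor. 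Off $\mathcal{M}_1$ the ratio is manifestly holomorphic and nonvanishing, so this already gives that $R$ is holomorphic and nowhere zero on the open set $W_1\setminus\mathrm{Sing}(\mathcal{M}_1)$.

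The main technical step I anticipate is promoting this to all of $W_1$. Because $\mathcal{M}_1$ is an irreducible hypersurface in $\mathbb{C}^{2(n+1)}$, its singular locus $\mathrm{Sing}(\mathcal{M}_1)$ has complex codimension at least two, and the Hartogs--Riemann extension theorem therefore allows me to extend both $R$ and its reciprocal $1/R$ holomorphically across $\mathrm{Sing}(\mathcal{M}_1)\cap W_1$ to functions $\tilde{R}$ and $\tilde{S}$ on $W_1$. Since the identity $\tilde{R}\cdot\tilde{S}=1$ holds on the dense open set $W_1\setminus\mathrm{Sing}(\mathcal{M}_1)$, it persists throughout $W_1$ by the identity principle, and hence $R$ is holomorphic and nowhere vanishing on all of $W_1$.

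To obtain the factorization, I would write $R=P/Q$ in lowest terms with $P,Q\in\mathbb{C}[Z,X]$ coprime. Holomorphy of $R$ in $W_1$ forces the pole divisor $\{Q=0\}$ to miss $W_1$, so $\{Q=0\}\subseteq(\mathcal{V}_1\times\mathbb{C}^{n+1})\cup(\mathbb{C}^{n+1}\times\overline{\mathcal{V}_1})$; the nonvanishing of $R$ on $W_1$ gives the same containment for $\{P=0\}$. A direct application of Lemma \ref{lmhzx} with $\mathcal{V}=\mathcal{V}_1$ and $\hat{\mathcal{V}}=\overline{\mathcal{V}_1}$ then delivers the decompositions $P(Z,X)=P_1(Z)P_2(X)$ and $Q(Z,X)=Q_1(Z)Q_2(X)$ with $\{P_1=0\},\{Q_1=0\}\subseteq\mathcal{V}_1$ and $\{P_2=0\},\{Q_2=0\}\subseteq\overline{\mathcal{V}_1}$, completing the proof.
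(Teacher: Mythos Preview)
Your argument is correct and follows essentially the same route as the paper's proof: use that $\mathcal{F}$ is a biholomorphism carrying $\mathrm{Reg}(\mathcal{M}_1)\cap W_1$ onto $\mathrm{Reg}(\mathcal{M}_2)\cap W_2$ to see that $R$ is holomorphic and nonvanishing on $W_1\setminus\mathrm{Sing}(\mathcal{M}_1)$, then cross the codimension-two singular locus, and finally invoke Lemma~\ref{lmhzx}. The only cosmetic difference is in the crossing step: the paper observes directly that the polynomial zero sets $\{P=0\}$ and $\{Q=0\}$, being hypersurfaces if nonempty, cannot be contained in the codimension-two set $\mathrm{Sing}(\mathcal{M}_1)\cap W_1$, whereas you extend $R$ and $1/R$ via Hartogs; both are standard and equivalent.
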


\begin{proof}
Recall $\mathcal{F}=(F, \Ol{F})$ is a biholomorphism from $W_1$ to $W_2$; it also maps $\mathcal{M}_1 \cap W_1$ and $\mathrm{Reg}(\mathcal{M}_1) \cap W_1$ onto $\mathcal{M}_2 \cap W_2$ and $\mathrm{Reg}(\mathcal{M}_2) \cap W_2,$ respectively.
Fix any
$(Z_0, X_0) \in W_1 \setminus \mathcal{M}_1$, which then satisfies $r_1(Z_0, X_0) \neq 0$. We must have  $\mathcal{F}(Z_0, X_0) \not \in \mathcal{M}_2 \cap W_2,$ i.e., $r_2(F(Z_0), \Ol{F}(X_0)) \neq 0.$ This yields that $R(Z, X)$ is holomorphic and nonzero on $W_1 \setminus \mathcal{M}_1.$
Next as $r_i(Z, X)$ is a defining function of $\mathrm{Reg}(\mathcal{M}_i), 1 \leq i \leq 2$, and $r_2(F(Z), \Ol{F}(X))$ vanishes on $W_1 \cap \mathcal{M}_1$, it follows that
$R(Z, X)$ is holomorphic in a neighborhood of $\mathrm{Reg}(\mathcal{M}_1)\cap W_1$. As $\mathcal{F}$ is biholomorphic on $W_1$, we check the gradient of both sides of  $r_2(F(Z), \Ol{F}(X))=r_1(Z, X) R(Z, X)$ to see $R(Z, X) \neq 0$ on $\mathrm{Reg}(\mathcal{M}_1) \cap W_1$.
Hence $R(Z, X)$ is holomorphic and nonzero everywhere in $W_1 \setminus \mathrm{Sing}(\mathcal{M}_1)$. Consequently, if $R(Z, X)=\frac{P(Z, X)}{Q(Z, X)},$ where $P, Q \in \mathbb{C}[Z, X]$ are coprime, then  $P\neq 0$ and $Q \neq 0$ everywhere in $W_1 \setminus \mathrm{Sing}(\mathcal{M}_1)$. But $\mathrm{Sing}(\mathcal{M}_1)$ has complex codimension at least $2$ in $\mathbb{C}^{n+1} \times \mathbb{C}^{n+1}$. Therefore $P\neq 0$ and $Q \neq 0$ everywhere in $W_1$. That is, $\{P=0\}, \{Q=0\} \subseteq (\mathcal{V}_1 \times \mathbb{C}^{n+1}) \cup (\mathbb{C}^{n+1} \times \Ol{\mathcal{V}_1}).$ Then the last assertion of the lemma follows from Lemma \ref{lmhzx}.
\end{proof}

By Lemma \ref{lmrzx}, $R(Z, X)=R_1(Z)R_2(X)$ for some rational functions $R_1$ and $R_2.$ Here $R_1(Z)$ is holomorphic and nowhere zero in $\mathbb{C}^{n+1} \setminus \mathcal{V}_1$, and $R_2(X)$ is holomorphic and nowhere zero in $\mathbb{C}^{n+1} \setminus \Ol{\mathcal{V}_1}$. By the definition of $R$,  we have $r_2(F(Z), \Ol{F}(X))=r_1(Z, X)R_1(Z)R_2(X).$ Setting $X=\Ol{Z} $ in the equation, we get $r_2(F(Z), \Ol{F(Z)})=r_1(Z, \Ol{Z})R_1(Z)R_2(\Ol{Z})$ for $Z$ in $\mathbb{C}^{n+1} \setminus \mathcal{V}_1.$ Consequently, $R_1(Z)R_2(\Ol{Z})$ is real valued in $\mathbb{C}^{n+1} \setminus \mathcal{V}_1$. This yields, by elementary complex analysis, $R_2(\Ol{Z})=c \Ol{R_1(Z)}$ for some nonzero real number $c$. Furthermore, since $F$ maps $U \cap D_1$ (where $r_1(Z, \ol{Z})>0$) to $D_2$ (where $r_2(Z, \ol{Z})>0$), we must have $c >0.$ Writing $R_3(Z)=\sqrt{c} R_1(Z),$ we have $R_3(Z) \neq 0$ in $\mathbb{C}^{n+1} \setminus \mathcal{V}_1$, and
\begin{equation}\label{eqnr2f}
r_2(F(Z), \Ol{F(Z)})=r_1(Z, \Ol{Z}) |R_3(Z)|^2~\text{on}~\mathbb{C}^{n+1} \setminus \mathcal{V}_1.
\end{equation}

Let $\om_{D_i}=-\sqrt{-1} \partial \overline{\partial} \log r_i(Z, \Ol{Z}), 1 \leq i \leq 2,$ be the canonical metrics of $D_i$ as defined in $\S$2.3. On $U \cap D_1 \subseteq \mathbb{C}^{n+1} \setminus \mathcal{V}_1$, both sides of (\ref{eqnr2f}) are positive. We take the logarithm of (\ref{eqnr2f}), and then apply $-\partial \Ol{\partial}$ to see $F$ is isometric: $F^*(\omega_{D_2})=\omega_{D_1}$ in $U \cap D_1$. Note $\Om_i$ is simply connected, and so is $D_i$ for $1 \leq i \leq 2.$ Applying the classical extension result in Riemannian Geometry (cf. Proposition 11.3 and 11.4 \cite{He}), we conclude $F$ extends to a global holomorphic isometric map, still denoted by $F$,  from $(D_1, \omega_{D_1})$ to $(D_2, \omega_{D_2})$. We then apply the same argument in the above to $F^{-1}$ (which is initially defined on $F(U)$) to see $F^{-1}$ extends to a global holomorphic isometric map, denoted by $G$,  from  $(D_2, \omega_{D_2})$ to $(D_1, \omega_{D_1})$. It is clear $F \circ G$ equals the identity map in $D_2$ and $G \circ F$ equals the identity map in $D_1.$ Hence $F$ is a biholomorphic isometry from $(D_1, \omega_{D_1})$ to $(D_2, \omega_{D_2})$ (this in particular yields $D_1 \cap \mathcal{V}_1=\emptyset$). This proves part (1) of Theorem \ref{T1}.

To prove part (2) of Theorem \ref{T1}, we first consider the case $\mathrm{rank}(\Om_1)=\mathrm{rank}(\Om_2)=1$. In this case,  both $\Om_1$ and $\Om_2$ are the unit ball $\mathbb{B}^n.$ Since $\rho_1$ and $\rho_2$ both equal the generic norm of $\mathbb{B}^n,$  we have $(\Om_1, \omega_{\Om_1})$ and $(\Om_2, \omega_{\Om_2})$ are holomorphically isometric. In the remaining cases, it holds that $\mathrm{rank}(\Omega_1) \geq 2$ or $\mathrm{rank}(\Omega_2) \geq 2.$ The conclusion in part (2) is then a consequence of part (3) and Remark \ref{rmkform}, which we will prove in the following.
For that, we first establish the next lemma. As before, $Z=(z, \xi)=(z_1, \cdots, z_n, \xi)$ denotes the coordinates of $\mathbb{C}^{n+1}.$

\begin{lemma}\label{lmf0}
Assume $\mathrm{rank}(\Omega_1) \geq 2$ or $\mathrm{rank}(\Omega_2) \geq 2.$ It holds that $F$ maps $\Omega_1 \times \{0\}$  to $\Omega_2 \times \{0\}.$ Likewise, $F^{-1}$ maps $\Omega_2 \times \{0\}$  to $\Omega_1 \times \{0\}$ (in particular the restriction $F: \Omega_1 \times \{0\} \rightarrow \Omega_2 \times \{0\}$ is biholomorphic). Consequently, writing $F=(f, f_{n+1})=(f_1, \cdots, f_n, f_{n+1}),$ we have $f_{n+1}(z, 0) \equiv 0$ for all $z \in \Om_1,$ and $f(z, 0)$ is a biholomorphism from $\Om_1$ and $\Om_2$.
\end{lemma}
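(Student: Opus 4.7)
The plan is to adapt the Wong--Rosay strategy used in the Claim within the proof of Proposition \ref{prpnauto}, now for a biholomorphism between two different disk bundles. I argue by contradiction: assume $F(\Omega_1 \times \{0\}) \not\subseteq \Omega_2 \times \{0\}$. Writing $F = (f, f_{n+1})$, this means $g(z) := f_{n+1}(z,0)$ does not vanish identically on $\Omega_1$.

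First I would produce a sequence $q_i = (z_i, 0) \in \Omega_1 \times \{0\}$ such that $F(q_i)$ converges to a point in $\Sigma_2$. Note that $g$ is holomorphic and bounded on $\Omega_1$ since $|g(z)|^2 < \rho_2(f(z,0),\overline{f(z,0)}) \leq 1$. If every cluster value of $g$ at $\partial \Omega_1$ were zero, then for any $\varepsilon>0$ the set $\{|g| \geq \varepsilon\}$ would be compactly contained in $\Omega_1$, and the maximum modulus principle applied to the relatively compact open set $\{|g|>\varepsilon\}$ would force it to be empty; letting $\varepsilon \to 0$ would yield $g \equiv 0$, contradicting our assumption. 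So there exist $z_i \to \partial \Omega_1$ with $F(z_i,0) \to q_\infty = (w_\infty, \eta_\infty)$ and $\eta_\infty \neq 0$. Since $F$ is a biholomorphism of $D_1$ onto $D_2$ and $(z_i, 0) \to \partial D_1$, the limit $q_\infty$ must lie in $\partial D_2$; combined with $\eta_\infty \neq 0$, this places $q_\infty$ in $\Sigma_2$, a strongly pseudoconvex boundary point.

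Next I would apply Wong--Rosay. Using the transitivity of $\mathcal{G}_1 \subseteq \mathrm{Aut}(D_1)$ on $\Omega_1 \times \{0\}$ (Proposition \ref{prpnspauto} and the subsequent remark), choose $\Phi_i \in \mathcal{G}_1$ with $\Phi_i(0) = q_i$. The conjugates $\Psi_i := F \circ \Phi_i \circ F^{-1}$ lie in $\mathrm{Aut}(D_2)$ and satisfy $\Psi_i(F(0)) = F(q_i) \to q_\infty \in \Sigma_2$. Hence $\mathrm{Aut}(D_2)$ admits a strongly pseudoconvex boundary orbit accumulation point. By the Wong--Rosay theorem, $D_2$ is biholomorphic to the unit ball $\mathbb{B}^{n+1}$; through $F$ the same holds for $D_1$. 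As in the proof of Proposition \ref{prpnauto}, this forces both $\Omega_1$ and $\Omega_2$ to have rank one, contradicting our hypothesis.

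Therefore $F(\Omega_1 \times \{0\}) \subseteq \Omega_2 \times \{0\}$. Applying the same argument to $F^{-1}$ (which inherits the rank hypothesis) yields the reverse inclusion, and consequently $F$ restricts to a biholomorphism $\Omega_1 \times \{0\} \to \Omega_2 \times \{0\}$; the statements on $f_{n+1}(z,0)$ and $f(z,0)$ then follow immediately. The most delicate point, I expect, is the cluster-value-to-maximum-modulus step, but the uniform bound $|g|<1$ together with the hypothesized vanishing of every cluster value at $\partial \Omega_1$ makes the standard compactness argument go through without difficulty.
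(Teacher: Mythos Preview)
Your argument is correct, and it follows a genuinely different route from the paper's own proof.

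The paper exploits the rationality of $F$ established in \S3.1: since $F$ is rational and biholomorphic on $D_1$, the denominator and the Jacobian determinant are nonvanishing polynomials on $D_1$, so $F$ extends to a local biholomorphism across a generic smooth piece $\hat A=A\times\{0\}$ of $\partial D_1\setminus\Sigma_1$ (such a piece exists by Lemma~\ref{lmstom} when $\mathrm{rank}(\Omega_1)\geq 2$). A local biholomorphism preserves the Levi form, so it cannot send the weakly pseudoconvex $\hat A$ into the strongly pseudoconvex $\Sigma_2$; hence $F(\hat A)\subseteq\partial\Omega_2\times\{0\}$, and the identity theorem forces $f_{n+1}(z,0)\equiv 0$. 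The case $\mathrm{rank}(\Omega_2)\geq 2$ is handled by applying the same reasoning to $F^{-1}$.

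Your approach, by contrast, makes no use of rationality or of Lemma~\ref{lmstom}. It needs only that $F:D_1\to D_2$ is a biholomorphism and that $\mathcal G_1$ acts transitively on $\Omega_1\times\{0\}$. The maximum-modulus step producing boundary cluster values with $\eta_\infty\neq 0$ is clean, and conjugating $\Phi_i$ through $F$ to obtain automorphisms of $D_2$ with a strongly pseudoconvex orbit accumulation point is exactly the right way to trigger Wong--Rosay. Both cases of the rank hypothesis are then disposed of simultaneously, since $D_1\cong D_2\cong\mathbb B^{n+1}$ contradicts $\mathrm{rank}(\Omega_i)\geq 2$ for either $i$ (the same fact invoked in the Claim of Proposition~\ref{prpnauto}). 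What you gain is independence from the rationality machinery; what the paper's route gains is that it stays closer to elementary local invariants (Levi form) and avoids invoking Wong--Rosay a second time.
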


{\bf Proof of Lemma \ref{lmf0}.} First assume $\mathrm{rank}(\Omega_1) \geq 2$.  Write the rational map $F(Z, X)=\frac{P(Z, X)}{Q(Z, X)}$, where $Q$ and the components of $P$ are in $\mathbb{C}[Z, X]$, and $\frac{P}{Q}$ is reduced to the lowest term. Write 
$JF$ for the determinant of the complex Jacobian matrix of $F$ (which also rational). Then $Q \neq 0, JF \neq 0$ everywhere in $D_1$.  In particular, they are everywhere nonzero on $\Omega_1 \times \{0\}.$ Recall by Lemma \ref{lmstom}, there exists an open piece $A$ of the boundary of $\Om_1$ such that every point in $\hat{A}:=A \times \{0\}$ is a weakly pseudoconvex (smooth) boundary point of $D_1.$ By the uniqueness of holomorphic functions, shrinking $A$ if needed, we can assume $F$ extends holomorphically across $\hat{A}$, with $Q \neq 0$ and $JF \neq 0$ on $\hat{A}.$ Consequently, by further shrinking $A$, the map $F$ is biholomorphic in some neighborhood $O \subseteq \mathbb{C}^{n+1}$ of $\hat{A}$. Then $F$ cannot map $\hat{A}$ to the strongly pseudoconvex boundary $\Sigma_2,$ and we must have $F(\hat{A}) \subseteq \p \Omega_2 \times \{0\}.$ This implies $F$ maps $\Omega_1 \times \{0\}$  to $\Omega_2 \times \{0\}.$ Since $F$ is biholomorphic in $D_1$, it is clear that $F^{-1}$ must map $\Omega_2 \times \{0\}$  to $\Omega_1 \times \{0\}.$ This finishes the proof of this case.

In the case $\mathrm{rank}(\Omega_2) \geq 2,$ we apply the same argument as above to $F^{-1}$ to obtain the assertion.  This finishes the proof. \qed

\smallskip

%Next by part (1), we know (\ref{eqnr2f}) holds on $D_1$. 
We continue to prove Theorem \ref{T1}. Write $g(z)=f(z, 0)$ and $h(z)=R_3(z, 0)$. Then by the above,  $g$ is a biholomorphism from $\Om_1$ to $\Om_2,$ and $h$ is an everywhere nonzero holomorphic function on $\Om_1$.  Recall $r_i(Z, \Ol{Z})=\rho_i(z,\Ol{z})-|\xi|^2, 1 \leq i \leq 2.$  We let $\xi=0$ in (\ref{eqnr2f}) and use Lemma \ref{lmf0} to obtain
$$\rho_2(g(z), \Ol{g(z)})=\rho_1(z,\Ol{z}) |h(z)|^2, \quad z \in \Om_1.$$
Next set
\begin{equation}\label{eqngz}
G(Z)=(g(z), \xi h(z)).
\end{equation}
It is clear that $G$ is a norm-preserving holomorphic isomorphism between the line bundles  $L_1=\Omega_1 \times \mathbb{C}$  and $L_2=\Omega_2 \times \mathbb{C}$ (with respect to the Hermitian metrics $h_1=\rho_1^{-1}$ and $h_2=\rho_2^{-1}$). In particular, $G$ gives a biholomorphism from $D_1$ to $D_2$. Moreover, the inverse map of $G$ is $G^{-1}(z, \xi)=(g^{-1}(z), \frac{\xi}{h(g^{-1}(z))})$.
We finally consider the map $H=G^{-1} \circ F: D_1 \rightarrow D_1.$
By part (1) of Theorem \ref{T1} and what we just discussed, $H$ is an automorphism of $D_1$. Then by part (2) of Proposition \ref{prpnauto}, $H$ extends to a norm-preserving holomorphic self-isomorphism of $(L_1, h_1)$.
Consequently, $F=G\circ H$ is a norm-preserving holomorphic isomorphism between the line bundles $(L_1, h_1)$ and $(L_2,h_2)$. This establishes part (3) of Theorem \ref{T1}.
Furthermore, from (\ref{eqngz}) and the form of $H$ (cf. part (1) of Proposition \ref{prpnauto}), we see $F$ take the form as in Remark \ref{rmkform}: 
$F(z, \xi)=(\psi(z),\xi \phi(z))$, where $\psi$ and $\phi$ are as described in Remark \ref{rmkform} and satisfy the equation $\rho_2(\psi(z), \Ol{\psi(z)})=|\phi(z)|^2 \rho_1(z, \Ol{z}).$ To see $\psi$ is an isometry from $(\Omega_1, \omega_{\Omega_1})$ to $(\Omega_2, \omega_{\Omega_2})$, we take the logarithm of this equation, and then apply the $-\partial \Ol{\partial}$ operator.

%We then take the logarithm of the above equation, and then apply $\partial \Ol{\partial}$ with respect to the $z-$variables to see $g(z)$ is isometric: $g^*(\omega_{\Om_2})=\omega_{\Om_1}$ on $\Om_1$. 

\section{Proof of Corollaries}

It remains to prove the corollaries presented in $\S$1. We start with Corollary \ref{corpdk} and \ref{corglt}.

{\bf Proof of Corollary \ref{corpdk}.} First we note $\rho_1$ and $\rho_2$ are  generic norms of $\Om_1$ and $\Om_2$ respectively, as at least one $p_i$ and at least one $q_i$ equal $1$. Then applying Theorem \ref{T1},  we see $(\Delta^n, \om_1=-\sqrt{-1} \partial \Ol{\partial} \log \rho_1)$ and $(\Delta^n, \om_2=-\sqrt{-1} \partial \Ol{\partial} \log \rho_2)$ are holomorphically isometric. This in particular implies that $(\Delta^n, \om_1)$ and $(\Delta^n, \om_2)$ have the same set of Ricci eigenvalues. Here the latter refers to the eigenvalues of the Ricci tensor $\mathrm{Ric}_i=-\sqrt{-1} \partial \Ol{\partial}\log \om_i^n$ with respect to $\om_i$ (see \cite{EXX2} for more details). On the other hand, it is routine to verify that the Ricci eigenvalues of $(\Delta^n, \om_1)$ and $(\Delta^n, \om_2)$ are given by $(-\frac{2}{p_1}, \cdots, -\frac{2}{p_n})$ and $(-\frac{2}{q_1}, \cdots, -\frac{2}{q_n})$ respectively. Hence $(p_1, \cdots, p_n)$ and $(q_1, \cdots, q_n)$ are equal up to a permutation.
The last assertion follows from part (3) of Theorem \ref{T1}. \qed

\smallskip

%We next prove Corollary \ref{corglt} and \ref{corac}. 

{\bf Proof of Corollary \ref{corglt}.} Write $\Sigma_i=\Sigma(\Om_i, \rho_i)=\{(z, \xi) \in \Om_i \times \mathbb{C}: |\xi|^2=\rho_i(z, \overline{z}) \}$ and define $\Phi_i(z, \xi)=(z, \xi^{\frac{1}{\lambda}})$ for each $1 \leq i \leq 2$. Here by shrinking $M$, we can choose  branches of $\xi^{\frac{1}{\lambda}}$ so that $\Phi_1$ and $\Phi_2$ are biholomorphic maps in a small neighborhood $U_1$ of $M$ and a small neighborhood $U_2$ of $G(M)$. Note $\Phi_i$ maps $U_i \cap M_i$ to $\Sigma_i.$ Write $M'=\Phi_1(M),$ which is an open connected piece of $\Sigma_1,$ and define $F=\Phi_2 \circ G \circ \Phi_1^{-1}$ on $M'$. It is clear that $F$ is a nonconstant continuous CR map from $M'$ to $\Sigma_2.$ Then by part (2) of Theorem \ref{T1}, $\Om_1$ and $\Om_2$ are biholomorphic. Furthermore, by part (3) of Theorem \ref{T1} and Remark \ref{rmkform}, $F$ extends to a norm-preserving holomorphic isomorphism between the line bundles  $(\Omega_1 \times \mathbb{C}, \rho_1^{-1})$  and $(\Omega_2 \times \mathbb{C}, \rho_2^{-1})$. Moreover,  
$F(z, \xi)=(\psi(z),\xi \phi(z))$, where $\psi$ is a biholomorphic map from $\Om_1$ to $\Om_2$, and $\phi$ is an everywhere nonzero holomorphic function on $\Om_1$ satisfying $\rho_2(\psi(z), \Ol{\psi(z)})=|\phi(z)|^2 \rho_1(z, \Ol{z}).$ Consequently, $G$ extends to a biholomorphic map $H=\Phi_2^{-1} \circ F \circ \Phi_1$ in a small neighborhood of $M$. By the form of $\Phi_1, \Phi_2$ and $F$, we see
$H=(\psi(z),(\xi^{\frac{1}{\lambda}}\phi(z))^{\lambda})$ for some branches of $(\cdot)^{\frac{1}{\lambda}}$ and $(\cdot)^{\lambda}.$ Since $\phi$ is nowhere zero in the simply connected domain $\Om_1$, there is a well-defined branch of $\lambda-$power of $\phi$ in $\Om_1$, denoted by $(\phi(z))^{\lambda}.$ Note $(\xi^{\frac{1}{\lambda}}\phi(z))^{\lambda}=e^{i\theta}\xi(\phi(z))^{\lambda}$ for some real number $\theta.$
Consequently, $H=(\psi(z),e^{i\theta}\xi(\phi(z))^{\lambda})$ extends to a holomorphic map on $\Om_1 \times \mathbb{C}.$ Moreover, it is clear that $H$ is a  norm-preserving holomorphic isomorphism between the Hermitian line bundles $(\Om_1 \times \mathbb{C}, h_1)$ and $(\Om_2 \times \mathbb{C}, h_2)$. This finishes the proof. \qed

\smallskip

We next apply Corollary \ref{corglt} to prove Corollary \ref{corac}.

\smallskip

{\bf Proof of Corollary \ref{corac}.} To prove part (1), we note that $M_i, 1 \leq i \leq 2,$ is given by, up to a constant scaling of $\xi,$ $\{(z, \xi) \in \Om_i \times \mathbb{C}: |\xi|^2=\big( \rho_i(z, \overline{z}) \big)^{g_i} \}$, where $g_i$ is the genus of $\Om_i.$ Moreover, $M_i$ is locally CR diffeomorphic  to $\Sigma_i=\{(z, \xi) \in \Om_i \times \mathbb{C}: |\xi|^2=\rho_i(z, \overline{z}) \}.$ Consequently,  if there exists a CR diffeomorphism between some open pieces of $M_1$ and $M_2$, then there is also a CR diffeomorphism between some open pieces of $\Sigma_1$ and $\Sigma_2.$ By part (2) of Theorem \ref{T1}, $\Omega_1$ and $\Omega_2$ are biholomorphic (and thus $g_1=g_2$). This proves part (1) as the ``only if" part is trivial.
Part (2) follows from Corollary \ref{corglt} by setting $\lambda=g_1=g_2$. \qed %(by part (1), $g_1$ and $g_2$ must be equal). \qed

\smallskip

We then prove Corollary \ref{corpr}.

\smallskip

{\bf Proof of Corollary \ref{corpr}.} %Let $\Om$ be an irreducible bounded symmetric domain (which is convex and contains $0$) with generic norm $N(z, \ol{z}), z \in \Om.$
%Fixing $z_0 \in \Om.$ By Proposition 2.1 in \cite{TW}, the function $N(rz_0, r\ol{z_0})$ is decreasing in $r \in [0, 1].$ As a consequence of this fact, 
Fix $z^* \in \Om_1.$ By Proposition 2.1 in \cite{TW}, the function $\rho_1(r z^*, r \ol{z^*})$ is decreasing in $r\in [0, 1].$ As a result, if $(z^*, \xi^*) \in \Om_1 \times \mathbb{C}$ is in $D_1$, i.e., $|\xi^*|^2 < \big(\rho_1(z^*, \overline{z^*})\big)^{\lambda_1},$ then for $r\in [0, 1]$, we have
$r^2 |\xi^*|^2 \leq |\xi^*|^2 < \big(\rho_1(z^*, \overline{z^*})\big)^{\lambda_1} \leq  \big(\rho_1(rz^*, r\overline{z^*})\big)^{\lambda_1}.$
Hence $(rz^*, r\xi^*) \in D_1,$ and thus $rD_1:=\{(rz, r\xi): (z, \xi) \in D_1\} \subseteq D_1$ for all $r\in [0, 1].$ Then, as $D_1$ is complete circular, by Lemma 1.1.1 of \cite{M2}, for any compact $E \subseteq D_1,$ there is a neighborhood $V$ of $\ol{D_1}$ such that the Bergman kernel $K(z, \xi, \ol{w}, \ol{\eta})$ of $D_1$ extends to be holomorphic on $V$ as a function of $(z, \xi)$ for each $(w, \eta) \in E.$ Now as $D_1$ and $D_2$ are bounded and circular,  by Theorem 2 of \cite{B}, $G$ extends holomorphically to a neighborhood of $\ol{D_1}.$

Write $M_i=\{(z, \xi) \in \Om_i \times \mathbb{C}: |\xi|^2=\big( \rho_i(z, \overline{z}) \big)^{\lambda_i} \}$ for $1 \leq i \leq 2.$ By the properness of $G$ and the strong pseudoconvexity of $M_1$ and $M_2$,  the extension of $G$ gives a local CR diffeomorphism from $M_1$ and $M_2.$ Since $M_i$ is locally CR equivalent to $\Sigma_i=\{(z, \xi) \in \Om_i \times \mathbb{C}: |\xi|^2=\rho_i(z, \overline{z}) \}$, we have $\Sigma_1$ and $\Sigma_2$ are locally CR diffeomorphic. By Theorem \ref{T1}, $\Omega_1$ and $\Omega_2$ are biholomorphic.

Fix $p \in M_1$. Pick small simply connected neighborhoods $U_1$ and $U_2$ of $p$ and $F(p) \in M_2$, respectively, such that $U_1\cap M_1$ is connected, $G(U_1) \subseteq U_2$, and there are well-defined branches of $\xi^{\frac{1}{\lambda_i}}$ in $U_i$ for $1 \leq i \leq 2.$ Moreover,  $\Phi_i(z, \xi):=(z, \xi^{\frac{1}{\lambda_i}}), 1 \leq i \leq 2,$ is biholomorphic on $U_i$. Note $\Phi_i$ maps $U_i \cap M_i$ to $\Sigma_i.$ Write $M'=\Phi_1(U_1\cap M_1)$, which is an open connected piece of $\Sigma_1,$ and define $F=\Phi_2 \circ G \circ \Phi_1^{-1}$ on $M'$. It is clear that $F$ is a nonconstant continuous CR map from $M'$ to $\Sigma_2.$ By part (3) of Theorem \ref{T1} and Remark \ref{rmkform}, $F$ extends to a norm-preserving holomorphic isomorphism between the line bundles  $(\Omega_1 \times \mathbb{C}, \rho_1^{-1})$  and $(\Omega_2 \times \mathbb{C}, \rho_2^{-1})$. Moreover,  
$F(z, \xi)=(\psi(z),\xi \phi(z))$, where $\psi$ and $\phi$ are as described in Remark \ref{rmkform}.  Note $G=\Phi_2^{-1} \circ F \circ \Phi_1$ in $U_1$.
By the form of $\Phi_1, \Phi_2$ and $F$, we see
$G=(\psi(z),(\xi^{\frac{1}{\lambda_1}}\phi(z))^{\lambda_2})$ in $U_1$ for some branches of $(\cdot)^{\frac{1}{\lambda_1}}$ and $(\cdot)^{\lambda_2}.$ Choose branches of $\xi^{\frac{\lambda_2}{\lambda_1}}$ and $\big(\phi(z)\big)^{\lambda_2}$ in $U_1.$ Note
$(\xi^{\frac{1}{\lambda_1}}\phi(z))^{\lambda_2}=e^{i\theta} \xi^{\frac{\lambda_2}{\lambda_1}} \big(\phi(z)\big)^{\lambda_2}$ in $U_1$ for some real number $\theta.$ But by assumption, $G=(\psi(z), e^{i\theta} \xi^{\frac{\lambda_2}{\lambda_1}} \big(\phi(z)\big)^{\lambda_2})$ extends to a holomorphic map in $D_1.$
This can occur if and only if $\lambda_2$ is an integer multiple of $\lambda_1.$ Then the conclusion of Corollary \ref{corpr} follows easily. \qed

\smallskip

We finally prove Theorem \ref{corcis}. 

\smallskip

{\bf Proof of Theorem \ref{corcis}.} Let $X$  be a compact Riemann surface of genus at least two, so that $X$ is covered by the 
unit disk and is equipped with the hyperbolic metric $g$. Let $(L, h)$ be its anti-canonical line bundle. Note $L$ is just the holomorphic tangent vector bundle of $X$, and $h$ equals the hyperbolic metric $g$. %Since $(X, g)$ locally is holomorphically isometric the the unit disk equipped with a hyperbolic metric, 
At any point $p$ of $X$, we can choose local coordinates $z$ of $p$ such that $z(p)=0$ and $g$ is given, up to a constant, by $(1-|z|^2)^{-2} dz \otimes d \ol{z}$ near $p$.
Fix any integer $n \geq 2$. We will construct a countable family of compact real analytic CR hypersurfaces of dimension $2n+1$ as desired in Theorem \ref{corcis}.

Let $Y$ be the product of $n$ copies of $X: Y=X \times \cdots  \times X.$ Denote by $\pi_i, 1 \leq i \leq n, $ the natural projection from $Y$ to the $i$th copy of $X$. Let $S$ be the set of $n-$tuples $(k_1, \cdots, k_{n}) \in \mathbb{Z}^{n}$ satisfying $1 =k_1 \leq k_2 \leq \cdots \leq k_{n}.$ For any $k=(k_1, \cdots, k_{n}) \in S,$ we define a Hermitian line bundle $(L_k, h_k)$ over $Y$ to be 
$(L_k, h_k):=\pi_1^*(L^{k_1}, h^{k_1}) \otimes \cdots \otimes \pi_n^*(L^{k_{n}}, h^{k_{n}}).$

Consider the circle bundle $C(k):=C(L_k, h_k)$ of the line bundle of $(L_k, h_k),$ which clearly has transverse symmetry. Moreover,
one can readily see that, at every $q \in C(k),$  $C(k)$ is locally CR equivalent to some open piece of the CR hypersurface in $\mathbb{C}^{n+1}$ given by  $|\xi|^2 \prod_{i=1}^{n} \frac{1}{(1-|z_i|^2)^{2k_i}}=1$, with all $z_i'$s close to $0$. Then via  the map $(z_1, \cdots, z_{n}, \xi) \rightarrow ( z_1, \cdots, z_{n}, \sqrt{\xi})$ for some appropriate branch of $\sqrt{\xi},$ we have $C(k)$ is further locally CR equivalent to some open piece of the following CR hypersurface in $\mathbb{C}^{n+1}:$
$$|\xi|^2=\prod_{i=1}^{n} (1-|z_i|^2)^{k_i},  |z_i|< 1~\text{for}~1 \leq i \leq n.$$
That is, $C(k)$ is locally CR equivalent to $\Sigma(k):=\Sigma(\Delta^{n}, \rho)$, where $\rho$ is the generic norm $\prod_{i=1}^{n} (1-|z_i|^2)^{k_i}$ of $\Delta^{n}$. It then follows from Proposition \ref{prpnflat} that $C(k)$ is obstruction flat and Bergman logarithmically flat. Moreover, $C(k)$ is locally homogeneous, as $\Sigma(k)$ is so. Furthermore, %the following statement hold.
%(a). Given any unit sphere $\mathbb{S}^{2N+1}, N \geq 2,$ by part (2) of Theorem \ref{T115}, the only smooth CR map from an open piece of $C(k)$ to $\mathbb{S}^{2N+1}$ is the constant map.
by Corollary \ref{corpdk}, with two choices of $k: \hat{k}=(\hat{k}_1, \cdots, \hat{k}_n)$ and $\wt{k}=(\wt{k}_1, \cdots, \wt{k}_n)$ in $S,$ there exists a nonconstant continuous CR map from an open piece of $\Sigma(\hat{k})$ to $\Sigma(\wt{k})$ (equivalently, $C(\hat{k})$ to $C(\wt{k})$), if and only if $\hat{k}=\wt{k}.$
Hence, by letting $k$ vary in $S,$ we obtain the desired countable family of CR hypersurfaces $C(k)$ in Theorem \ref{corcis}. \qed

%---------------------------------------------------------------

%---------------------------------------------------------------

\end{document}